\author{K\'aroly J. B\"or\"oczky\footnote{Alfr\'ed R\'enyi Institute of Mathematics, Realtanoda u. 13-15, H-1053 Budapest, Hungary, boroczky.karoly.j@renyi.hu}, 
Shibing Chen\footnote{School of Mathematical Sciences, University of Science and Technology of China, 230026 Hefei, China, chenshib@ustc.edu.cn}, 
Weiru Liu\footnote{School of Mathematics and Statistics, Central China Normal University,  
430079 Wuhan, China, lwr1997@ccnu.edu.cn}, 
Christos Saroglou\footnote{Department of Mathematics, University of Ioannina, Greece, csaroglou@uoi.gr}}
\title{Compactness of the $L_p$ dual Minkowski problem in $\R^3$}
\newcommand{\R}{\mathbb{R}}
\newcommand{\HH}{\mathcal{H}}
\newtheorem{lemma}{LEMMA}[section]
\newtheorem{theo}[lemma]{THEOREM}
\newtheorem{coro}[lemma]{COROLLARY}
\newtheorem{prop}[lemma]{PROPOSITION}
\newtheorem{problem}[lemma]{PROBLEM}
\begin{document}

\maketitle

\begin{abstract}
We prove the $C^0$ estimate for the $L_p$ $q$th dual Minkowski problem on $S^2$ under fairly general conditions; namely, when $p\in[0,1)$ and $q>2+p$, and the $L_p$ $q$th dual curvarture is bounded and bounded away from zero. We note that it is known that the analogous  $C^0$ estimate does not hold if  $p<-1$ and $q=3$. As a corollary of our $C^0$ estimate, we deduce the uniqueness of the solution of the near isotropic  $q$th $L_p$ dual Minkowski problem on $S^2$ if $q$ is close to $3$ and the $q$th $L_p$ dual curvature is H\"older close to be the constant one function.
\end{abstract}

\noindent {\bf MSC classes:} 35J96 (52A20)

\section{Introduction}
\label{secIntroduction}

The $C^0$ estimates are fundamental tools in order to obtain existence or uniqueness results in the theory of Monge-Amp\`ere equations. In this paper, we concentrate on the recently introduced $L_p$ dual Minkowski problem (cf. Huang, Lutwak, Yang, Zhang \cite{HLYZ16} and Lutwak, Yang, Zhang \cite{LYZ18}), and the corresponding Monge-Amp\`ere equation \eqref{Lp-dualMink-Monge-Ampere-intro}:
For a non-negative function $f\in L^1(S^{n-1})$, $q>0$ and $p\in\R$, the  Monge-Amp\`ere equation on $S^{n-1}$ corresponding to the $L_p$ $q$th dual Minkowski problem  is
\begin{equation}
\label{Lp-dualMink-Monge-Ampere-intro}
\left(\|\nabla u\|^2+u^2\right)^{\frac{q-n}2}\cdot u^{1-p}\det\left(\nabla^2 u+u\,I\right)=f,
\end{equation}
where $\nabla u$ and  $\nabla^2 u$ are the spherical gradient and the Hessian of the function $u$ on $S^{n-1}$ with respect to a moving orthonormal frame, and $I$ denotes the identity matrix of suitable size.

Let us introduce the notions and notations used in the Alexandrov version of the Monge-Amp\`ere equation \eqref{Lp-dualMink-Monge-Ampere-intro} (see also Section~\ref{sec-basics}). We write $\HH^k$ to denote the $k$-dimensional Hausdorff measure normalized in a way such that it coincides with the Lebesgue measure on $k$-dimensional affine subspaces, and to simplify formulas, we also use the notation
$|X|=\mathcal{H}^n(X)$ for a measurable $X\subset\R^n$. We note that for any measure in this paper, the measure of the emptyset is $0$.
  For a compact convex set $K\subset\R^n$, 
 the support function 
$h_K(w)=\max_{x\in K}\langle w,x\rangle$ for $w\in \R^n$ is convex and $1$-homogeneous, and the surface area measure on $S^{n-1}$ is denoted by $S_K$. We note that if ${\rm dim}\,K\leq n-2$, then $S_K$ is the zero measure, and if $w\in S^{n-1}$ and $K\subset w^\bot$ is an $(n-1)$-dimensional compact convex set, then $S_K$ is concentrated on $\{w,-w\}$ in a way such that $S_K(\{w\})=S_K(\{-w\})=\mathcal{H}^{n-1}(K)$ (see Schneider \cite{Sch14}). 

We call a compact convex set in $\R^n$ with non-empty interior a convex body, and the family of convex bodies in $\R^n$ containing the origin $O$ is denoted by $\mathcal{K}^n_o$. For $K\in\mathcal{K}^n_o$, let $\partial'K$ denote the subset of the boundary  of $K$ such that there exists a unique exterior unit normal vector
$\nu_K(x)$ at any point $x\in \partial'K$.
It is well-known that $\HH^{n-1}(\partial K\setminus\partial'K)=0$ and $\partial'K$ is a Borel set  (see Schneider \cite{Sch14}).  The function $\nu_K:\partial'K\to S^{n-1}$ is the spherical Gauss map
that is continuous on $\partial'K$.
Now the surface area measure $S_K$ of $K\in\mathcal{K}^n_o$ is a Borel measure on $S^{n-1}$ 
satisfying that $S_K(\eta)=\HH^{n-1}(\nu_K^{-1}(\eta))$
 for any Borel set $\eta\subset S^{n-1}$. In particular, if $\partial K$ is $C^2_+$, then 
\begin{equation}
\label{SKMonge}
dS_K=\det(\nabla^2 u+u\,I)d\HH^{n-1}
\end{equation}
where $u=h_K|_{S^{n-1}}$.

For $q>0$ and $K\in \mathcal{K}^n_o$, to define the $q$th dual curvature measure $\widetilde{C}_{q,K}$ on $S^{n-1}$  initated by Huang, Lutwak, Yang, Zhang \cite{HLYZ16} (see also  B\"or\"oczky, Fodor \cite{BoF19} for the case when $O\in\partial K$), first we consider the radial function $\varrho_K$   defined by
$$
\varrho_K(w)=\max\{t\geq 0:\,tw\in K\}
$$
for $w\in S^{n-1}$. 
For $\omega\subset S^{n-1}$, let
$$
\alpha^*_K(\omega)=\{w\in S^{n-1}:\nu_K(\varrho_K(w)\cdot w)\cap\omega\neq \emptyset \},
$$
which is $\mathcal{H}^{n-1}$ measurable if $\omega$ is measurable.
Now for a $\mathcal{H}^{n-1}$ measurable $\omega\subset S^{n-1}$, its $q$th dual curvature measure is
$$
\widetilde{C}_{q,K}(\omega)=\int_{\alpha^*_K(\omega)}\varrho_K^{q}\,d\mathcal{H}^{n-1}.
$$
Since $\widetilde{C}_{q,K}(\{v\in S^{n-1}:\,h_K(v)=0\})=0$ for given $q>0$ and $K\in\mathcal{K}_o^n$ (cf. \eqref{Cq-nuo-zero}), it is possible to extend the definition of Lutwak, Yang, Zhang \cite{LYZ18} of the $L_p$ $q$th dual  curvature measure on $S^{n-1}$ as
$$
d\widetilde{C}_{p,q,K}= u^{-p}\,d\widetilde{C}_{q,K}
$$
for $u=h_K|_{S^{n-1}}$ and  $p\in\R$ (cf. B\"or\"oczky, Fodor \cite{BoF19}). It is  a finite measure on $S^{n-1}$ if $o\in{\rm int}\,K$ or $p\leq 0$ or $0<p<\min\{q,1\}$ (see B\"or\"oczky, Chen, Liu, Saroglou  \cite{BCLS} for the latter property). As special cases, if $q>0$ and $p\in\R$, then (see Lutwak, Yang, Zhang \cite{LYZ18})
\begin{align*}
\widetilde{C}_{0,q,K}=&\widetilde{C}_{q,K},\\
\widetilde{C}_{p,n,K}=&h_K^{1-p}\,dS_K=S_{p,K}\mbox{ \ is Lutwak's $L_p$ surface area measure,}\\
\widetilde{C}_{0,n,K}=&\widetilde{C}_{n,K}=S_{0,K}=nV_K
\end{align*}
where $V_K$ is Firey's (cf. \cite{Fir74}) cone volume measure that nicely intertwines with linear maps (cf. Section~\ref{sec-basics}).  In particular, Lutwak's $L_p$ Minkowski problem (the case $q=n$ of \eqref{Lp-dualMink-Monge-Ampere-intro})  is
\begin{equation}
\label{LpMonge}
 u^{1-p}\det\left(\nabla^2 u+u\,I\right)=f
\end{equation}
on $S^{n-1}$ where the case $p=0$ is Firey's (cf. \cite{Fir74}) Logarithmic Minkowski problem about the existence of the cone volume measure, and the case $p=1$ is the classical Minkowski problem solved by the work of
Minkowski \cite{Min03,Min11}, Aleksandrov \cite{Ale38a,Ale96},
 Nirenberg \cite{Nir53}, Cheng, Yau \cite{ChY76}, Pogorelov \cite{Pog78}, Caffarelli \cite{Caf90a,Caf90b} stretching throughout the 20th century. For some existence results about the $L_p$-Minkowski problem without evenness condition for $p\neq 1$, 
see for example Chou,  Wang \cite{ChW06}, Chen, Li, Zhu \cite{CLZ17,CLZ19}, 
Bianchi, B\"or\"oczky, Colesanti, Yang \cite{BBCY19} and
Guang, Li, Wang \cite{LGWa}.

For $q>0$ and $p\in\R$, a $K\in\mathcal{K}_o^n$ is a solution of the $L_p$ dual Minkowski problem \eqref{Lp-dualMink-Monge-Ampere-intro} in Alexandrov's sense (in the sense of measure) if
\begin{equation}
\label{Lp-dualMink-Monge-Ampere-measure-intro}
d\widetilde{C}_{p,q,K}=f\,d\mathcal{H}^{n-1}.
\end{equation}
Now $C^0$ estimates are cornerstones of existence and uniqueness results about the $L_p$ dual Minkowski problem or similar Monge-Amp\`ere equations, either in the case of the variational or the flow method, see for example 
Andrews \cite{And99}, B\"or\"oczky,  Lutwak, Yang, Zhang \cite{BLYZ13},  Brendle, Choi, Daskalopoulos \cite{BCD17}, Chou, Wang \cite{ChW06}, Li, Sheng, Wang \cite{LSW20}. Our main theme is to discuss the following problem.

\begin{problem}\label{problem-C0-estimate}
For $n\geq 2$, $p\in(-1,1)$, $q>n-1$ and $\lambda>1$, does there exist a constant $C=C(\lambda,n,p,q)>1$ such that if 
$$
\lambda^{-1}\mathcal{H}^{n-1}\leq \widetilde{C}_{p,q,K}\leq\lambda\mathcal{H}^{n-1}
$$
holds on $S^{n-1}$ for a convex body $K\in \mathcal{K}^n_o$,
then 
$$
\sup_{x\in S^{n-1}}h_K(x)\leq C\mbox{ and }|K|\geq C^{-1}.
$$
\end{problem}
\noindent{\bf Remark.} For any $p\in(-n,-1)$ and $n\geq 3$, Jian, Lu, Wang \cite{JLW15} (see (2.12), (2.13) and (2.14) in \cite{JLW15}) prove the existence of a $\lambda>1$ depending on $p$ and $n$ such that for any $\varepsilon>0$, there exists origin symmetric convex body $K_\varepsilon\subset\R^n$ with axial rotational symmetry and $C^\infty_+$ boundary satisfying that
$$
\frac1{\lambda}\cdot\mathcal{H}^{n-1}\leq S_{p,K_\varepsilon}\leq \lambda\cdot \mathcal{H}^{n-1},
$$
and  $|K_\varepsilon|<\varepsilon$. In particular, the lower bound $-1$ for $p$ in Problem~\ref{problem-C0-estimate} is necessary.

Our main result essentially solves Problem~\ref{problem-C0-estimate} if $n=3$. 

\begin{theo}
\label{n3-ppos--C0-estimate}
For $p\in[0,1)$, $q>2+p$ and $\lambda>1$,  there exists a constant $C=C(\lambda,p,q)>1$ such that if 
$$
\lambda^{-1}\mathcal{H}^{2}\leq \widetilde{C}_{p,q,K}\leq\lambda\mathcal{H}^{2}
$$
holds on $S^{2}$ for a convex body $K\in \mathcal{K}^3_o$,
then 
$$
\sup_{x\in S^{2}}h_K(x)\leq C\mbox{ and }|K|\geq C^{-1}.
$$
\end{theo}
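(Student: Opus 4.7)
The plan is to argue by contradiction. Suppose there is a sequence $K_j\in\mathcal{K}^3_o$ satisfying $\lambda^{-1}\mathcal{H}^{2}\le\widetilde{C}_{p,q,K_j}\le\lambda\mathcal{H}^{2}$ but with $\sup_{S^{2}}h_{K_j}\to\infty$ or $|K_j|\to 0$. The two conclusions are linked: unbounded $h_K$ with controlled volume forces $K$ to become thin in some direction, and vanishing volume forces collapse to a lower-dimensional set. We therefore aim to show that neither kind of degeneration is compatible with the measure sandwich, after extracting a Hausdorff-convergent subsequence. Note that $\widetilde{C}_{p,q,K}$ is not translation invariant in our range of parameters, so the position of $O$ inside $K_j$ is part of the data to be tracked along the sequence.

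For a priori diameter control, use the upper half of the sandwich, $d\widetilde{C}_{q,K}\le\lambda h_K^{p}\,d\mathcal{H}^{2}$, which integrates to
\begin{equation*}
\int_{S^{2}}\varrho_K^{q}\,d\mathcal{H}^{2}=\widetilde{C}_{q,K}(S^{2})\le 4\pi\lambda\bigl(\sup_{S^{2}}h_K\bigr)^{p}.
\end{equation*}
Analyze this against the John ellipsoid of $K$. In the "rod" regime (one dominating semi-axis $D\approx\sup h_K$ and perpendicular width $r$), the lateral surface yields $\int\varrho_K^{q}\gtrsim r^{2}D^{q-2}$, so $r^{2}D^{q-2-p}\lesssim 1$; the hypothesis $q>2+p$ forces $r\to 0$ as $D\to\infty$. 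In the "pancake" regime (two comparable semi-axes $a\approx\sup h_K$ and thickness $b$), a similar expansion gives $a^{q-1}b\lesssim a^{p}$. Either way, $\sup h_{K_j}\to\infty$ forces some semi-axis of the John ellipsoid to $0$, and hence $|K_j|\to 0$.

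The real work is thus to rule out $|K_j|\to 0$, which uses the lower half of the sandwich. The key observation is that when $K_j$ approaches a convex set of dimension at most $2$, the image of its Gauss map collapses onto a set of $\mathcal{H}^{2}$-measure zero: $\{\pm v\}$ for a segment in direction $v$, or a great circle for a flat body in a plane $H$. Consequently the dual curvature mass $\widetilde{C}_{q,K_j}$ becomes concentrated on an arbitrarily small neighbourhood of this exceptional set. Transferring this to $\widetilde{C}_{p,q,K_j}$ by carefully estimating the weight $h_K^{-p}$, we aim to show $\widetilde{C}_{p,q,K_j}(\omega)\to 0$ for fixed open $\omega\subset S^{2}$ disjoint from the limiting exceptional set, contradicting the lower bound $\lambda^{-1}\mathcal{H}^{2}(\omega)$. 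The condition $p<1$ is used to keep $h_K^{-p}$ locally integrable against the vanishing $\widetilde{C}_{q,K_j}$-mass in $\omega$, while $q>2+p$ controls the mass coming from the long "tips" of $K_j$.

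The main obstacle will be the positional degeneration in which $K_j$ stays three-dimensional but $O$ approaches $\partial K_j$ in the limit, producing an entire half of $S^{2}$ on which $h_{K_j}\to 0$. Here $h_K^{-p}$ is singular precisely where the dual measure is hard to control, and no naive passage to the limit is available. The plan is to decompose $S^{2}$ into caps around the two "tips" of $K_j$, a belt around the critical great circle $\{h_{K_j}=0\}$, and a remaining good region, and to estimate $\widetilde{C}_{p,q,K_j}$ on each piece using the appropriate side of the sandwich. The sharpness of the threshold $q>2+p$, reflected in the Jian--Lu--Wang counterexamples for $p<-1$, should emerge precisely from the balance between the singular weight $h_K^{-p}$ and the tip contributions in this decomposition.
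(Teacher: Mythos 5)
Your outline follows the same general philosophy as the paper --- argue by contradiction, use the upper half of the sandwich to force a John-ellipsoid axis to collapse, use the lower half to exclude collapse, and track the position of $O$ since $\widetilde{C}_{p,q,K}$ is not translation invariant --- and your ``rod'' and ``pancake'' scaling heuristics are informal versions of what the paper makes precise in its basic estimate $r_1r_2r_3\approx r_3^{3-q+p}$ (Lemma~\ref{q3 main}) and Corollary~\ref{q3 estimate}. So the first half of your plan is recoverable and broadly matches the paper.

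The genuine gap is in the case you yourself flag as the main obstacle: the thin-pancake degeneration $r_1\ll r_2\approx r_3$ with $O$ near $\partial K'$ (the paper's Case~II, subcase~(ii)). Your proposed decomposition of $S^2$ into caps around ``tips,'' a belt around $\{h_K=0\}$, and a good region does not fit this geometry. After normalizing by $r_3$, the bodies converge to a two-dimensional convex set $K_\infty\subset e_1^\perp$ with $O\in\partial K_\infty$, so the exceptional set where $h_K\to 0$ is not a great circle but (generically) a single direction in $S^1=S^2\cap e_1^\perp$, and the dual-curvature mass is not being pushed to a neighborhood of $\{h_K=0\}$ --- rather it gets spread thinly over the whole equatorial band, because the body is flat in the $e_1$-direction. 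Hitting the lower bound $\lambda^{-1}\mathcal{H}^2$ with this thin spreading is exactly the hard point, and ``carefully estimating the weight $h_K^{-p}$'' on a belt cannot by itself close it. What the paper actually does here is quite different and is the core new idea: Lemmas~\ref{q3 case II subcase ii.1 lemma} and~\ref{q3 case II subcase ii.2 lemma} produce two normal directions $\nu,\xi$ on opposite flat sides (near $\pm e_1$) whose support function values are extremely small ($\lesssim r_3^{2-3(q-p)}$), so $K_m$ is squeezed between two nearly parallel support planes; this, combined with a projection/Fubini bound and the elementary but essential two-dimensional fact that the boundary arc length-to-height ratio $\theta_\varepsilon/\varepsilon\to\infty$ near a boundary point with a tangent (Lemma~\ref{q3 case II subcase ii.3 lemma}), forces $\widetilde{C}_{p,q,K_m}$ of a fixed small cap to be too small. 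Nothing in your proposal supplies either the two-flat-normals mechanism or the $\theta_\varepsilon/\varepsilon\to\infty$ estimate, and without something equivalent the contradiction in the pancake-with-$O$-on-the-rim case does not appear to go through.

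A secondary, smaller gap: taking a Hausdorff-convergent subsequence of the $K_j$ themselves is not directly useful when ${\rm diam}\,K_j\to\infty$; the paper avoids this by doing almost all the work with single-body quantitative estimates and only passing to a (normalized) limit in the very last lemma, after the diameter has already been controlled. You would need a similar normalization before any compactness argument can be invoked.
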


We note that if $n=3,4$ and $q$ is very close to $n$, then B\"or\"oczky, Chen, Liu, Saroglou  \cite{BCLS} verifies Problem~\ref{problem-C0-estimate}. 

Let us discuss an application of Theorem~\ref{n3-ppos--C0-estimate} to uniqueness in the near isotropic $L_p$ dual Minkowski problem. In his classical paper about the Gauss curvature flow and ``worn stones", Firey \cite{Fir74}  asked in 1974 whether the solution of the $L_p$ Minkowski problem 
\eqref{LpMonge} is unique in the case when $n\geq 3$, $p=0$ and $f$ is the constant one function (the last property is usually referred to as the isotropic case). Now the solution of 
\eqref{LpMonge} is known to be unique for any positive bounded function $f$ if $p>1$
according to Hug, Lutwak, Yang, Zhang \cite{HLYZ05} and Chou, Wang \cite{ChW06}, but uniqueness may fail if $p<1$ (see Chen, Li, Zhu \cite{CLZ17} if $p\in(0,1)$, Chen, Li, Zhu \cite{CLZ19} if $p=0$,   Li, Liu, Lu \cite{LLL22} if $p<0$, and also E. Milman \cite{Mil24} for a systematic study). In addition, if $n\geq 3$ and $q=n$ and $p=-n$, then even in the isotropic case when $f$ is the constant one function, ellipsoids of given volume show that the solution of 
\eqref{Lp-dualMink-Monge-Ampere-intro} is not unique according to
 the earlier papers Calabi \cite{Cal58}, Pogorelov \cite{Pog72}, Cheng, Yau \cite{ChY86}, and the novel approaches Crasta, Fragal\'a \cite{CrF23}, Ivaki, E. Milman \cite{IvM23} and Saroglou \cite{Sar22}. On the other hand, the celebrated paper Brendle, Choi, Daskalopoulos \cite{BCD17} established the uniqueness of the solution of \eqref{Lp-dualMink-Monge-Ampere-intro} in the isotropic case if $n\geq 3$, $p\in(-n,1)$ and $q=n$ (see also Saroglou \cite{Sar22}).

Using the method in B\"or\"oczky, Chen, Liu, Saroglou  \cite{BCLS}, one obtains the following uniqueness result based on Theorem~\ref{n3-ppos--C0-estimate}.

\begin{coro}
\label{local-uniqueness-Cpq}
For $\alpha\in(0,1)$ and $p\in[0,1)$, there exists a constant $\varepsilon_0\in(0,1)$ that depends only on $\alpha$ and $p$ such that if $|q-3|<\varepsilon_0$ and 
$f\in C^{\alpha}(S^{2})$ satisfies $\|f-1\|_{C^{\alpha}(S^{2})}<\varepsilon_0$, then the equation
$$
d\widetilde{C}_{p,q,K}=fd\mathcal{H}^{2}
$$
has a unique solution in ${\cal K}^3_o$ in the sense of  Alexandrov, and $u=h_K|_{S^{2}}$ is a positive $C^{2,\alpha}$ solution of \eqref{Lp-dualMink-Monge-Ampere-intro}.
\end{coro}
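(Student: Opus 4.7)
My plan is to argue by contradiction via a Blaschke-type compactness argument, identify the limiting body as the unit ball via the Brendle--Choi--Daskalopoulos uniqueness theorem, and close with an implicit function theorem for the linearization of the $L_p$ $q$th dual Monge--Amp\`ere operator at the round sphere. This follows the strategy of \cite{BCLS}.

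Suppose the statement fails. Then there exist sequences $q_k\to 3$, $f_k\to 1$ in $C^{\alpha}(S^2)$, and distinct $K_k,L_k\in\mathcal{K}^3_o$ with
$$
d\widetilde{C}_{p,q_k,K_k}=f_k\,d\HH^2\qquad\text{and}\qquad d\widetilde{C}_{p,q_k,L_k}=f_k\,d\HH^2.
$$
For $k$ large, $\tfrac12\leq f_k\leq 2$, so Theorem~\ref{n3-ppos--C0-estimate} (applied with $\lambda=2$) produces a $k$-independent constant $C$ with $\sup_{S^2}h_{K_k},\sup_{S^2}h_{L_k}\leq C$ and $|K_k|,|L_k|\geq C^{-1}$. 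Blaschke selection yields subsequential Hausdorff limits $K_k\to K_\infty$ and $L_k\to L_\infty$ with $K_\infty,L_\infty\in\mathcal{K}^3_o$. Weak continuity of $\widetilde{C}_{p,q,\cdot}$ jointly in the body and in $q$, combined with $f_k\to 1$ uniformly, then gives $d\widetilde{C}_{p,3,K_\infty}=d\HH^2=d\widetilde{C}_{p,3,L_\infty}$. Since $q=n=3$ and $p\in[0,1)\subset(-3,1)$, the isotropic uniqueness theorem of Brendle, Choi and Daskalopoulos \cite{BCD17} forces both limits to be origin-centered balls; the equation pins the common radius to $1$, so $K_\infty=L_\infty=B^3$.

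Uniform positive bounds on $f_k$ together with Hausdorff convergence to the strictly convex smooth body $B^3$ let one invoke Caffarelli's regularity theory for Monge--Amp\`ere equations; the uniform $C^\alpha$ bounds on $f_k$ then upgrade the convergence to $h_{K_k},h_{L_k}\to 1$ in $C^{2,\alpha}(S^2)$. Now define
$$
\Phi(u;q,f):=u^{1-p}\bigl(\|\nabla u\|^2+u^2\bigr)^{(q-3)/2}\det\bigl(\nabla^2 u+uI\bigr)-f,
$$
a smooth map near $(u_0;q_0,f_0)=(1;3,1)$ from $C^{2,\alpha}(S^2)\times\R\times C^{\alpha}(S^2)$ into $C^{\alpha}(S^2)$, with $\Phi(u_0;q_0,f_0)=0$. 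A direct linearization in $u$ yields
$$
D_u\Phi(1;3,1)[v]=\Delta v+(3-p)v,
$$
where $\Delta$ is the spherical Laplacian on $S^2$. The spectrum of $-\Delta$ on $S^2$ equals $\{k(k+1):k\geq 0\}=\{0,2,6,12,\ldots\}$, and $3-p\in(2,3]$ for $p\in[0,1)$, so $3-p$ is not an eigenvalue and $D_u\Phi(1;3,1)$ is a linear isomorphism. The implicit function theorem furnishes a $C^{2,\alpha}$-neighborhood $U\ni 1$ and a neighborhood $V\ni(3,1)$ such that $\Phi(u;q,f)=0$ has a unique solution $u\in U$ for every $(q,f)\in V$. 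For large $k$, the regularity step places both $h_{K_k}$ and $h_{L_k}$ in $U$ and $(q_k,f_k)\in V$, forcing $K_k=L_k$ and contradicting the assumption. The $C^{2,\alpha}$ regularity of the unique solution asserted in the corollary is a by-product of this argument.

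I expect the main technical obstacle to be the higher regularity step, where Hausdorff convergence must be upgraded to $C^{2,\alpha}$ convergence uniformly in $q$ near $3$. This uses Caffarelli's estimates near the smooth, strictly convex limit $B^3$; the crucial inputs are the uniform $C^0$ bounds of Theorem~\ref{n3-ppos--C0-estimate} and the uniform two-sided bounds on $f_k$. The remaining ingredients (Blaschke compactness, Brendle--Choi--Daskalopoulos, and the elementary spectral check $3-p\notin\{0,2,6,12,\ldots\}$) are essentially formal once this regularity is secured.
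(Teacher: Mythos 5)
The paper does not give its own proof of Corollary~\ref{local-uniqueness-Cpq}; it explicitly defers to \cite{BCLS}, stating only that the method there, combined with a $C^0$ estimate of the type in Theorem~\ref{n3-ppos--C0-estimate}, yields the result. Your proposal is a correct reconstruction of that method: compactness from the $C^0$ estimate, identification of the Hausdorff limit as $B^3$ via Brendle--Choi--Daskalopoulos at $q=n=3$, a bootstrap to $C^{2,\alpha}$ regularity by Caffarelli's theory (as in Lemma~\ref{Gauss-map-bijective}~(v)), and the implicit function theorem around the constant solution. Your linearization $D_u\Phi(1;3,1)[v]=\Delta v+(3-p)v$ is computed correctly (the $(q-3)/2$ exponent kills the contribution of the middle factor at $q=3$, and $D\det|_{I}=\operatorname{tr}$ gives $\Delta v+2v$), and the spectral check $3-p\in(2,3]\not\subset\{0,2,6,12,\dots\}$ is right.

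Two places where you compress steps that the full argument must address, and which you partly flag yourself: (a) the weak continuity of $(K,q)\mapsto\widetilde{C}_{p,q,K}$ along Hausdorff-convergent sequences with a nondegenerate limit needs justification when $p>0$, since a priori $O$ could approach $\partial K$; this works because $1-p>0$ keeps the integrand in \eqref{lp-dual-curvature-omega-boundary} bounded, but it is a lemma, not a formality. (b) The $C^{2,\alpha}$ convergence you extract from Caffarelli's estimates a priori gives compactness only in $C^{2,\alpha'}$ for $\alpha'<\alpha$; one should run the IFT in $C^{2,\alpha'}$ and recover the asserted $C^{2,\alpha}$ regularity of the (unique) solution afterwards from Lemma~\ref{Gauss-map-bijective}~(v). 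Neither is a conceptual gap, and overall the proposal matches the approach the paper attributes to \cite{BCLS}.
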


Actually, B\"or\"oczky, Chen, Liu, Saroglou  \cite{BCLS} prove 
Corollary~\ref{local-uniqueness-Cpq}, and its analogues for any $n\geq 3$.
  
We note that question of uniqueness of the solution has been discussed in various versions of the Minkowski problem.
The Gaussian surface area measure of a $K\in\mathcal{K}^n_o$ is defined by
Huang, Xi and Zhao \cite{HXZ21}, whose results are extended  by Feng, Liu, Xu \cite{FLX23}, Liu \cite{Liu22} and Feng, Hu, Xu \cite{FHX23}. The fact that only balls have the property that the density of the Gaussian surface area measure is constant 
is proved by Chen, Hu, Liu, Zhao \cite{CHLZ} for convex domains in $\R^2$, and in the even case for $n\geq 3$
by \cite{CHLZ} and Ivaki, Milman \cite{IvM23}. The intensively investigated $L_p$-Minkowski conjecture
stated by B\"or\"oczky, Lutwak, Yang,  Zhang \cite{BLYZ12}
claims the uniqueness of the even solution of \eqref{LpMonge} for even positive $f$, 
see for example
B\"or\"oczky, Kalantzopoulos \cite{BoK22},
Chen, Huang, Li,  Liu \cite{CHLL20},
Colesanti,  Livshyts, Marsiglietti \cite{CLM17},
Colesanti,  Livshyts \cite{CoL20},
Ivaki, E. Milman \cite{IvM23b}, 
Kolesnikov \cite{Kol20},
Kolesnikov, Livshyts \cite{KoL},
Kolesnikov, Milman \cite{KoM22},
Livshyts, Marsiglietti, Nayar, Zvavitch \cite{LMNZ20},
Milman \cite{Mil24,Mil25}, 
Saroglou \cite{Sar15,Sar16},
Stancu \cite{Stancu,Stancu1,Sta22},
van Handel \cite{vHa} 
for partial results, and B\"or\"oczky \cite{Bor23} for a survey of the subject.

For the $L_p$ $q$th dual Minkowski problem
due to Lutwak, Yang, Zhang \cite{LYZ18} (see Huang, Lutwak, Yang, Zhang \cite{HLYZ16}
for the original $q$th dual Minkowski problem), Chen, Li \cite{ChL21} and
Lu, Pu \cite{LuP21} solve essentially the case $p>0$, and Huang, Zhao \cite{HuZ18} and Guang, Li, Wang \cite{LGW23} discuss the case $p<0$ (see Gardner, Hug, Weil, Xing, Ye \cite{GHWXY19,GHXY20} for Orlicz versions of some of these results using the variational method, and Li, Sheng, Wang \cite{LSW20} for an approach via the flow method). Uniqueness of the solution of the $L_p$ $q$th  dual Minkowski problem is thoroughly investigated by Li, Liu, Lu \cite{LLL22}. The case when $n=2$ and $f$ is a constant function has been completely clarified by
Li,  Wan \cite{LiW}. If the $L_p$ $q$th dual curvature is constant, then Ivaki, Milman \cite{IvM23} shows uniqueness of the even solution when $p>-n$ and $q\leq n$.\\

Concerning the structure of the paper, Section~\ref{sec-basics} summarizes the basic properties of $L_p$ dual curvature measures that we need, and Section~\ref{secRegularity} describe the corresponding regularity properties. A ``basic estimate" frequently used in the argument for Theorem~\ref{n3-ppos--C0-estimate} is verified in Section~\ref{sec-n3-ppos--C0-estimate-basic}, 
and finally, Theorem~\ref{n3-ppos--C0-estimate} is proved in Section~\ref{sec-n3-ppos--C0-estimate}.

\section{Some basic notation and notions}
\label{sec-basics}

This section discusses the basic notions and notations used throughout the paper, extending the discussion at the beginning of Section~\ref{secIntroduction}. For notions in convexity, see Schneider \cite{Sch14}. 
   As usual,  ${\rm int}\,X$ stands for the interior of an $X\subset \R^n$.
For a compact subset $K\subset \R^n$, we write ${\rm relint}\,K$ and $\partial K$ to denote its relative interior and relative boundary with respect to the topology of the linear hull of $K$ (that is the usual notion of boundary if ${\rm int}\,K\neq \emptyset$). 

For a convex body $K\in\mathcal{K}^n_o$,
 if $v\in S^{n-1}$ is an exterior unit normal at $x=\varrho_K(w)\cdot w\in\partial K$, then
\begin{equation}
\label{hK-rhoK}
h_K(v)=\langle \varrho_K(w)\cdot w,v\rangle\leq \varrho_K(w),
\end{equation}
and hence if in addition, $h_K$ is differentiable at $v$, and $u=h_K|_{S^{n-1}}$,  then 
\begin{align}
\label{DhK-u-x}
Dh_K(v)=&\nabla u(v)+u(v)\cdot v=x;\\
\label{DhK-length-rhoK}
\|Dh_K(v)\|=&\sqrt{\|\nabla u\|^2+u^2}=\varrho_K(w)=\|x\|;\\
\label{DhK-rhoK}
h_K(v)=&u(v)\leq  \|Dh_K(v)\|.
\end{align}
	
For $K\in \mathcal{K}^n_o$, another fundamental measure is the cone volume measure $dV_K=\frac1n\,h_K\,dS_K$. To provide a geometric interpretation of the cone volume measure,
	we consider the Borel measure $\widetilde{V}_K$ on $\partial K$, namely, for measurable $Z\subset\partial K$, we define
	\begin{equation}
		\label{tildeVKZ-def} 
		\widetilde{V}_K(Z)=\frac1n\int_{Z\cap \partial' K}\langle x,\nu_K(x)\rangle\,dx=\frac1n\int_{Z\cap \partial' K}h_K(\nu_K(x))\,dx.
	\end{equation}
	In particular,  $\widetilde{V}_K$ is inspired by the pullback of the cone volume measure to $\partial K$; namely, for Borel $\omega\subset S^{n-1}$, \eqref{tildeVKZ-def} yields that
	\begin{equation}
		\label{tildeVKZ-VK}
	V_K(\omega)=	\widetilde{V}_K(\{x\in \partial' K: \nu_K(x)\in \omega\}).
	\end{equation}

	Next we provide an interpretation of $\widetilde{V}_K$ in terms of the radial function $\varrho_K$ of a $K\in \mathcal{K}^n_o$ on $S^{n-1}$. We consider the open convex cone $\Sigma_K=\{(0,\infty)x:x\in{\rm int}\,K\}$, and hence $\Sigma_K=\R^n$
	if $o\in{\rm int}K$. 
	Since
	$\{x\in\partial' K:\langle \nu_K(x),x\rangle=0\}= \partial \Sigma_K\cap \partial' K$,
	we have $\widetilde{V}_K(\partial K\cap \partial \Sigma_K)=0$ if $\partial \Sigma_K\neq \emptyset$, the paper 
	Huang, Lutwak, Yang, Zhang \cite{HLYZ16} working on the case  when $o\in{\rm int}\,K$ yields that if $Z\subset \partial K$ is measurable, then 
\begin{equation}
\label{tildeVKZ-def2}
\widetilde{V}_K(Z)=\frac1n\int_{\pi_{S^{n-1}}(Z)}\varrho_K^n\,\mathcal{H}^{n-1}
=\left|\bigcup\{{\rm conv}\{o,x\}:\,x\in Z\}\right|,
\end{equation} 
where $\pi_{S^{n-1}}(x)=\frac{x}{||x||}$ for $x\neq O$.
For any measurable $\varphi:\,\partial K\to[0,\infty)$, we deduce from \eqref{tildeVKZ-def}  that 
\begin{equation}
\label{tildeVK-varphi-rhoK}
\int_{\partial K}\varphi(x)\,d\widetilde{V}_K(x)=\int_{\partial' K}\varphi(x)\langle \nu_K(x),x\rangle\,d\mathcal{H}^{n-1}(x).
\end{equation}
We note that if $\omega\subset S^{n-1}$, $Z=\nu_K^{-1}(\omega)\subset \partial K$ and $\Phi\in{\rm GL}(n)$ then $\langle \Phi x,\Phi^{-t}v\rangle=\langle x,v\rangle$ for $x,v\in\R^n$ yields that 
	\begin{equation}
		\label{normals-linear-image}
		\nu_{\Phi K}^{-1}\left(\Phi^{-t}_*\omega\right)=\Phi Z\mbox{ \ for \ }\Phi^{-t}_*\omega=\left\{\frac{\Phi^{-t}v}{\|\Phi^{-t}v\|}:\,v\in\omega\right\}.
	\end{equation}
	Now $\widetilde{V}_K$ nicely intertwines with linear transformations; namely, if $\Phi\in{\rm GL}(n)$ and $Z\subset\partial K$ is $\mathcal{H}^{n-1}$ measurable, then \eqref{tildeVKZ-def2} implies that
\begin{equation}
\label{tildeVK-linear-inv}
\widetilde{V}_{\Phi\,K}(\Phi\,Z)=
|\det\Phi|\cdot \widetilde{V}_K(Z),
\end{equation}
and hence for any measurable $\omega\subset S^{n-1}$, we deduce from \eqref{normals-linear-image} that
\begin{equation}
\label{VK-linear-inv}
V_{\Phi\,K}\left(\Phi^{-t}_*\omega\right)=
|\det\Phi|\cdot V_K(\omega),
\end{equation}
thus if $\varphi:S^{n-1}\to [0,\infty)$ is continuous, then
\begin{equation}
\label{VK-linear-inv-phi}
\int_{S^{n-1}}\varphi\,dV_{K}=|{\rm det} \Phi|^{-1} \int_{S^{n-1}}\varphi\circ \Phi^{t}_*\,dV_{\Phi K}.
\end{equation}
It follows from \eqref{tildeVKZ-def2} that
\begin{equation}
\label{VK-sphere-volume}
V_K(S^{n-1})=\widetilde{V}_K(\partial K)=|K|.
\end{equation}

For the $q$th dual curvature measure  of a $K\in \mathcal{K}^n_o$,  if $o\in\partial K$ and $q>0$, then readily
\begin{equation}
\label{Cq-nuo-zero}
\widetilde{C}_{q,K}(\nu_K(o))=\widetilde{C}_{q,K}\Big(\{v\in S^{n-1}:\,h_K(v)=0\}\Big)=0.
\end{equation}
Thus using \eqref{tildeVK-varphi-rhoK}, the dual curvature measure of any $K\in \mathcal{K}^n_o$ can be written as a surface integral (cf. Huang, Lutwak, Yang, Zhang \cite{HLYZ16})
\begin{align}
\label{dual-curvature-cone-surface}
\widetilde{C}_{q,K}(\omega)=&
\int_{\nu_K^{-1}(\omega)\cap\partial' K}\|x\|^{q-n}\langle x,\nu_K(x)\rangle\,d\mathcal{H}^{n-1}(x)\\
\label{dual-curvature-cone-vol}
=&n\int_{\nu_K^{-1}(\omega)\cap\partial' K}\|x\|^{q-n}\,d\widetilde{V}_K(x)
\end{align}
for a measurable $\omega\subset S^{n-1}$. According to \eqref{dual-curvature-cone-surface}, if $\varphi:S^{n-1}\to[0,\infty)$ is  measurable, then
\begin{align} 
\label{dual-curvature-phi-cone-on-boundary}
\int_{S^{n-1}}\varphi\,d\widetilde{C}_{q,K}=&
\int_{\partial' K}\varphi(\nu_K(x))\cdot \|x\|^{q-n}\langle x,\nu_K(x)\rangle\,d\mathcal{H}^{n-1}(x)\\
\label{dual-curvature-phi-cone-radial}
=&\int_{S^{n-1}}\varphi \cdot(\varrho_K\circ \alpha^*_K)^{q}\,d\mathcal{H}^{n-1}.
\end{align}
It follows from \eqref{dual-curvature-phi-cone-on-boundary}  that if $h_K$ is differentiable at each point of a measurable $\omega\subset S^{n-1}$ and $\varphi:\omega\to[0,\infty)$ is measurable, then
\begin{align}
\label{dual-curvature-phi-cone-surface}
\int_{\omega}\varphi\,d\widetilde{C}_{q,K}=&
\int_{\omega}\varphi \cdot \|Dh_K\|^{q-n}h_K\,dS_K\\
\label{dual-curvature-phi-cone-vol}
=& n\int_{\omega}\varphi \cdot\|Dh_K\|^{q-n}\,dV_K,
\end{align}
and hence
 \eqref{tildeVKZ-def}, \eqref{tildeVKZ-VK} and \eqref{dual-curvature-cone-vol} yield that
	$\widetilde{C}_{n,K}=nV_K$.

Let $q>0$  and $K\in\mathcal{K}_o^n$. For $p\in\R$, 
it follows from \eqref{Cq-nuo-zero} that
it is possible to extend the definition of Lutwak, Yang, Zhang \cite{LYZ18} of the $L_p$ $q$th dual  curvature measure on $S^{n-1}$ as
\begin{equation}
\label{Lpqth-dual-definition}
d\widetilde{C}_{p,q,K}= h_K^{-p}\,d\widetilde{C}_{q,K},
\end{equation}
and hence  for any measurable $\omega\subset S^{n-1}$,  \eqref{dual-curvature-phi-cone-on-boundary} and \eqref{dual-curvature-phi-cone-radial} imply that if $p<1$, then
\begin{align}
\label{lp-dual-curvature-omega-boundary}
\widetilde{C}_{p,q,K}(\omega)=&\int_{\nu_K^{-1}(\omega)\cap\partial' K} \|x\|^{q-n}\langle x,\nu_K(x)\rangle^{1-p}\,d\mathcal{H}^{n-1}(x)\\
\label{dual-curvaturep-pq-radial}
=&\int_{\alpha^*_K(\omega)}h_K\left(\nu_K(\varrho_K(v)\cdot v)\right)^{-p} \cdot\varrho_K(v)^{q}\,d\mathcal{H}^{n-1}(v),
\end{align}
whose value might be infinite. We deduce from \eqref{SKMonge}, \eqref{DhK-length-rhoK} and \eqref{lp-dual-curvature-omega-boundary} that if  $q>0$, $p<1$ and $K\in\mathcal{K}_o^n$ has $C^2_+$ boundary, then
the  Monge-Amp\`ere equation on $S^{n-1}$ corresponding to the $L_p$ $q$th dual Minkowski problem  is
\begin{equation}
\label{Lp-dualMink-Monge-Ampere}
\left(\|\nabla u\|^2+u^2\right)^{\frac{q-n}2}\cdot u^{1-p}\det\left(\nabla^2 u+u\,I\right)=f
\end{equation}
where $u=h_K|_{S^{n-1}}$.
In addition, for a non-negative function $f\in L^1(S^{n-1})$, $q>0$ and $p<1$, a $K\in\mathcal{K}_o^n$ is a solution of the $L_p$ dual Minkowski problem \eqref{Lp-dualMink-Monge-Ampere} in Alexandrov's sense (in the sense of measure) if
\begin{equation}
\label{Lp-dualMink-Monge-Ampere-measure}
d\widetilde{C}_{p,q,K}=f\,d\mathcal{H}^{n-1}.
\end{equation}

As before, let $K\in\mathcal{K}_0^n$. It follows from the definition of $\widetilde{C}_{p,q,K}$ that
$$
\widetilde{C}_{p,q,\lambda\,K}=\lambda^{q-p}\widetilde{C}_{p,q,K} \mbox{ \ for $\lambda>0$}.
$$
As special cases, if $q>0$ and $p<1$, then
\begin{align*}
\widetilde{C}_{0,q,K}=&\widetilde{C}_{q,K},\\
\widetilde{C}_{p,n,K}=&h_K^{1-p}\,dS_K=S_{p,K}\mbox{ \ is Lutwak's so-called $L_p$ surface area.}
\end{align*}
We deduce from \eqref{dual-curvature-phi-cone-surface} and \eqref{dual-curvature-phi-cone-vol} that if $h_K$ is differentiable at each point of a measurable $\omega\subset S^{n-1}$ and $\varphi:\omega\to[0,\infty)$ is measurable, then
\begin{align}
\label{lp-dual-curvature-phi-cone-surface}
\int_{\omega}\varphi\,d\widetilde{C}_{p,q,K}=&
\int_{\omega}\varphi \cdot \|Dh_K\|^{q-n}h_K^{1-p}\,dS_K\\
\label{lp-dual-curvature-phi-cone-vol}
=& n\int_{\omega}\varphi \cdot\|Dh_K\|^{q-n}h_K^{-p}\,dV_K.
\end{align}

We recall that $u=h_K|_{S^{n-1}}$, being Lipschitz, is differentiable $\mathcal{H}^{n-1}$ a.e. on $S^{n-1}$. In addition, if $u$ is differentiable at a $v\in S^{n-1}$, then (cf. \eqref{DhK-u-x} and \eqref{DhK-length-rhoK})
$\langle x,v\rangle=h_K(v)=u(v)$ and
\begin{align*}
x=&Dh_K(v)=\nabla u(v)+u(v)\cdot v,\\
\|x\|=&\varrho_K\circ \alpha^*_K(v)=\sqrt{\|\nabla u(v)\|^2+\|u(v)\|^2}
\end{align*}
where $\nabla u$ is the spherical gradient of $u$. Thus we deduce from  \eqref{dual-curvature-phi-cone-surface} and \eqref{dual-curvature-phi-cone-vol} that if $u$ is differentiable at each point of a measurable $\omega\subset S^{n-1}$, then
	\begin{align}
		\label{dual-curvature-cone-vol1}
		\widetilde{C}_{q,K}(\omega)=&n\int_{\omega}\left(\|\nabla u\|^2+\|u\|^2\right)^{\frac{q-n}2}\cdot u\,dS_K\\
		\label{dual-curvature-cone-vol0}
		=&\int_{\omega}(\varrho_K\circ \alpha^*_K)^{q-n}\,dV_K.
	\end{align}
	Here $u$ is differentiable at each point of a measurable $\omega\subset S^{n-1}$ if and only if $\nu_K^{-1}(\omega)\subset\partial K$ contains no segment.
According to \eqref{dual-curvature-cone-vol1}, 
the  Monge-Amp\`ere equation corresponding to the $q$th dual Minkowski problem for $q>0$ is
\begin{equation}
\label{dualMink-Monge-Ampere}
\left(\|\nabla u\|^2+u^2\right)^{\frac{q-n}2}\cdot u\det\left(\nabla^2 u+u\,I\right)=f,
\end{equation}
and if $p<1$, then the  Monge-Amp\`ere equation corresponding to the $L_p$ $q$th dual Minkowski problem is
\begin{equation}
\label{Lp-dualMink-Monge-Ampere}
\left(\|\nabla u\|^2+u^2\right)^{\frac{q-n}2}\cdot u^{1-p}\det\left(\nabla^2 u+u\,I\right)=f.
\end{equation}

For compact, convex sets $K_1,K_2\subset \R^n$, their Hausdorff distance is just the $L^\infty$ norm of the restrictions of the support functions to $S^{n-1}$; namely, it is  $\min\{|h_{K_1}(x)-h_{K_2}(x)|:x\in S^{n-1}\}$.  
In this paper, convergence of compact, convex sets in $\R^n$ is always meant convergence in terms of the Hausdorff distance. 
We note according the Blaschke selection theorem, any bounded sequence $K_m\in \mathcal{K}^n_o$ of convex bodies contains a subsequence that tends to a compact convex set $K\subset\R^n$ with $O\in K$.
Finally, we sometimes use the following notation: if $X\subset \R^n$, then 
$$
{\rm pos}_+X=\{t x:\,x\in X\mbox{ and }t>0\}.
$$
	
\section{Regularity if the dual Minkowski curvature is bounded and bounded away from zero}
\label{secRegularity}
	
Our technical Lemma~\ref{Gauss-map-bijective} is based on Alexandrov's work and Caffarelli's classical papers \cite{Caf90a,Caf90b,Caf93} (see 
Figalli \cite{Fig17} for a more readable account). 
Let us recall some fundamental properties of Monge-Amp\`ere equations based on Figalli \cite{Fig17} and
 the survey Trudinger, Wang~\cite{TrW08}.
Given a convex function $v$ defined in an open convex set $\Omega\subset\R^{2}$, $D v$ and $D^2 v$ denote its gradient and its Hessian, respectively. The subgradient $\partial v(x)$ of $v$ at $x\in\Omega$ is defined as
$$
\partial v(x) =\{z\in\R^{2} : v(y)\geq v(x)+\langle z,y-x\rangle \text{ for each $y\in\Omega$}\},
$$
which is a compact convex set.
If $\omega\subset\Omega$ is a Borel set, then its associated Monge-Amp\`ere measure is 
\begin{equation}
\label{Monge-Ampere-measure}
\mu_v(\omega)=\mathcal{H}^2\Big(\bigcup_{x\in\omega}\partial v (x)\Big).
\end{equation}
We observe that if $v$ is $C^2$, then
$$
\mu_v(\omega)=\int_\omega \det( D^2 v)\,d\mathcal{H}^{2}.
$$
The following regularity properties of Monge-Amp\`ere equations have been proved in a series of papers by Alexandrov and  Caffarelli, and are thoroughly presented in a simplified manner by Figalli \cite{Fig17}.

\begin{theo}[Caffarelli]
\label{Caffarelli}
For $\lambda>1$, a bounded open convex set $\Omega\subset\R^2$,  and the measurable function $f$ on $\Omega$ with $1/\lambda\leq f\leq \lambda$, let the convex function $v$ on $\Omega$ satisfy
\begin{equation}
\label{Caffarelli-eq}
\det v=f
\end{equation}
in the sense of measure; namely, $d\mu_v=f\,d\mathcal{H}^2$ for the Monge-Amp\`ere measure $\mu_v$.
\begin{description}
		
\item{(i)} $v$ is strictly convex, and $v$ is locally $C^{1,\alpha}$ for an $\alpha\in(0,1)$ depending  on $\lambda$.
			
\item{(ii)} If $f$ is locally $C^{0,\beta}$ for a $\beta\in(0,1)$, then $v$ is locally $C^{2,\beta}$.

			
\end{description}
\end{theo}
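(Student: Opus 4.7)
The plan is to follow Caffarelli's classical section-based strategy, combined with the affine invariance of the Monge-Amp\`ere operator. For $x_0\in\Omega$ and $p\in\partial v(x_0)$, the section of height $h>0$ is
$$
S_h(x_0)=\bigl\{x\in\Omega:v(x)<v(x_0)+\langle p,x-x_0\rangle+h\bigr\},
$$
which is bounded and convex for small $h$. By John's lemma there is an affine map $T$ with $B_1\subset T(S_h(x_0)-x_0)\subset B_2$, and the rescaled convex function $\tilde v(y)=(\det T)\bigl[v(T^{-1}y+x_0)-v(x_0)-\langle p,T^{-1}y\rangle\bigr]$ again satisfies a Monge-Amp\`ere equation with density bounded in $[1/\lambda,\lambda]$ on a normalized domain. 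This normalization is the workhorse for every subsequent estimate.

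For the strict convexity in (i), I would argue by contradiction: if $v$ coincides with a supporting plane $\ell$ along some segment $\sigma\subset\Omega$, one shows that the contact set $\Sigma=\{v=\ell\}$ cannot be compactly contained in $\Omega$. A carefully chosen barrier obtained by adding a small parabolic bump to $\ell$ produces a section whose Monge-Amp\`ere measure, computed via the normal map, is forced to contain a line segment of positive length transverse to $\sigma$; then $\mu_v$ picks up a $1$-dimensional component that cannot be dominated by $\lambda\,\HH^2$, contradicting $f\leq\lambda$. Once strict convexity is proved, $S_h(x_0)$ is compactly contained in $\Omega$ and its diameter shrinks to $0$ as $h\to 0^+$. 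The quantitative $C^{1,\alpha}$ statement then follows from the ``balanced section'' lemma, established by a compactness-and-normalization argument: normalized sections are uniformly comparable to balls, and this yields $\mathrm{diam}\,S_h(x_0)\leq C h^{\alpha}$, which is equivalent to H\"older continuity of $Dv$.

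For part (ii), I would bootstrap from (i) using Caffarelli's interior $C^{2,\alpha}$ estimate. On a small section, $f$ is close in $C^{0,\beta}$ to the constant $f(x_0)$, so after normalization the equation differs from $\det D^2 w=f(x_0)$ by a $C^{0,\beta}$ perturbation. Pogorelov's interior estimate gives uniform $C^{2,\alpha}$ bounds for the constant-coefficient equation on the normalized section, and a Schauder-type perturbation argument iterated across dyadic scales upgrades this to $C^{2,\beta}$ regularity of $v$ in the original coordinates.

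The main obstacle is part (i): both the exclusion of interior contact segments and the balanced-section lemma are Caffarelli's deepest contributions; they require barrier constructions and careful use of the Alexandrov-sense equation that cannot be bypassed by elementary arguments. Part (ii) is then a comparatively standard perturbative bootstrap built on the $C^{1,\alpha}$ estimate.
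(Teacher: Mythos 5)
The paper does not prove this statement; Theorem~\ref{Caffarelli} is recorded as a known result, with the proof credited to Aleksandrov and Caffarelli and explicitly deferred to the cited papers \cite{Caf90a,Caf90b,Caf93} and to Figalli's exposition \cite{Fig17}. There is therefore no in-paper proof to compare your argument against; the only reasonable benchmark is the canonical argument in those references.

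Against that benchmark, your sketch is a faithful high-level outline: sections $S_h(x_0)$, John-ellipsoid normalization exploiting the affine covariance of the operator, strict convexity by a contradiction argument, $C^{1,\alpha}$ through the balanced-section estimate, and $C^{2,\beta}$ through Pogorelov's interior estimate bootstrapped via a dyadic-scale Schauder perturbation. This is exactly the strategy the paper is pointing the reader to, so there is no genuinely ``different route'' here.

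One imprecision worth flagging is in your strict-convexity paragraph. You attribute the contradiction to the upper bound $f\leq\lambda$ via the claim that $\mu_v$ ``picks up a $1$-dimensional component.'' In fact, strict convexity in dimension $2$ is the Aleksandrov--Heinz phenomenon, which is driven by the \emph{lower} bound $f\geq 1/\lambda$ and is genuinely two-dimensional (Pogorelov's example $v=|x'|^{2-2/n}(1+x_n^2)$ shows that the same conclusion fails for $n\geq 3$ under two-sided density bounds). The standard mechanism is not that the Monge--Amp\`ere measure acquires a singular part on the contact segment $\sigma$: the subgradient image $\bigcup_{x\in\sigma}\partial v(x)$ remains $\mathcal{H}^2$-null, so $\mu_v(\sigma)=0$, consistent with absolute continuity. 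Rather, one degenerates sections around $\sigma$ (e.g.\ by tilting the supporting plane), applies the Aleksandrov maximum principle, and compares the resulting geometric constraints against $\mu_v\geq\lambda^{-1}\mathcal{H}^2$; it is this comparison that breaks. Your conclusion is right, but the bound you cite as being violated is the wrong one, and the claim of a singular part of $\mu_v$ does not hold as stated. The remainder of the outline (normalized sections comparable to balls, $\mathrm{diam}\,S_h\lesssim h^{\alpha}$, Pogorelov plus perturbation for part (ii)) is consistent with \cite{Fig17}.
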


The main result of this section is Lemma~\ref{Gauss-map-bijective}, which follows from the properties discussed in Section~\ref{sec-basics}, Caffarelli's Lemma~\ref{Caffarelli}, and from the following observation: For $q>0$, $p<1$, $n=3$, let $K\in\mathcal{K}^3_o$ 
satisfy \eqref{Lp-dualMink-Monge-Ampere}, and let $u=h_K|_{S^{2}}$.
For a fixed $e\in S^{2}$ with $h_K(e)>0$, the function $v(y)=h_K(y+e)$ of $y\in e^\bot$ satisfies that 
$v(y)=(1+\|y\|^2) u(w)$ for $w=(1+\|y\|^2)^{-1}(y+e)\in S^{2}$,  and that the Monge-Amp\`ere equation
\begin{equation}
\label{MongeAmpereRn}
\det( D^2 v(y) )=v(y)^{p-1}\left(\|Dv(y)\|^2+(\langle Dv(y),y\rangle-v(y))^2\right)^{\frac{3-q}2}\cdot g(y) 
\end{equation}
holds for $y\in e^\bot$ in a small neighborhood of the origin $O$ where 
$$
g(y)=\left(1+\|y\|^2\right)^{-\frac{3+p}2} f\left(\frac{e+y}{\sqrt{1+\|y\|^2}}\right).
$$

\begin{lemma}
\label{Gauss-map-bijective}
Let $q>0$, $p<1$, $\lambda>1$ and $K\in\mathcal{K}^3_o$ such that
 $d\widetilde{C}_{p,q,K}=fd\mathcal{H}^{2}$ for a measurable  $f:\,S^{2}\to [\frac1{\lambda},\lambda]$.
\begin{description}
\item[(i)] $h_K$ is $C^{1}$  on the set $\{h_K>0\}$; and $\partial K\backslash\Gamma_K$ is $C^1$ and contains no segment for 
$$
\Gamma_K=\{x\in\partial K:\,\exists w\in\nu_K(x) \mbox{ with } h_K(w)=0\}.
$$
			
\item[(ii)]  If $Z\subset\partial K$ is measurable, and $\omega=\nu_K(Z)$, then
\begin{align}
\label{Gauss-map-bijective-eq}
\widetilde{C}_{p,q,K}(\omega)=&n\int_{Z}\langle x,\nu_K(x)\rangle^{-p}\|x\|^{q-n}\,d\widetilde{V}_K(x)\\
\label{Gauss-map-bijective-SK-eq}
=&n\int_{Z\cap\partial' K}\langle x,\nu_K(x)\rangle^{1-p}\|x\|^{q-n}\,d\mathcal{H}^{n-1}(x).
\end{align}
			
\item[(iii)] $\varrho_K\circ\alpha^*_K(v)=\|Dh_K(v)\|>0$ for both $\mathcal{H}^{n-1}$ a.e. and  $\widetilde{C}_{p,q,K}$ a.e. $v\in S^{n-1}$, and we have
\begin{align}
\label{CpqK-from-cone-volume}
d\widetilde{C}_{p,q,K}=&nh_K^{-p}(\varrho_K\circ\alpha^*_K)^{q-n}\,dV_K=nh_K^{-p}\|Dh_K\|^{q-n}\,dV_K\\
\label{CpqK-from-SK}
=& h_K^{1-p}\|Dh_K\|^{q-n}\,dS_K\\
\label{cone-volume-from-CqK}
dV_K=&\frac{h_K^{p}}n(\varrho_K\circ\alpha^*_K)^{n-q}\,d\widetilde{C}_{p,q,K}=\frac{h_K^{p}}n\cdot \|Dh_K\|^{n-q}\,d\widetilde{C}_{p,q,K}.
\end{align}

\item[(iv)] If $O\in\partial K$, then $\mathcal{H}^{n-1}(\nu_K(o))=0$.

\item[(v)] If, in addition, $f$ is positive and $C^{0,\alpha}$ for an $\alpha\in(0,1)$, then $h_K$ is locally $C^{2,\alpha}$ on $\{h_K>0\}$, and $\partial K\backslash\Gamma_K$ is locally $C^2_+$.

\end{description}
		
\end{lemma}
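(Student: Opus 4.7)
The plan is to reduce, for each $e\in S^2$ with $h_K(e)>0$, to Caffarelli's Theorem~\ref{Caffarelli} applied to $v(y)=h_K(y+e)$ on a small Euclidean ball $B\subset e^\perp$ around $O$, and then to translate the resulting strict convexity, $C^{1,\alpha}$, and (under the extra hypothesis of (v)) $C^{2,\alpha}$ regularity back to $S^2$ and $\partial K$ via the formulas collected in Section~\ref{sec-basics}.

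\textbf{Reduction and parts (i)--(iv).} On a sufficiently small $B$, convexity and $1$-homogeneity of $h_K$ together with $h_K(e)>0$ give $c_1\le v\le c_2$ with $c_1>0$ and $\|Dv\|\le c_3$ almost everywhere. The equation $d\widetilde{C}_{p,q,K}=f\,d\mathcal{H}^2$, rewritten via (\ref{CpqK-from-SK}) as $h_K^{1-p}\|Dh_K\|^{q-3}\,dS_K=f\,d\mathcal{H}^2$ on a spherical cap around $e$, pulls back through the chart $y\mapsto(e+y)/\|e+y\|$ to a Monge--Amp\`ere equation $d\mu_v=F\,d\mathcal{H}^2$ in the sense of (\ref{Monge-Ampere-measure}), with $F$ equal to the right-hand side of (\ref{MongeAmpereRn}); the bounds above show $F$ is trapped in a positive range on $B$. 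Caffarelli's Theorem~\ref{Caffarelli}(i) then gives $v$ strictly convex and locally $C^{1,\alpha_0}$ near $O$ for some $\alpha_0\in(0,1)$. By $1$-homogeneity this yields $h_K\in C^1$ on the open set $\{h_K>0\}$, and strict convexity of $v$ near every such $e$ forbids segments in $\partial K\backslash\Gamma_K$; together, $\nu_K$ is a homeomorphism of $\partial K\backslash\Gamma_K$ onto $\{h_K>0\}$ and $\partial K\backslash\Gamma_K$ is a $C^1$ hypersurface, proving (i). Part (ii) then follows from (\ref{lp-dual-curvature-phi-cone-vol}) (with $\varphi=\mathbf{1}_\omega$) and (\ref{lp-dual-curvature-omega-boundary}), combined with the identity $n\,d\widetilde{V}_K=\langle x,\nu_K(x)\rangle\,d\mathcal{H}^{n-1}$ on $\partial'K$ from (\ref{tildeVKZ-def}). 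For (iii), (\ref{DhK-length-rhoK}) and (\ref{DhK-rhoK}) at points where $h_K$ is differentiable give $\varrho_K\circ\alpha^*_K=\|Dh_K\|\ge h_K>0$ on $\{h_K>0\}$; by (i) this set carries all of $\widetilde{C}_{p,q,K}$ (use (\ref{Cq-nuo-zero}) and (\ref{Lpqth-dual-definition})) and is $\mathcal{H}^2$-a.e.\ of $S^2$, so (\ref{CpqK-from-cone-volume})--(\ref{cone-volume-from-CqK}) follow from (\ref{lp-dual-curvature-phi-cone-surface}) and (\ref{lp-dual-curvature-phi-cone-vol}). For (iv): every $w\in\nu_K(O)$ satisfies $h_K(w)=\langle O,w\rangle=0$, so $\nu_K(O)\subset\{h_K=0\}$; (\ref{Cq-nuo-zero}) and (\ref{Lpqth-dual-definition}) yield $\widetilde{C}_{p,q,K}(\nu_K(O))=0$, and $f\ge 1/\lambda$ forces $\mathcal{H}^2(\nu_K(O))=0$.

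\textbf{Part (v) and main obstacle.} For (v), $f\in C^{0,\alpha}(S^2)$ positive makes $g$ in (\ref{MongeAmpereRn}) locally $C^{0,\alpha}$. Having $v\in C^{1,\alpha_0}$ locally from (i), the right-hand side of (\ref{MongeAmpereRn}) is locally $C^{0,\beta}$ with $\beta=\min(\alpha,\alpha_0)$, so Caffarelli's Theorem~\ref{Caffarelli}(ii) upgrades $v$ to $C^{2,\beta}$ locally; one bootstrap (with $Dv\in C^{1,\beta}$ the RHS becomes locally $C^{0,\alpha}$) yields $v\in C^{2,\alpha}$ locally. Translating back, $h_K$ is locally $C^{2,\alpha}$ on $\{h_K>0\}$, and positivity of the RHS of (\ref{Lp-dualMink-Monge-Ampere}) gives $\partial K\backslash\Gamma_K$ locally $C^2_+$. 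The main obstacle lies in the reduction step: one must verify rigorously that the Alexandrov Monge--Amp\`ere measure $\mu_v$ defined by (\ref{Monge-Ampere-measure}) coincides, modulo the projective-chart Jacobian and the weight $h_K^{p-1}\|Dh_K\|^{3-q}$, with the pullback of $\widetilde{C}_{p,q,K}$ from the spherical cap around $e$ to $B$. The pointwise identity (\ref{MongeAmpereRn}) is transparent for $C^2_+$ bodies, but for a merely convex $K$ satisfying only the measure-theoretic equation, one must chase the definition of $\widetilde{C}_{p,q,K}$ through (\ref{lp-dual-curvature-omega-boundary}) and the chart, which is the most delicate point of the proof.
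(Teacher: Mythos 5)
Your proof follows exactly the approach the paper intends: the paper itself does not spell out a detailed proof of this lemma, instead asserting that it ``follows from the properties discussed in Section~\ref{sec-basics}, Caffarelli's Lemma~\ref{Caffarelli}'' and the projective-chart observation (\ref{MongeAmpereRn}), which is precisely the reduction you carry out. The only thing I would tighten is the deduction of (ii): the formula (\ref{lp-dual-curvature-omega-boundary}) gives an integral over all of $\nu_K^{-1}(\omega)\cap\partial'K$ rather than $Z\cap\partial'K$, and to pass from one to the other you should explicitly invoke the injectivity of $\nu_K$ on $\partial K\setminus\Gamma_K$ (which you do establish in (i)) together with the vanishing of $\langle x,\nu_K(x)\rangle$ on $\Gamma_K\cap\partial'K$, so that the integrand kills the ambiguity on $\Gamma_K$.
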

\noindent{\bf Remark. } Even if $f$ is positive and $C^{0,\alpha}$ for an $\alpha\in(0,1)$, it is possible that $\Gamma_K\subset \partial K$ contains a circular disk centered at $O$ (cf. Example~4.2 in Bianchi, B\"or\"oczky, Colesanti \cite{BBC20}).\\

\section{Basic estimate for Theorem~\ref{n3-ppos--C0-estimate}}
\label{sec-n3-ppos--C0-estimate-basic}

In this section, let $p\in [0,1)$, $q>2+p$ and $\lambda>1$.
 We consider a convex body $K\in\mathcal{K}^3_o$ that satisfies
\begin{equation}
\label{n=3-density-bounded}
d\widetilde{C}_{p,q,K}=f\,d\mathcal{H}^2 \mbox{ \ on $S^2$ with } \ \lambda^{-1}\leq f\leq \lambda
\end{equation} 
for the measurable density function $f$ on $S^2$. 
It follows from  \eqref{CpqK-from-cone-volume}, \eqref{CpqK-from-SK} and \eqref{cone-volume-from-CqK} in Lemma~\ref{Gauss-map-bijective} that
we have
\begin{equation}
\label{n=3-f-density}
f\,d\mathcal{H}^2=d\widetilde{C}_{p,q,K}=3h_K^{-p}\|Dh_K\|^{q-3}\,dV_K=h_K^{1-p}\|Dh_K\|^{q-3}\,dS_K
\end{equation}
and 
\begin{equation}
\label{dual equation n=3}
dV_K=\frac13\cdot h_K^{p}\|Dh_K\|^{3-q}\,d\widetilde{C}_{p,q,K}=
\frac13\cdot h_K^{p}\|Dh_K\|^{3-q}f\,d\mathcal{H}^2.
\end{equation}
According to Example~4.2 in Bianchi, B\"or\"oczky, Colesanti \cite{BBC20}, it might happen that $O\in\partial K$.

To keep track of the size of convex body $K\in\mathcal{K}^3_o$, we use the unique ellipsoid $E\subset K$ of maximal volume, which is the so-called John ellipsoid of $K$ (see 
Ball \cite{Bal97} or Gruber, Schuster \cite{GrS05} for the uniqueness and other basic properties of the John ellipsoid). We will write $X=(X_1,X_2,X_3)$ to denote the center of the John ellipsoid of $K$, and hence 
\begin{equation}
\label{John n=2}
E\subset K\subset X+3(E-X).
\end{equation}
Let $r_1(K),r_2(K),r_3(K)>0$ be the half axes of the John ellipsoid $E$ of $K$ where we assume that
\begin{equation}
\label{r123}
r_1(K)\leq r_2(K)\leq r_3(K).
\end{equation}
Let $e_1,e_2,e_3$ be the fixed orthonormal basis of $\R^3$. Since our problem is rotation invariant, we may also assume that
$e_1,e_2,e_3$ form the principal directions of $E$; namely,
\begin{equation}
\label{riei}
r_i(K)e_i\in\partial E, \mbox{ \ }i=1,2,3.
\end{equation}
In the rest of the paper, for any $K\in\mathcal{K}^3_o$ satisfying \eqref{n=3-density-bounded}, we use the notation as in \eqref{John n=2}, \eqref{r123} and \eqref{riei}. In addition, we use the following notation concerning various quantities:
We write $a \gtrsim b$ (resp. $a \lesssim b$) if there exists a constant $C>1$ depending only on  $\lambda$, $p$ and $q$, such that $a \geq Cb$ (resp.$a \leq Cb$), and $a \approx b$ means that $C^{-1}b \leq a \leq Cb$.

Lemma~\ref{q3 main} provides our ``basic estimate" that will be used frequently throughout the paper.

\begin{lemma} \label{q3 main}
For $p\in [0,1)$, $q>2+p$ and $\lambda>1$,  if $K\in\mathcal{K}^3_o$ 
satisfies \eqref{n=3-density-bounded}, and $r_i=r_i(K)$, $i=1,2,3$, then
\begin{equation}\label{q3 main equality}
    r_1r_2r_3 \approx r_3^{3-q+p}.
\end{equation}
\end{lemma}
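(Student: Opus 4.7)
The plan is to use the John-ellipsoid comparison $|K|\approx r_1r_2r_3$ and to establish separately the two matching estimates $|K|\lesssim r_3^{3-q+p}$ and $|K|\gtrsim r_3^{3-q+p}$; dividing by $r_3$ then pins down the claim. Throughout I will use the pointwise inequalities $h_K\le\|Dh_K\|\lesssim r_3$ coming from \eqref{DhK-rhoK} and \eqref{John n=2}, together with $|X_i|\le 3r_i$ obtained from the containment $O\in K\subset X+3(E-X)$.

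For the upper bound I would start from the radial representation \eqref{dual-curvaturep-pq-radial} applied to $\omega=S^2$. Because $h_K(\nu_K(\varrho_K(v)v))\le\varrho_K(v)$ and $p\ge 0$, one obtains
\[
\int_{S^2}\varrho_K^{q-p}\,d\mathcal{H}^2\;\le\;\widetilde{C}_{p,q,K}(S^2)\;\lesssim\; 1,
\]
so Chebyshev gives $|A_t|\lesssim t^{p-q}$ for $A_t:=\{v\in S^2:\varrho_K(v)\ge t\}$; in particular $|A_{r_3/2}|\lesssim r_3^{p-q}$. I would match this with a geometric lower bound $|A_{r_3/2}|\gtrsim r_1r_2/r_3^2$: since $|X_3|\le 3r_3$, pigeonhole yields one of the slab planes $\{x_3=X_3\pm r_3/2\}$ at distance $\ge r_3/2$ from $O$, and the cross-section $D:=E\cap\{x_3=X_3\pm r_3/2\}\subset K$ is an ellipse of area $\approx r_1r_2$ whose points $y$ satisfy $\|y\|\lesssim r_3$ and $\cos\theta(y)=|y_3|/\|y\|\gtrsim 1$, where $\theta(y)$ is the angle between $e_3$ and $y$. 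The solid-angle identity $|\Omega|=\int_D\cos\theta(y)\|y\|^{-2}d\mathcal{H}^2(y)\gtrsim r_1r_2/r_3^2$, combined with $\Omega\subset A_{r_3/2}$ (since each such ray travels distance $\ge r_3/2$ inside $K$), yields the desired matching bound, so $r_1r_2\lesssim r_3^{2-q+p}$.

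For the lower bound I would observe that $K\supset E$ contains the tips $X\pm r_3 e_3$, and by the triangle inequality at least one, say $x^*$, has $\|x^*\|\ge r_3$. On the spherical cap $C:=\{v\in S^2:\langle v,x^*/\|x^*\|\rangle\ge 1/2\}$, of area $\pi$, the trivial estimate $h_K(v)\ge\langle x^*,v\rangle\ge r_3/2$ gives $h_K\gtrsim r_3$ and hence $\|Dh_K\|\ge h_K\gtrsim r_3$ on $C$. Since also $h_K,\|Dh_K\|\lesssim r_3$ globally, a case distinction on the sign of $3-q$ yields $\|Dh_K\|^{3-q}\gtrsim r_3^{3-q}$ on $C$, hence $h_K^p\|Dh_K\|^{3-q}\gtrsim r_3^{3-q+p}$ there. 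Integrating the dual formula \eqref{dual equation n=3} over $C$ and using $f\ge 1/\lambda$ gives
\[
|K|\;\ge\;\frac{1}{3\lambda}\int_C h_K^p\|Dh_K\|^{3-q}\,d\mathcal{H}^2\;\gtrsim\;r_3^{3-q+p}.
\]

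The step that requires the most attention is the solid-angle computation for $|A_{r_3/2}|$: one must use $|X_i|\le 3r_i$ to show $\|y\|\lesssim r_3$ uniformly for $y\in D$, and the planewise choice to ensure $\cos\theta(y)=|X_3\pm r_3/2|/\|y\|\gtrsim 1$, so that $\int_D\cos\theta(y)\|y\|^{-2}\,d\mathcal{H}^2\gtrsim |D|/r_3^2\approx r_1r_2/r_3^2$. Once this geometric inequality is in place, the rest of the argument is a direct combination of John's theorem, Chebyshev's inequality, and the identities \eqref{n=3-f-density}--\eqref{dual equation n=3}; the case $O\in\partial K$ causes no extra trouble since on the cap $C$ we still have $h_K\ge r_3/2>0$.
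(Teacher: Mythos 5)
Your proof is correct, but it takes a genuinely different route from the paper's. The paper constructs a single ``upper cap'' $G=\{x\in\partial K:\langle x,e_3\rangle\geq X_3+\tfrac14 r_3\}$, shows that its normal image $F$ has $\mathcal{H}^2(F)\approx 1$, and then reads off $V_K(F)\approx r_3^{3-q+p}$ directly from \eqref{dual equation n=3} together with the bounds $h_K,\|Dh_K\|\approx r_3$ on $F$; the estimate \eqref{q3 main equality} then follows because a containment argument (a ball-scaling inside the cone over $G$) shows $V_K(F)\approx r_1r_2r_3$ as well. You instead decouple the two directions: for $r_1r_2r_3\lesssim r_3^{3-q+p}$ you pass through the radial representation \eqref{dual-curvaturep-pq-radial} and use $h_K(\nu_K(\varrho_K(v)v))\leq\varrho_K(v)$ with $p\geq 0$ to obtain $\int_{S^2}\varrho_K^{q-p}\lesssim 1$, then apply Chebyshev to the superlevel set $A_{r_3/2}$ and match this against a solid-angle lower bound $|A_{r_3/2}|\gtrsim r_1r_2/r_3^2$; for $r_1r_2r_3\gtrsim r_3^{3-q+p}$ you locate a tip $x^*$ of the John ellipsoid with $\|x^*\|\geq r_3$, observe that on the corresponding spherical cap $C$ one has $r_3\lesssim h_K\leq\|Dh_K\|\lesssim r_3$, and integrate \eqref{dual equation n=3} over $C$. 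Both arguments hinge on the same three ingredients — the John comparison $|K|\approx r_1r_2r_3$, the inequality $h_K\leq\|Dh_K\|$, and the two-sided bound on $f$ — but your decomposition is more modular and avoids the paper's slightly delicate verification that the cone over $G$ captures a fixed fraction of the John ellipsoid; the cost is that you need two separate geometric constructions (the cross-section $D$ and the cap $C$) rather than one. A small point worth making explicit in your write-up: in the upper bound you need $O$ not to lie in the plane $\{x_3=X_3\pm r_3/2\}$, which is precisely what the pigeonhole choice $|X_3\pm r_3/2|\geq r_3/2$ guarantees, so the solid-angle integral is well-defined and every ray meets $D$ at most once; and in the lower bound the identity \eqref{dual equation n=3} is used as an equality of Borel measures on $C$, which is justified by Lemma~\ref{Gauss-map-bijective}(iii) and the fact that $h_K>0$ on $C$.
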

\begin{proof}
We recall that $X=(X_1,X_2,X_3)$ is the center of the John ellipsoid $E\subset K$ in \eqref{John n=2}. Without loss of generality, we can assume $X_3 \geq 0$. Denote 
$$
G:=\left\{x \in \partial K: \langle x, e_3\rangle \geq \langle X , e_3\rangle+\frac{1}{4}r_3\right\}
$$
and 
$$
F:=\{\nu_K(x):x\in G\}.
$$
\begin{figure}
    \centering
    \includegraphics[width=1\linewidth]{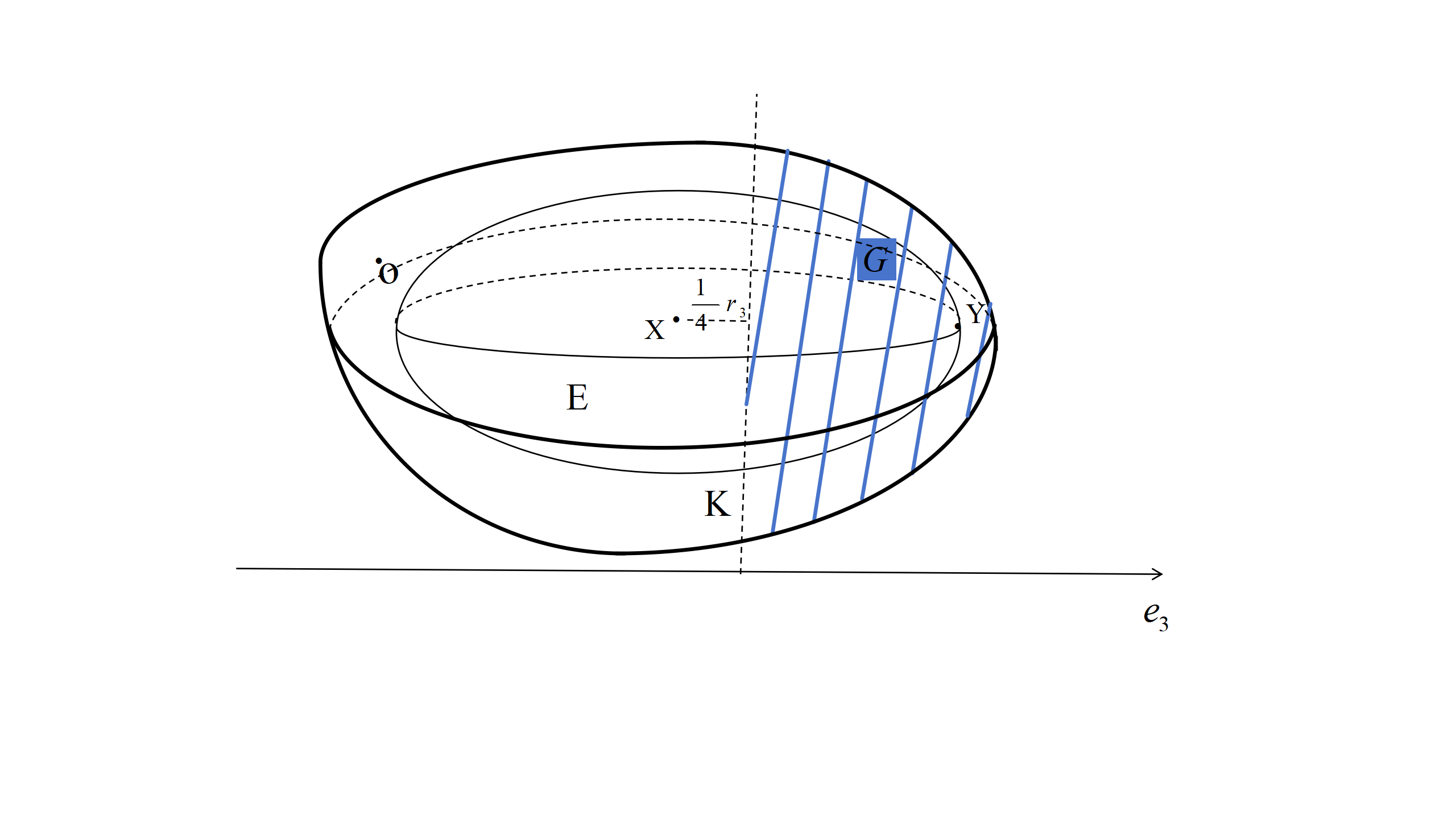}
    \caption{lemma \ref{q3 main}}
\end{figure}
Our next task is to verify  that $\mathcal{H}^2(F) \approx 1$ (see \eqref{set bound}).  
We note that the point $Y=(Y_1,Y_2,Y_3)=(X_1,X_2,X_3+r_3)\in \partial E$ belongs to $K$. For any $y=(y_1,y_2,y_3)\in \partial K \backslash G$ and $\nu_y=(\nu_{y,1},\nu_{y,2},\nu_{y,3})\in\nu_K(y)$,
by convexity we have
 $\langle Y-y,\nu_y\rangle \leq 0.$
 By the choice of $Y$ and $y$, we have 
   $Y_3-y_3 >\frac{3}{4}r_3>0.$ It follows from \eqref{John n=2} that 
   $y_1-Y_1\leq 12r_1$ and $y_2-Y_2\leq 12r_2.$
   Hence, 
    if $\nu_{y,1} >0$ and $\nu_{y,2} >0$, then
\begin{equation*}
\begin{aligned}
    \nu_{y,3} \leq \frac{y_1-Y_1}{Y_3-y_3}\cdot \nu_{y,1} +\frac{y_2-Y_2}{Y_3-y_3}\cdot \nu_{y,2}
        \leq 16\cdot \nu_{y,1}+16\cdot \nu_{y,2}.
\end{aligned}
\end{equation*}

Let $\mathbb{S}_{100}:=\{\nu=(\nu_1,\nu_2,\nu_3) \in  S^2: \nu_3\leq 100\cdot \nu_1+100\cdot \nu_2\}$, and $ S^2_{+}:=\{\nu=(\nu_1,\nu_2,\nu_3) \in  S^2: \nu_1 > 0,\nu_2 > 0\}$, then we have
\begin{equation}
\label{set lower bound}
    F \supseteq  S^2_{+} \backslash \mathbb{S}_{100},
\end{equation}
so,
\begin{equation}\label{set bound}
    \mathcal{H}^2(S^2)\geq \mathcal{H}^2(F) \geq \mathcal{H}^2\left(S^2_{+} \backslash \mathbb{S}_{100}\right)>0.
\end{equation}
On the one hand, for $\mathcal{H}^2$ a.e. $\nu \in F$, we have 
\begin{equation}\label{rho bound}
    \frac{1}{4}r_3 \leq ||Dh_K||(\nu) \leq 12 r_3.
\end{equation}
For $\nu=(\nu_1,\nu_2,\nu_3) \in  S^2_{+} \backslash \mathbb{S}_{100}$ and $z=(z_1,z_2,z_3)\in \nu^{-1}_K(\nu)\cap G$, we have $\nu_3 > 100\cdot \nu_1+100\cdot \nu_2>0.$ 
Since $|\nu|=1,$ it follows that $\nu_3 \geq \frac{1}{2}$. 
Note by \eqref{John n=2} we have that $z_1\geq -12r_1, z_2\geq -12r_2.$ By the definition of $G$,
we have $z_3\geq \frac{1}{4}r_3.$
Thus, 
\begin{equation}\label{h lower bound}
    \begin{aligned}
        h_K(\nu) &= z_1\nu_1+z_2\nu_2+z_3\nu_3\\
        &\geq  -12r_1 \nu_1- 12r_2 \nu_2+\frac{1}{4}r_3 \nu_3 \\
        &\geq  (-12r_1 \nu_1-12r_2 \nu_2+\frac{1}{8}r_3 \nu_3)+\frac{1}{8}r_3 \nu_3 \\
        &\geq \frac{1}{16}r_3.
    \end{aligned}
\end{equation}
Then, $p\geq 0$,  the estimate $\lambda^{-1}\leq f\leq \lambda$ in \eqref{n=3-density-bounded} and \eqref{dual equation n=3} yield that  
\begin{equation}\label{q3 main ineq1}
\begin{aligned}
V_K(F)&=\frac{1}{3}\int_{F}h_K^p(\nu) ||Dh_K||^{3-q}(\nu)f(\nu)d\nu\\
& \geq \frac{1}{3}\int_{ S^2_{+} \backslash \mathbb{S}_{100}}h_K^p(\nu) ||Dh_K||^{3-q}(\nu)f(\nu)d\nu\\
& \gtrsim r_3^{3-q+p},
\end{aligned}
\end{equation}
where the second inequality follows from \eqref{set lower bound} and the last inequality follows from \eqref{rho bound} and \eqref{h lower bound}.
Moreover, using $h_K(\nu)\leq ||Dh_K(\nu)||$ in \eqref{DhK-rhoK}, we deduce that
\begin{equation}\label{q3 main ineq2}
\begin{aligned}
V_K(F)&=\frac13\int_{F}h_K^p(\nu) ||Dh_K||^{3-q}(\nu)f(\nu)d\nu\\
& \leq \frac13\int_{F}||Dh_K||^{3-q+p}(\nu)f(\nu)d\nu\\
& \lesssim r_3^{3-q+p},
\end{aligned}
\end{equation}
where the last inequality follows from \eqref{set bound} and \eqref{rho bound}.
Let $\mathcal{C}_G:=\{ty: t\geq 0, y\in G\}$, then 
$|\mathcal{C}_G\cap K|\leq V_K(F)\leq |K|$ (cf. \eqref{tildeVKZ-def2}). For $Z=X+\frac12\,r_3e_3\in E$, if $x\in\frac{15}{16}Z+\frac1{16}E\subset E$, then $\langle x,e_3\rangle>\langle X,e_3\rangle+\frac14\,r_3$, and hence $\frac{15}{16}Z+\frac1{16}E\subset \mathcal{C}_G$; therefore, 
\begin{equation}\label{main q3 ineq3}
\begin{aligned}
r_1r_2r_3\approx |E| \lesssim V_K(F) \leq  |K|\leq (2\cdot 3)^3 r_1r_2r_3.
\end{aligned}
\end{equation}
Therefore, by combining the equalities \eqref{q3 main ineq1}, \eqref{q3 main ineq2} and \eqref{main q3 ineq3}, we obtain $ r_1 r_2 r_3 \approx r_3^{3-q+p} $.
\end{proof}

\begin{coro} 
\label{q3 estimate} 
For $p\in [0,1)$, $q>2+p$ and $\lambda>1$,  if $K\in\mathcal{K}^3_o$ 
satisfies \eqref{n=3-density-bounded},  then
$$
r_3(K)^{q-p} \gtrsim \frac{r_3(K)}{r_1(K)}, \mbox{ \ \ }
r_3(K) \gtrsim 1\mbox{ and }r_1(K) \lesssim 1.
$$
\end{coro}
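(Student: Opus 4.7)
The plan is to derive all three estimates by elementary algebraic manipulation of the conclusion $r_1r_2r_3 \approx r_3^{3-q+p}$ of Lemma~\ref{q3 main}, combined with the ordering $r_1\leq r_2\leq r_3$ from \eqref{r123} and the hypothesis $q-p>2$.

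\textbf{First inequality.} Since $r_2\leq r_3$, we have
$$r_1 r_3^2 \geq r_1 r_2 r_3 \gtrsim r_3^{3-q+p},$$
which rearranges directly to $r_3^{q-p}\gtrsim r_3/r_1$.

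\textbf{Second estimate.} Since both $r_1\leq r_3$ and $r_2\leq r_3$, we have
$$r_3^3 \geq r_1 r_2 r_3 \gtrsim r_3^{3-q+p},$$
so $r_3^{q-p}\gtrsim 1$. Because $q-p>2>0$, this forces $r_3\gtrsim 1$.

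\textbf{Third estimate.} Using $r_1\leq r_2$ and Lemma~\ref{q3 main},
$$r_1^2 r_3 \leq r_1 r_2 r_3 \lesssim r_3^{3-q+p},$$
that is, $r_1^2\lesssim r_3^{2-q+p}$. Since $2-q+p<0$ and the previous step gives $r_3\gtrsim 1$, the quantity $r_3^{2-q+p}$ is bounded from above by a constant depending only on $\lambda,p,q$, and hence $r_1\lesssim 1$.

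No single step is an obstacle here: the whole corollary is a short consequence of Lemma~\ref{q3 main} together with the ordering of the John half-axes and the sign of $2-q+p$ enforced by the hypothesis $q>2+p$.
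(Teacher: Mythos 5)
Your proof is correct and follows essentially the same route as the paper: everything is an algebraic consequence of $r_1r_2r_3\approx r_3^{3-q+p}$, the ordering $r_1\leq r_2\leq r_3$, and $q-p>2$. The paper states the conclusions even more tersely, but the underlying manipulations are the same; your version simply spells out the intermediate steps $r_3^3\geq r_1r_2r_3$ and $r_1^2r_3\leq r_1r_2r_3$ explicitly, and correctly feeds $r_3\gtrsim 1$ into the bound for $r_1$.
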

\begin{proof} Let $r_i=r_i(K)$ for $i=1,2,3$.
    Since $r_1 \leq r_2 \leq r_3$, \eqref{q3 main equality} yields that $r_3^{q-p} \gtrsim \frac{r_3}{r_1}$ and $r_1^{q-p} \lesssim 1$, which in turn imply that $r_3 \gtrsim 1$ and $r_1 \lesssim 1$.
\end{proof}


Given Corollary~\ref{q3 estimate}, which is a consequence of Lemma~\ref{q3 main}, Theorem~\ref{n3-ppos--C0-estimate} will follow from the following statement:

\begin{prop}
\label{good-inequality-prop}
For $p\in [0,1)$, $q>2+p$ and $\lambda>1$,  if $K\in\mathcal{K}^3_o$ 
satisfies \eqref{n=3-density-bounded},  then
\begin{equation}
\label{good inequality}
    r_1(K)\gtrsim r_3(K).
\end{equation}
\end{prop}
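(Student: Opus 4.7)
My plan is to run an argument that parallels Lemma~\ref{q3 main} but with the role of the longest axis $e_3$ replaced by the shortest axis $e_1$, and then combine the resulting estimate with Lemma~\ref{q3 main} itself to force $r_1 \gtrsim r_3$. Throughout, I would argue by contradiction, assuming $\epsilon := r_1(K)/r_3(K)$ is arbitrarily small.

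Setup. First, by applying reflections through the coordinate hyperplanes (which preserve $S^2$, $\mathcal{H}^2$, and the hypothesis $\lambda^{-1}\leq f\leq \lambda$), I may assume WLOG that $X_1, X_2, X_3\geq 0$; in particular $X_1\in[0,3r_1]$. Then I define $G_1:=\{x\in\partial K:\langle x,e_1\rangle\geq X_1+\frac14 r_1\}$ and $F_1:=\nu_K(G_1)$. The same convexity argument as in Lemma~\ref{q3 main}, now using the point $Y:=X+r_1 e_1\in\partial E\subset K$ and the inequality $\langle Y-y,\nu_y\rangle\leq 0$ for $y\in\partial K\setminus G_1$, gives $\nu_{y,1}\leq \frac{8r_3}{r_1}(|\nu_{y,2}|+|\nu_{y,3}|)$. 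By contraposition
\[
F_1\ \supset\ \mathcal{N}\ :=\ \Bigl\{\nu\in S^2_+:\ \nu_1\geq C\tfrac{r_3}{r_1}(\nu_2+\nu_3)\Bigr\}
\]
for a large absolute constant $C$, and hence $\mathcal{H}^2(\mathcal{N})\gtrsim (r_1/r_3)^2=\epsilon^2$.

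Pointwise bounds on $\mathcal{N}$. For $\nu\in\mathcal{N}$, the point $x=Dh_K(\nu)\in G_1$ satisfies $x_1\in[r_1/4,6r_1]$ (using $X_1\geq 0$). The narrow-cone constraint forces $|x_2\nu_2+x_3\nu_3|\leq 6r_3(|\nu_2|+|\nu_3|)\leq \frac{6}{C}r_1\nu_1$, so choosing $C$ large enough yields
\[
h_K(\nu)=\langle x,\nu\rangle \approx r_1,\qquad \|Dh_K(\nu)\|\geq h_K(\nu)\gtrsim r_1, \qquad \|Dh_K(\nu)\|\leq \mathrm{diam}(K)\lesssim r_3.
\]
Hence on $\mathcal{N}$ the integrand $h_K^{p}\,\|Dh_K\|^{3-q}$ appearing in $dV_K$ can be two-sidedly estimated in terms of $r_1$ and $r_3$, exactly as in the proof of Lemma~\ref{q3 main}.

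Running the two-sided estimate and combining with Lemma~\ref{q3 main}. Mirroring equations \eqref{q3 main ineq1}--\eqref{main q3 ineq3}, I would upper-bound $V_K(\mathcal{N})$ by using $h_K\leq\|Dh_K\|$ and the upper bound $\|Dh_K\|\lesssim r_3$, and lower-bound $V_K(\mathcal{N})$ by the pointwise lower bounds on $h_K$ and $\|Dh_K\|$ on $\mathcal{N}$; these give a sandwich of the form $r_1^\alpha r_3^\beta\,\epsilon^2\lesssim V_K(\mathcal{N})\lesssim r_1^{\alpha'}r_3^{\beta'}\epsilon^2$, whose precise exponents depend on $\mathrm{sgn}(3-q)$. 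The corresponding geometric lower bound for $V_K(\mathcal{N})=\widetilde{V}_K(\nu_K^{-1}(\mathcal{N}))$ comes from observing that the patch $x(\mathcal{N})\subset G_1$ projects onto a $(e_2,e_3)$-region whose size is controlled by the narrowness of $\mathcal{N}$, so the cone from $O$ over this patch has volume $\gtrsim r_1\cdot(\text{area of the patch})$. Combining these estimates with the basic identity $r_1r_2r_3\approx r_3^{3-q+p}$ from Lemma~\ref{q3 main} collapses to a power inequality of the form $(r_1/r_3)^\gamma\gtrsim 1$ with $\gamma>0$, which is incompatible with $\epsilon\to 0$ and thus forces $r_1\gtrsim r_3$.

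Main obstacle. The delicate step is extracting the sharp geometric lower bound for $V_K(\mathcal{N})$ in the narrow-cone regime: the trivial bound $V_K(\mathcal{N})\leq |K|$ loses the factor $\epsilon^2$ entirely, while the matching pointwise estimates from the integral form of $V_K(\mathcal{N})$ alone produce the same scaling on both sides and yield no contradiction. The correct geometric bound must exploit that the image $x(\mathcal{N})\subset G_1$ is pinned near the actual ``tip'' of $K$ in the $e_1$-direction (where $\|x\|\approx r_1$), so that the cone from $O$ to this patch really captures only an $\epsilon$-fraction of $K$ rather than essentially all of $|K|$. Making this precise may require a dyadic decomposition of $\mathcal{N}$ by the magnitude of $\|Dh_K\|$, together with the Caffarelli-type $C^{1,\alpha}$ regularity from Lemma~\ref{Gauss-map-bijective} to rule out boundary pathologies. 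If a purely quantitative argument proves elusive, a compactness/degeneration fallback is available: if some sequence $K_m$ violates the claim with $r_3(K_m)$ bounded, the Blaschke limit is a convex body contained in $e_1^\perp$, and then the lower bound $\widetilde{C}_{p,q,K_m}\geq \lambda^{-1}\mathcal{H}^2$ on any open set disjoint from $\{\nu_1=0\}\cup\{\pm e_1\}$ cannot survive the limit since $h_{K_\infty}$ is positive there while $dS_{K_\infty}$ vanishes away from the great circle and the poles.
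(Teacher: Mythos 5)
Your plan is genuinely different from the paper's route, and it contains a gap that you yourself flag but do not close. The paper proves Proposition~\ref{good-inequality-prop} by an elaborate case analysis (Case I: $r_1\le r_2\ll r_3$, Case II: $r_1\ll r_2\approx r_3$, each split further by whether $\mathrm{dist}(O,\partial I)$ or $\mathrm{dist}(O,\partial K')$ is $\gtrsim r_3$ or $\ll r_3$), occupying Lemmas~\ref{q3 case I subcase i lemma}--\ref{q3 case II subcase ii.3 lemma}. Your proposal is a single sweeping argument mirroring Lemma~\ref{q3 main} with $e_1$ in place of $e_3$. That is an appealing symmetry, but it breaks exactly where you say it does, and the breakdown is not a technicality that a ``dyadic decomposition'' obviously fixes.

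The specific problem: in Lemma~\ref{q3 main} the set of normals $F$ occupies a fixed positive fraction of $S^2$ independent of the shape of $K$, so the inequality $V_K(F)\gtrsim |E|\approx r_1r_2r_3$ is available as an \emph{independent} geometric input, which meshes with the two integral estimates \eqref{q3 main ineq1}--\eqref{q3 main ineq2} to pin down $r_1r_2r_3\approx r_3^{3-q+p}$. In your narrow cone $\mathcal{N}$ near $e_1$, the measure $\mathcal{H}^2(\mathcal{N})\approx\epsilon^2$ \emph{already} carries the smallness you hope to contradict, the quantity $\|Dh_K\|$ ranges over essentially the whole interval $[r_1,r_3]$ on $\mathcal{N}$ (this is what happens even for an ellipsoid), and the integral formula for $V_K(\mathcal{N})$ therefore yields upper and lower bounds that differ by a factor $(r_3/r_1)^{|3-q|}$ with the \emph{same} $\epsilon^2$ in front. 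You thus need a geometric lower bound for $V_K(\mathcal{N})=\widetilde V_K(\nu_K^{-1}(\mathcal{N}))$ that captures more than a tautological fraction of $|K|$, but $\nu_K^{-1}(\mathcal{N})$ can sit on a sliver of $\partial K$ whose cone over $O$ is not \emph{a priori} comparable to $|E|$: the volume you capture depends delicately on where $O$ sits relative to $\partial K'$ and whether $r_2\approx r_3$, which is precisely the dichotomy the paper is forced to introduce. Your proposal does not supply this bound.

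Your compactness fallback also has a hole: you assume $r_3(K_m)$ is bounded, but Corollary~\ref{q3 estimate} only gives $r_3\gtrsim 1$ and $r_1\gtrsim r_3^{1-(q-p)}$, leaving $r_3\to\infty$ with $r_1\to 0$ entirely possible. The paper's Lemma~\ref{q3 case II subcase ii.3 lemma} handles this by taking the Blaschke limit of the blown-down bodies $\widetilde K_m=K_m/r_3(K_m)$; without that rescaling your limiting argument does not apply. Even in the bounded-$r_3$ regime, the conclusion you draw needs the inequalities \eqref{q3-angle-estimate+}--\eqref{q3-angle-estimate-} versus \eqref{n=3-intfm-upper-F+}--\eqref{n=3-intfm-upper-F-} which the paper obtains only after a rather involved analysis of the limit shape $K_\infty$ and of Lemmas~\ref{q3 case II subcase ii.1 lemma}--\ref{q3 case II subcase ii.2 lemma}; ``$dS_{K_\infty}$ vanishes away from the poles'' alone does not rule out the measure concentrating on the great circle $\{\nu_1=0\}$ in a way compatible with $f\geq\lambda^{-1}$ near that circle. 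In short, the idea of attacking the short axis symmetrically is natural, but the missing geometric lower bound for $V_K(\mathcal{N})$ and the unaddressed $r_3\to\infty$ regime are genuine gaps, not afterthoughts, and the paper's case-splitting exists precisely to circumvent them.
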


\section{Proof of Proposition~\ref{good-inequality-prop}, and in turn of Theorem~\ref{n3-ppos--C0-estimate}}
\label{sec-n3-ppos--C0-estimate}

In this section, we fix $p\in [0,1)$, $q>2+p$ and $\lambda>1$, and keep using the notation introduced in Section~\ref{sec-n3-ppos--C0-estimate-basic}. Indirectly, we suppose that Proposition~\ref{good-inequality-prop} does not hold, and hence  one of Case I or Case II occurs: \\

\noindent {\bf Case I}: 
$$
r_1 \leq r_2\ll r_3
$$
may hold for $r_i=r_i(K)$, $i=1,2,3$; that is, for arbitrary large $M>1$, there exists a solution $K\in\mathcal{K}^3_o$ to the equation \eqref{n=3-density-bounded}  satisfying that 
\begin{equation}\label{q3 case I}
  \frac{r_3(K)}{r_2(K)}>M.
\end{equation}

\noindent {\bf Case II}: 
$$
r_1 \ll r_2 \approx r_3
$$
may hold for $r_i=r_i(K)$, $i=1,2,3$; that is, for a constant $C_1>1$ depending only on $\lambda$, $p$, $q$  such that for any $M>1$ large, there exists a solution $K$ to the equation \eqref{n=3-density-bounded} satisfying that
\begin{equation}\label{q3 case II}
 \frac{r_2(K)}{r_1(K)}>M\ \text{and }\ \frac{r_3(K)}{r_2(K)}\leq C_1.
\end{equation}

\subsection{Lemmas needed for the proof of Proposition~\ref{good-inequality-prop} under the assumption as in Case I ($r_1 \leq r_2\ll r_3$)}

For the $K\in\mathcal{K}^3_o$ 
satisfying \eqref{n=3-density-bounded} and \eqref{q3 case I}, let $I=[a,b]e_3$ be the orthogonal projection of $K$ on the $e_3$-axis where $a\leq 0\leq b$. Without loss of generality, we may assume 
$$
|a| \geq b\geq 0.
$$
Now Case I can be subdivided into the following two subcases:

\medskip
\noindent {\it Subcase (i)}: for some constant $c_0\in(0,\frac{1}{3})$, for any $M>0$, there exists a convex body 
$K\in\mathcal{K}^3_o$ satisfying \eqref{n=3-density-bounded},  \eqref{q3 case I} and  
\begin{equation}\label{q3 case I subcase i condition}
    \mbox{dist}(O,\partial I)\geq c_0r_3.
\end{equation}
{\it Subcase (ii)}: for any $M>1$, there exists a convex body $K\in\mathcal{K}^3_o$ satisfying \eqref{n=3-density-bounded}, \eqref{q3 case I} and 
\begin{equation}\label{q3 case I subcase i condition 2}
  \mbox{dist}(O,\partial I)<  \frac{r_3}{M}.
\end{equation}

\begin{lemma}\label{q3 case I subcase i lemma}
Subcase (i) in Case I does not occur.
\end{lemma}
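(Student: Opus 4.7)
The goal is to derive a contradiction by exhibiting a region on $S^2$ on which the bound $\widetilde{C}_{p,q,K}\lesssim \mathcal{H}^2$ forced by $f\leq \lambda$ is inconsistent with the geometric consequences of the combined hypotheses $r_3/r_2>M$ (Case I) and $\min(|a|,b)\geq c_0 r_3$ (Subcase~(i)).

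First I would record a two-sided bound on the support function. Subcase~(i) combined with $|a|\geq b$ gives $\pm c_0 r_3 e_3\in K$, and hence the pointwise lower bound $h_K(v)\geq c_0 r_3|\langle v,e_3\rangle|$ on $S^2$. On the other hand, the upper containment $K\subset X+3(E-X)$, together with the constraint $|X_i|\leq 3r_i$ forced by $O\in K$, yields $h_K(v)\lesssim r_2+r_3|\langle v,e_3\rangle|$. In particular, on the equatorial belt $\omega=\{v\in S^2:|\langle v,e_3\rangle|\leq 1/2\}$ we have $h_K(v)\lesssim r_2$, while $\nu_K^{-1}(\omega)\subset \partial K$ spans the ``cylindrical side'' of $K$: a tube of transverse thickness $\sim r_2$ and height $\gtrsim r_3$ (the height comes from Subcase~(i), since $K$ contains both $be_3$ and $-|a|e_3$ with $b,|a|\geq c_0 r_3$).

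The heart of the argument is to estimate $\widetilde{C}_{p,q,K}(\omega)$ from below via the integral representation \eqref{lp-dual-curvature-omega-boundary}. On the portion of the tube with $|x_3|\gtrsim r_2$ one has $\|x\|\approx|x_3|$ and $\langle x,\nu_K(x)\rangle=h_K(\nu_K(x))\approx r_2$, so integrating $\|x\|^{q-3}\langle x,\nu_K(x)\rangle^{1-p}$ over the tube (with area element $\sim r_2\,d\theta\,dx_3$) heuristically yields
\[
\widetilde{C}_{p,q,K}(\omega)\gtrsim r_2^{2-p}\int_{r_2}^{r_3}z^{q-3}\,dz \gtrsim r_2^{2-p}\cdot r_3^{q-2},
\]
using $q>2$. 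The bound $\widetilde{C}_{p,q,K}(\omega)\leq \lambda\mathcal{H}^2(\omega)\lesssim 1$ then forces $r_2^{2-p} r_3^{q-2}\lesssim 1$. Combining with the consequence $r_2\geq r_3^{(2-q+p)/2}$ of Lemma~\ref{q3 main} (which uses $r_1\leq r_2$), one obtains $r_3^{\kappa}\lesssim 1$ with $\kappa=p(q-p)/[2(2-p)]$. Since $q-p>2$, the exponent $\kappa$ is strictly positive whenever $p>0$, contradicting $r_3\to\infty$ (which follows from Case~I together with Corollary~\ref{q3 estimate}).

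The principal obstacle is the boundary case $p=0$, for which $\kappa$ degenerates to zero and the scale-based contradiction fails. Handling this case probably requires a refinement, such as replacing $\omega$ by the narrower annular region $\{v:|\langle v,e_3\rangle|\sim r_2/r_3\}$, where the lower and upper bounds on $h_K$ are of comparable magnitude, or pairing the above estimate with a two-sided version of the proof of Lemma~\ref{q3 main} applied to both the top and bottom pieces of $\partial K$ (both of which are substantial thanks to Subcase~(i)). Making the heuristic cylindrical-side lower bound fully rigorous — in particular controlling contributions from points of $\partial K$ whose Gauss image lies in $\omega$ but whose $x_3$-coordinate is small — is the other technical subtlety.
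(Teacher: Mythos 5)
Your approach is genuinely different from the paper's, but it has a real gap. The paper localizes to the Gauss image $F_1$ of the middle slab $G_1 = \{x\in\partial K:\ \mbox{dist}(x_3e_3,\partial I)\geq \sigma_0 r_3\}$, and the key observation (which you don't use) is that in Case I the normals over $G_1$ are forced into a band $\{|\nu_3|\lesssim 1/M\}$ because the transverse width $\lesssim r_2$ is tiny compared to the depth $\sigma_0 r_3$ available on either side; hence $\mathcal{H}^2(F_1)\lesssim 1/M\to 0$. At the same time, $V_K(F_1)\gtrsim r_1r_2r_3$ and $\|Dh_K\|\approx r_3$ on the relevant part, giving $\widetilde C_{p,q,K}(F)\gtrsim r_3^{q-p-3}\,r_1r_2r_3\gtrsim 1$ by Lemma~\ref{q3 main}; this lower bound of order $1$ contradicts $\widetilde C_{p,q,K}(F)\leq\lambda\mathcal{H}^2(F)\to 0$ for \emph{all} $p\in[0,1)$. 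By contrast, your region $\omega=\{|\langle v,e_3\rangle|\leq 1/2\}$ has $\mathcal{H}^2(\omega)$ bounded away from zero, so a contradiction with $f\leq\lambda$ requires $\widetilde C_{p,q,K}(\omega)\to\infty$, which is structurally a stronger demand and is exactly what fails at $p=0$.

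Beyond the acknowledged $p=0$ gap, the central heuristic estimate is also off: you take the transverse scale of the tube to be $\sim r_2$ and $\langle x,\nu_K(x)\rangle\approx r_2$ uniformly on cross sections, but in Case I the cross sections are ellipses with half-axes $\sim r_1, r_2$ and $r_1\ll r_2$ is permitted. On the flat portions of each cross section (most of the perimeter in arclength) the support function is $\approx r_1$, not $r_2$, so the correct lower bound for the lateral integral is of the form $r_2\,r_1^{1-p}\,r_3^{q-2}$, not $r_2^{2-p}r_3^{q-2}$. (This corrected bound is $\approx (r_3/r_1)^p$ after inserting Lemma~\ref{q3 main}, which still degenerates to $O(1)$ at $p=0$, confirming that the obstruction is not merely a computational artifact.) Your closing suggestion to narrow the band to $\{|\langle v,e_3\rangle|\sim r_2/r_3\}$ is precisely the right move and is what the paper in effect does via $F_1\subset\{|\nu_3|\lesssim 1/M\}$; you would also need the cone-volume lower bound $V_K(F_1)\gtrsim r_1r_2r_3$ (proved in the paper by a John-ellipsoid containment argument borrowed from \cite{CFL22}) and the removal of the small neighborhood of $\{x_3=0\}$ (the set $G_2$ in the paper) to ensure $\|Dh_K\|\approx r_3$. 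As written, the proposal does not constitute a proof.
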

\begin{proof}
In this argument, the implied constants (independent of $M$) in $\gtrsim$ and $\approx$ depend on the $c_0$ in \eqref{q3 case I subcase i condition} besides $\lambda,p,q$.
Indirectly, we suppose that subcase (i) in Case I occurs, take $\sigma_0=\frac{1}{3}c_0$, and define
\begin{eqnarray*}
G_{1}:=\{x=(x_1,x_2,x_3)\in \partial K: \mbox{dist} (x_3e_3, \partial I)\geq \sigma_0r_3\},
\end{eqnarray*}
\begin{eqnarray*}
F_{1}:=\{\nu\in  S^2: \nu\in\nu_K(x)\ \text{for some}\ x\in G_{1}\},
\end{eqnarray*}
and see Figure \ref{q3 case I subcase i lemma figure}.

\begin{figure}
    \centering
    \includegraphics[width=1\linewidth]{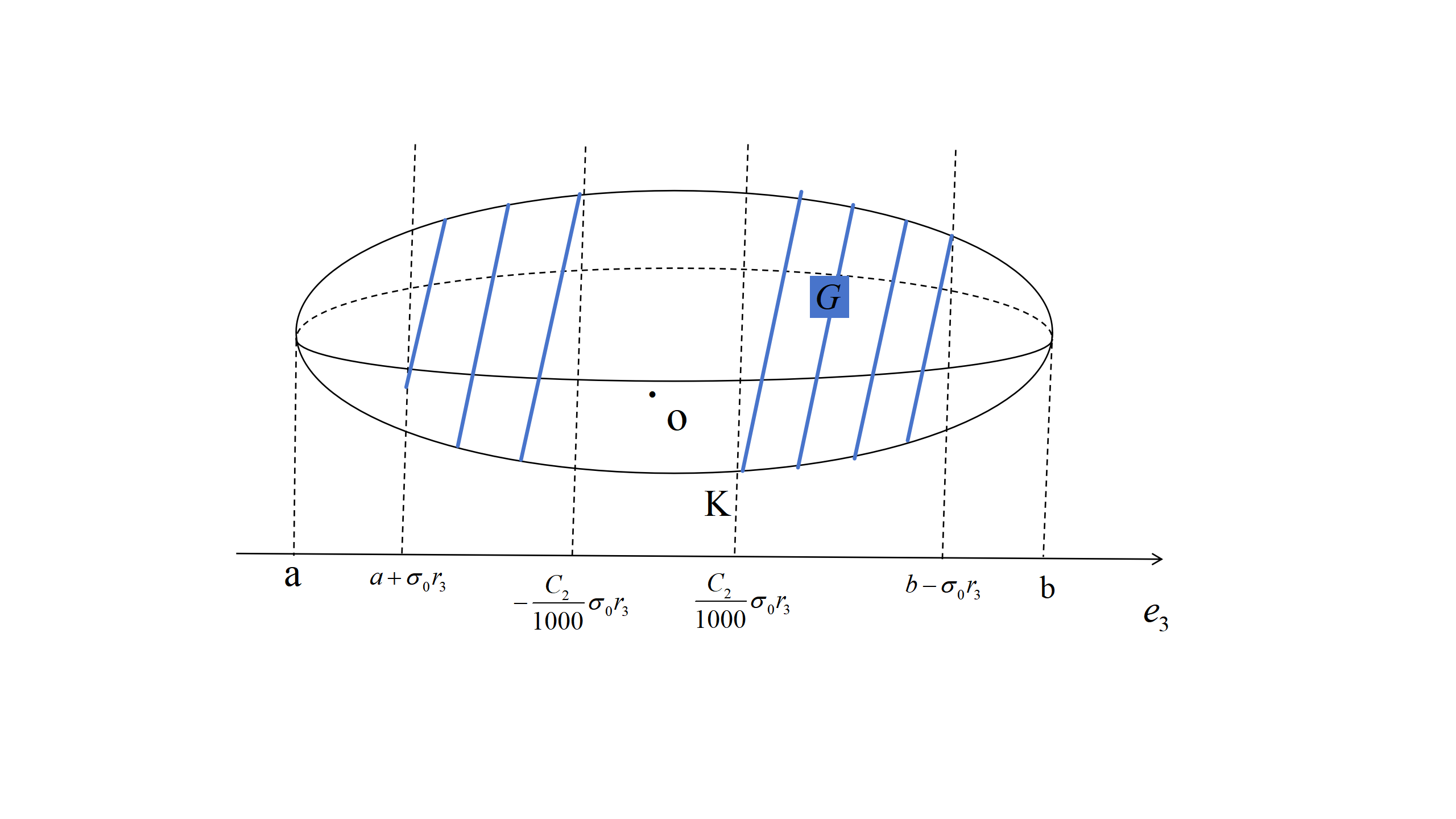}
    \caption{lemma \ref{q3 case I subcase i lemma}}
    \label{q3 case I subcase i lemma figure}
\end{figure}
On the one hand, for $\nu=(\nu_1,\nu_2,\nu_3) \in F_1$,  we have $|\nu_3|\leq \frac{96}{\sigma_0 M}$ by the convexity of $K$. Indeed, given any $x=(x_1,x_2,x_3)\in G_1$, by the definition of $G_1$ we can find $y=(y_1,y_2,x_3+\frac{1}{2}\sigma_0 r_3)\in K$ and $z=(z_1,z_2,x_3-\frac{1}{2}\sigma_0 r_3)\in K$. Note that \eqref{John n=2} yields that $|x_i|,|y_i|,|z_i|\leq 2\cdot 3\cdot r_i\leq \frac{12}{M}r_3$ for $i=1,2$. For any $\nu=(\nu_1,\nu_2,\nu_3)\in \nu_K(x)$, by convexity we have $\nu\cdot(y-x)\leq 0$. Hence,
\begin{equation}\label{normal 3 estimate}
    \frac{1}{2}\sigma_0\nu_3 r_3 \leq |(x_1 - y_1)\nu_1| + |(x_2 - y_2)\nu_2| \leq \frac{48}{M}r_3.
\end{equation}
Therefore, $\nu_3\leq \frac{96}{\sigma_0 M}$. Similarly, changing $y$ to $z$ in the above argument,  we have $\nu_3\geq -\frac{96}{\sigma_0 M}$. Thus, $|\nu_3|\leq \frac{96}{\sigma_0 M}$, and we deduce that
\begin{equation}\label{q3 case I subcase i ineq1}
\mathcal{H}^2(F_{1}) \lesssim \frac{96}{\sigma_0 M}\to 0
\end{equation}
as $M\to+\infty$.

On the other hand, the argument leading to \cite[estimate (3.7)]{CFL22} shows that $\bigcup\{{\rm conv}\{o,x\}:\,x\in G_1\}$ contains a translated copy of $\tilde{c}_0E$ where $\tilde{c}_0\in(0,1)$ depends  on $c_0,\lambda,p,q$, and hence (cf. \eqref{tildeVKZ-def2})
\begin{equation}
\label{q3 case I subcase i volumn estimate}
V_K(F_{1})=\left|\bigcup\{{\rm conv}\{o,x\}:\,x\in G_1\}\right|\geq C_2r_1r_2r_3
\end{equation}
for some constant $C_2\in(0,1)$ independent of $M$.
Let 
$$
G_{2}:=\left\{x=(x_1,x_2,x_3)\in \partial K: |x_3|\leq \frac{C_2}{1000}\,\sigma_0r_3\right\},
$$
and 
$$
F_2:=\{\nu\in  S^2: \nu\in\nu_K(x)\ \text{for some}\ x\in G_2\},
$$
then $V_K(F_{2})\leq \frac{1}{2}C_2r_1r_2r_3$.

For $G=G_{1} \backslash G_{2}$, and
\begin{eqnarray*}
F:=\{\nu\in  S^2: \nu\in\nu_K(x)\ \text{for some}\ x\in G\},
\end{eqnarray*}
we have 
$$
\mathcal{H}^2(F)\leq \mathcal{H}^2(F_{1})\leq \frac{96}{\sigma_0 M}
$$
and (cf. \eqref{q3 case I subcase i volumn estimate})
$$
V_K(F) \geq \frac{C_2}{2}\,r_1r_2r_3.
$$
Moreover, for $\mathcal{H}^2$ a.e. $\nu \in F$, we have 
\begin{equation}\label{rho bound 2}
    12r_3 \geq \|Dh_K(\nu)\| \geq \frac{C_2}{1000}\,\sigma_0 r_3.
\end{equation}
Therefore, \eqref{n=3-f-density} yields
\begin{equation}
\label{q3 case I subcase i ineq2}
\begin{aligned}
\mathcal{H}^2(F_{1}) &\geq \mathcal{H}^2(F) \approx \widetilde{C}_{p,q,K}(F)=3\int_{F}h_K^{-p}||Dh_K||^{q-3}dV_K(\nu) \\
&\gtrsim r^{q-p-3}_3V_K(F)
\gtrsim r_3^{q-p-3}r_1r_2r_3 
\gtrsim 1
\end{aligned}
\end{equation}
where the above inequalities follow from \eqref{rho bound 2}, \eqref{q3 case I subcase i volumn estimate} and \eqref{q3 main equality}, respectively.
When M is sufficiently large, \eqref{q3 case I subcase i ineq1} contradicts \eqref{q3 case I subcase i ineq2}, so subcase (i) of Case I does not occur.
\end{proof}

Now we start to rule out subcase (ii) of Case I. Recall that in subcase (ii), \eqref{q3 case I} and
\eqref{q3 case I subcase i condition 2}  hold, namely  $\frac{r_3(K)}{r_2(K)}>M$ and
 $\mbox{dist}(O,\partial I)< \frac{r_3}{M}$. We subdivide subcase (ii) of Case I further, as the quotient $\frac{r_2(K)}{r_1(K)}$ is bounded in Lemma~\ref{q3 case I subcase ii.1 lemma},  and the quotient $\frac{r_2(K)}{r_1(K)}$ can be arbitrary large in Lemma~\ref{q3 case I subcaseii.2 lemma}.

\begin{lemma}\label{q3 case I subcase ii.1 lemma}
In subcase (ii) of Case I, the case $r_1 \approx r_2\ll r_3$  does not occur; namely, the assumption that there exists some $C_3>1$ such that for any $M>1$ large, the equation \eqref{n=3-density-bounded} has a solution $K\in\mathcal{K}^3_o$  satisfying that
\begin{equation}
\label{CaseIsubcaseii-boundedr2r1}
\frac{r_3(K)}{r_2(K)}>M, \mbox{ and }\mbox{dist}(O,\partial I)<\frac{r_3}{M}, \mbox{ and }
\frac{r_2(K)}{r_1(K)}<C_3
\end{equation}
leads to a contradiction.
\end{lemma}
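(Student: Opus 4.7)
The plan is to repeat the scheme of Lemma~\ref{q3 case I subcase i lemma}, but with the probing slab of $\partial K$ re-centred at the John-centre height $X_3$ instead of at $x_3=0$, since in subcase~(ii) the origin now sits near the top endpoint of $I$ rather than in its middle. Suppose indirectly that \eqref{CaseIsubcaseii-boundedr2r1} holds for arbitrarily large $M$, with $|a|\ge b$. From $X_3+r_3\le b<r_3/M$ one gets $X_3\le -r_3/2$ for $M\ge 2$, while $X_3-r_3\ge a\ge -6r_3$ (from \eqref{John n=2}) forces $X_3\ge -5r_3$. Also, $r_1\le r_2\le C_3 r_1$ combined with Lemma~\ref{q3 main} gives $r_1^2\approx r_1 r_2\approx r_3^{2-q+p}$; plugging $r_2<r_3/M$ into this yields $r_3^{q-p}\gtrsim M^2$, so $r_3\to\infty$ and
\[
\frac{r_1}{r_3}\approx r_3^{-(q-p)/2}\longrightarrow 0\qquad\text{as }M\to\infty.
\]

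Introduce
\[
G:=\{x\in\partial K:\,x_3\in[X_3-r_3/4,\,X_3+r_3/4]\}\qquad\text{and}\qquad F:=\nu_K(G).
\]
For $x\in G$ and $\nu\in\nu_K(x)$, apply $\langle y_\pm-x,\nu\rangle\le 0$ to the John-ellipsoid points $y_\pm:=(X_1,X_2,X_3\pm r_3)\in E\subset K$; using $|x_i-X_i|\le 3r_i$ ($i=1,2$), $r_2\le C_3 r_1$ and $|X_3\pm r_3-x_3|\ge 3r_3/4$, just as in Lemma~\ref{q3 case I subcase i lemma} this yields $|\nu_3|\lesssim r_1/r_3$. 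Hence $F$ lies in a spherical band of width $O(r_1/r_3)$ about the equator, so
\[
\mathcal{H}^2(F)\lesssim r_1/r_3.
\]
For the cone-volume lower bound, the John inclusions force the horizontal extent of $K$ to be $\approx r_1$ at every height in $[X_3-r_3/4,\,X_3+r_3/4]$; hence the radial projection $\pi_{S^2}(G)$ has area $\approx (r_1/r_3)^2$ while $\varrho_K\approx r_3$ on it, so \eqref{tildeVKZ-def2} gives
\[
V_K(F)\ge\widetilde{V}_K(G)=\tfrac{1}{3}\int_{\pi_{S^2}(G)}\varrho_K^3\,d\mathcal{H}^2\;\gtrsim\; r_1^2 r_3\approx r_1 r_2 r_3.
\]

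To close, combining $p\ge 0$, $h_K\le\|Dh_K\|$ (see \eqref{DhK-rhoK}), and $\|Dh_K(\nu)\|=\|x\|\approx r_3$ on $F$ (since $|x_3|\approx r_3$ dominates for $x\in G$), \eqref{n=3-f-density} yields
\[
\widetilde{C}_{p,q,K}(F)=3\int_F h_K^{-p}\|Dh_K\|^{q-3}\,dV_K\;\gtrsim\; r_3^{q-3-p}V_K(F)\;\gtrsim\; r_3^{q-3-p}\cdot r_3^{3-q+p}=1
\]
by Lemma~\ref{q3 main}, whereas $\widetilde{C}_{p,q,K}(F)\le\lambda\,\mathcal{H}^2(F)\lesssim r_1/r_3\to 0$ by \eqref{n=3-density-bounded}, giving the desired contradiction for $M$ large. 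The main technical obstacle I anticipate is the rigorous lower bound on $\widetilde{V}_K(G)$: one has to argue that $\pi_{S^2}(G)$ genuinely occupies an annular region of area $\gtrsim (r_1/r_3)^2$ rather than collapsing onto a lower-dimensional set, which should follow quantitatively from the John inclusion in the spirit of the estimate leading to \cite[(3.7)]{CFL22} invoked in Lemma~\ref{q3 case I subcase i lemma}, with additional care needed in the case $O\in\partial K$ to ensure the formula \eqref{tildeVKZ-def2} applies to $G$.
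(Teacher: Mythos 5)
The overall scheme you follow (probe a slab of $\partial K$, show its Gauss image is spherically small while its dual curvature measure is bounded below) mirrors the paper's Lemma~\ref{q3 case I subcase i lemma}, but the key step
\[
V_K(F)\ \ge\ \widetilde{V}_K(G)\ \gtrsim\ r_1 r_2 r_3
\]
is a genuine gap, not a mere technicality. In subcase~(i) (Lemma~\ref{q3 case I subcase i lemma}), the hypothesis $\operatorname{dist}(O,\partial I)\ge c_0 r_3$ places $O$ at a height \emph{inside} the slab, so that $\bigcup\{\operatorname{conv}\{O,x\}:x\in G_1\}$ is a solid region containing a translate of $\tilde c_0 E$; this is exactly what the cited estimate \cite[(3.7)]{CFL22} delivers and what makes the argument work there. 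In subcase~(ii), however, $b<r_3/M$ forces $X_3\le -r_3/2$, so $O$ lies strictly \emph{above} the height range $[X_3-r_3/4,\,X_3+r_3/4]$ of your $G$. Then $\bigcup\{\operatorname{conv}\{O,x\}:x\in G\}$ is an annular shell rather than a solid cone, and the John inclusion alone does not prevent it from degenerating. Indeed, take $K$ equal to a smoothed-apex circular cone with apex near $O$, base radius $R$, and height $H$ with $R\ll H$: then $r_1(K)=r_2(K)\approx R\ll H\approx r_3(K)$, the John inclusion and $\operatorname{dist}(O,\partial I)<r_3/M$ all hold, and yet the lateral surface in your slab is nearly ruled from $O$, so $\langle x,\nu_K(x)\rangle=h_K(\nu_K(x))$ is close to $0$ there. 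Since $\widetilde V_K(G)=\frac13\int_{G\cap\partial'K}\langle x,\nu_K(x)\rangle\,d\mathcal H^2$, this can be made $\ll r_1 r_2 r_3$ even though $\mathcal H^2(G)\approx r_1 r_3$ and the cross-sections are all $\approx r_1\times r_2$; equivalently, $\pi_{S^2}(G)$ collapses onto a circle and has area $\ll (r_1/r_3)^2$. So your assertion that the needed bound ``should follow quantitatively from the John inclusion in the spirit of \cite[(3.7)]{CFL22}'' is not correct: that argument is the one piece of Lemma~\ref{q3 case I subcase i lemma} that genuinely uses $O$ being deep inside $I$, and it is unavailable here. (The extra constraint $\tfrac1\lambda\le f\le\lambda$ does ultimately exclude such $K$ — that is what the lemma asserts — but you would need to bring that constraint into the proof of the cone-volume lower bound, and as written you do not.)

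The paper's proof of Lemma~\ref{q3 case I subcase ii.1 lemma} avoids this pitfall by reversing the roles of ``small'' and ``large'': it fixes the macroscopic spherical cap $S^2\setminus\mathbb S_{1/4}=\{\nu_3>1/4\}$ and shows its dual curvature mass is tiny. Concretely, with $G_1=\{x\in\partial K:x_3\le b-\delta r_3\}$ and $\delta=384r_2/r_3$, one gets $\nu_K(G_1)\subset\mathbb S_{1/4}$; hence for $\nu\in S^2\setminus\mathbb S_{1/4}$ the boundary point $Dh_K(\nu)$ lies in the small cap $\partial K\setminus G_1$ near the top of $I$, giving $\|Dh_K(\nu)\|\lesssim r_3/M$, while $\mathcal H^2(\partial K\setminus G_1)\lesssim r_1^2$. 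Feeding these upper bounds into $h_K^{1-p}\|Dh_K\|^{q-3}\,dS_K$ and using $h_K\le\|Dh_K\|$, $q>2+p$, and Lemma~\ref{q3 main} gives $\widetilde C_{p,q,K}(S^2\setminus\mathbb S_{1/4})\lesssim M^{-(q-p-2)}$, contradicting the constant positive lower bound $\frac1\lambda\,\mathcal H^2(S^2\setminus\mathbb S_{1/4})$. This argument does not need any radial cone-volume lower bound and works regardless of whether $K$ degenerates to a near-cone near $O$, which is precisely why the paper changes tactics between subcase~(i) and subcase~(ii).
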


\begin{proof}
Assume that \eqref{CaseIsubcaseii-boundedr2r1} holds in Case I subcase (ii), and let $r_i=r_i(K)$, $i=1,2,3$. 
In this argument, the implied constants in $\gtrsim$ and $\approx$ depend on the $C_3$ in \eqref{CaseIsubcaseii-boundedr2r1} besides $\lambda,p,q$, and hence these constants are still independent of $M$.
We consider
\begin{align*}
G_{1}:=&\{x=(x_1, x_2, x_3)\in \partial K: x_3 \leq b-\delta r_3\},\\
F_{1}:=&\{\nu\in  S^2: \nu\in\nu_K(x)\ \text{for some}\ x\in G_1\},
\end{align*}
where $\delta=\frac{384r_2}{r_3}$ converges to zero as M converges to $+\infty$. See Figure \ref{q3 case I subcase ii.1 lemma figure}.
\begin{figure}
    \centering
    \includegraphics[width=1\linewidth]{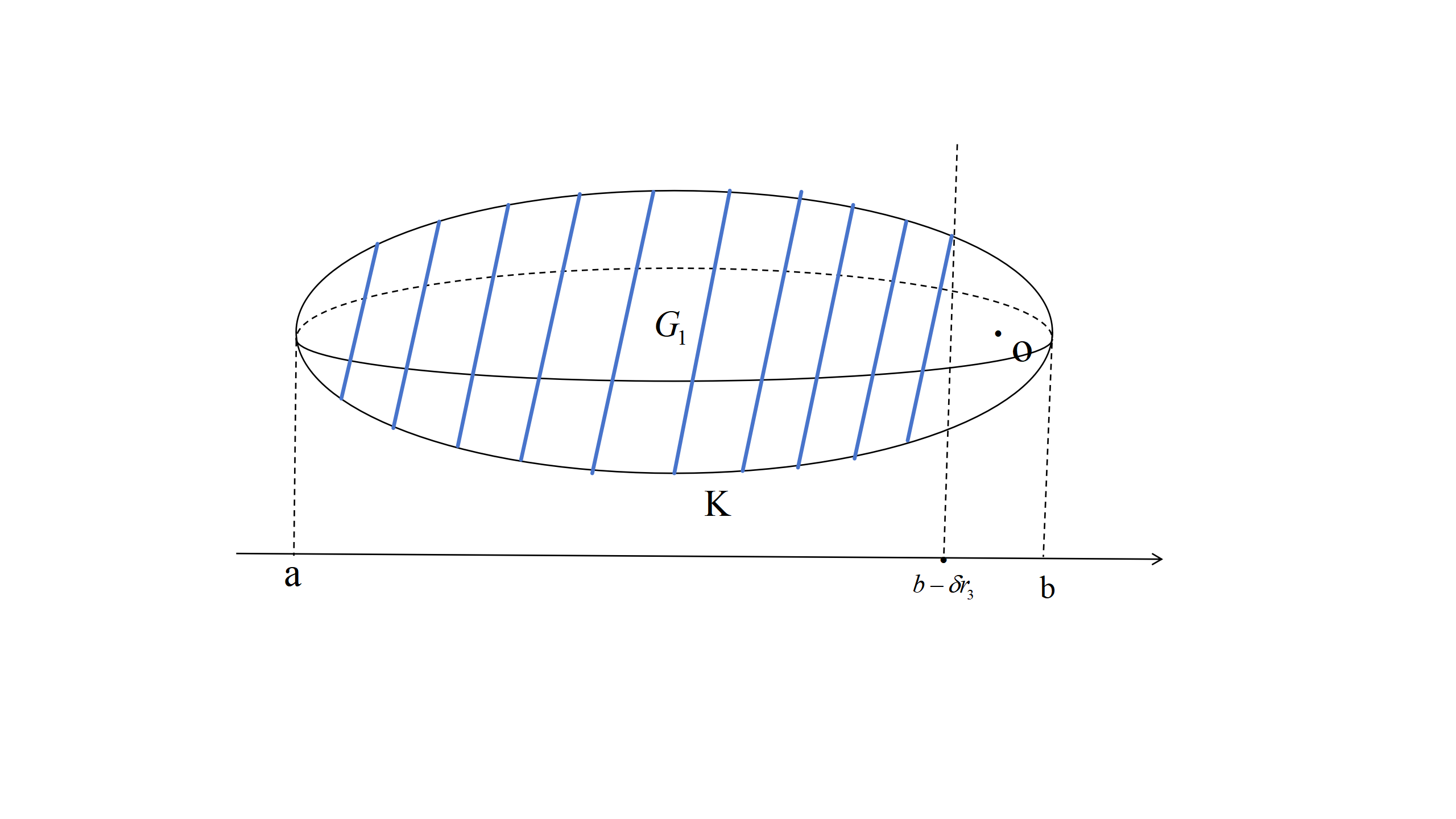}
    \caption{lemma \ref{q3 case I subcase ii.1 lemma}}\label{q3 case I subcase ii.1 lemma figure}
    \label{fig:enter-label}
\end{figure}
On the one hand, by the convexity of $K$, we have that 
$$
F_{1}\subset\mathbb{S}_{\frac{1}{4}}:=\left\{\nu\in  S^2: \langle\nu, e_3\rangle \leq {\frac{1}{4}}\right\}.
$$
Indeed, given any $x=(x_1, x_2, x_3)\in G_1,$ we can find $y=(y_1, y_2, x_3+\frac{1}{2}\delta r_3)\in K.$ Note that $|x_i|, |y_i|\leq 12r_2$ for $i=1, 2.$
For any $\nu=(\nu_1, \nu_2, \nu_3)\in \nu_K(x),$ we have $\langle \nu, y-x\rangle\leq 0$ by the convexity of $K$.
Hence, 
\begin{equation}\label{angle upper bound}
    \frac{1}{2}\delta \nu_3r_3\leq |(x_1-y_1)\nu_1|+|(x_2-y_2)\nu_2|\leq 48 r_2.
\end{equation}
We deduce that 
\begin{equation*}
    \nu_3\leq \frac{96r_2}{\delta r_3} = \frac{1}{4}
\end{equation*}
by $\delta =\frac{384r_2}{r_3},$ which in turn implies $F_{1} \subset\mathbb{S}_{\frac{1}{4}}.$
It follows that 
$$
G:=\left\{x\in \partial K: \nu_K(x) \in  S^2 \backslash \mathbb{S}_{\frac{1}{4}}\right\} \subset \partial K \backslash G_{1}.
$$

For $\mathcal{H}^2$ a.e. $\nu\in  S^2 \backslash \mathbb{S}_{\frac{1}{4}},$ we have that $Dh_K(\nu)\in \partial K \backslash G_1.$ Thus,  
 \eqref{q3 case I subcase i condition 2}, \eqref{q3 case I} and the definiton of $G_1,$ 
yield that 
\begin{equation}\label{small rho bound}
    ||Dh_K(\nu)|| \lesssim \frac{r_3}{M}.
\end{equation}
Since $\delta =\frac{384r_2}{r_3}$, using the assumption $r_1\approx r_2 \ll r_3$ (cf. \eqref{CaseIsubcaseii-boundedr2r1}), we have 
\begin{equation}\label{area upper bound}
    \mathcal{H}^{2}(G) \leq\mathcal{H}^{2}(\partial K \backslash G_{1}) \lesssim \delta r_2r_3+r_1r_2 \lesssim r^2_1.
\end{equation}

For $p\in[0,1)$ and $q > 2+p$ , we deduce from \eqref{n=3-f-density} that
\begin{equation}\label{5.4 upper bound}
\begin{aligned}
\widetilde{C}_{p,q,K}( S^2 \backslash \mathbb{S}_{\frac{1}{4}})&=\int_{ S^2 \backslash \mathbb{S}_{\frac{1}{4}}}h^{1-p}_K(\nu)||Dh_K||^{q-3}(\nu)dS_K(\nu)\\
&=\int_{ S^2 \backslash \mathbb{S}_{\frac{1}{4}}}\left(\frac{h_K}{||Dh_K||}\right)^{1-p}||Dh_K||^{q-p-2}dS_K\\
&\lesssim \left(\frac{r_3}{M}\right)^{q-p-2}\mathcal{H}^2(G)\\
&\lesssim  \left(\frac{1}{M}\right)^{q-p-2}r_3^{q-p-2}r^2_1\\
& \lesssim \left(\frac{1}{M}\right)^{q-p-2},
\end{aligned}
\end{equation}
where the first inequality follows from \eqref{small rho bound}, the second inequality follows from \eqref{area upper bound}, and the last inequality follows from the basic estimate \eqref{q3 main equality} and the assumption  $r_1 \approx r_2\ll r_3$.

Since
\begin{equation}\label{q3 case I subcase ii.1 lemma lower bound}
C_{p,q,K}( S^2 \backslash \mathbb{S}_{\frac{1}{4}})
=\int_{ S^2 \backslash \mathbb{S}_{\frac{1}{4}}}f(\nu)d\nu
\geq\frac{1}{\lambda}\mathcal{H}^{2}\left( S^2 \backslash \mathbb{S}_{\frac{1}{4}}\right)
\end{equation}
where the last expression is a constant depending only on $\lambda$,  for sufficiently large $M$, we find that \eqref{5.4 upper bound} contradicts \eqref{q3 case I subcase ii.1 lemma lower bound}. In turn, we conclude Lemma~\ref{q3 case I subcase ii.1 lemma} that assumes the condition $r_1 \approx r_2\ll r_3$ in subcase (ii) of Case I. 
\end{proof}

\begin{lemma}\label{q3 case I subcaseii.2 lemma}
In subcase (ii) of Case I, $r_1 \ll r_2\ll r_3$ does not occur; namely, the assumption that  for any $M>1$ large, there exists a solution $K\in\mathcal{K}^3_o$ to the equation \eqref{n=3-density-bounded} satisfying that
\begin{equation}
\label{CaseIsubcaseii-unboundedr2r1}
\frac{r_3(K)}{r_2(K)}>M, \mbox{ and }\mbox{dist}(O,\partial I)<\frac{r_3}{M}, \mbox{ and }
\frac{r_2(K)}{r_1(K)}>M
\end{equation} 
leads to a contradiction.
\end{lemma}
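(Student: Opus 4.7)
The plan is to refine the argument of Lemma~\ref{q3 case I subcase ii.1 lemma}, replacing the crude surface-area estimate $\mathcal{H}^2(\partial K\setminus G_1)\lesssim r_2^2$ by a sharper projection bound $\mathcal{H}^2(G)\lesssim r_1 r_2$ on the ``top-normal'' boundary set. This finer area bound is exactly what is needed to absorb the factor $r_2/r_1>M$ that is present in subcase (ii.2) but was absent in (ii.1).

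Keeping the notation of the previous lemma, let $\mathbb{S}_{1/4}:=\{\nu\in S^2:\langle\nu,e_3\rangle\leq 1/4\}$, $\omega:=S^2\setminus \mathbb{S}_{1/4}$, and $G:=\{x\in\partial K:\nu_K(x)\cap\omega\neq\emptyset\}$. With $\delta:=384\,r_2/r_3$, the same convexity argument as in \eqref{angle upper bound} shows $G\subset\{x\in\partial K:x_3\geq b-\delta r_3\}$. Together with the subcase~(ii) assumptions $b<r_3/M$ and $r_2<r_3/M$, and the John-ellipsoid bounds $|x_i|\lesssim r_2$ for $i=1,2$, this yields $\|x\|\lesssim r_3/M$ for every $x\in G$, hence $\|Dh_K(\nu)\|\lesssim r_3/M$ for $\mathcal{H}^2$-a.e.\ $\nu\in\omega$.

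The key new step is bounding $\mathcal{H}^2(G)$. Because $\nu_3\geq 1/4$ on $G$ and any line parallel to $e_3$ meets $\partial K$ in at most two points by convexity, the area formula gives
$$\tfrac14\,\mathcal{H}^2(G)\;\leq\;\int_G\nu_3\,d\mathcal{H}^2\;\leq\;2\,\mathcal{H}^2\bigl(\pi_{e_3^{\perp}}(K)\bigr)\;\lesssim\;r_1 r_2,$$
where the last inequality uses $K\subset X+3(E-X)$, whose $e_3^\perp$-projection is an ellipse of semi-axes $\lesssim r_1,\,r_2$. Plugging this into \eqref{lp-dual-curvature-omega-boundary}, using $\langle x,\nu_K(x)\rangle^{1-p}\leq \|x\|^{1-p}$ and $q-p-2>0$,
$$\widetilde{C}_{p,q,K}(\omega)\;\leq\;\int_G\|x\|^{q-p-2}\,d\mathcal{H}^2(x)\;\lesssim\;(r_3/M)^{q-p-2}\,r_1 r_2.$$
By Lemma~\ref{q3 main}, $r_1 r_2\approx r_3^{2-q+p}$, so the right-hand side is $\approx M^{-(q-p-2)}$, which tends to $0$ as $M\to\infty$. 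This contradicts the uniform lower bound $\widetilde{C}_{p,q,K}(\omega)\geq \mathcal{H}^2(\omega)/\lambda\gtrsim 1$, completing the argument.

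The only subtle point is spotting the right sharper area bound: in the regime $r_1\ll r_2\ll r_3$, the previous proof's estimate $\mathcal{H}^2(\partial K\setminus G_1)\lesssim \delta r_2 r_3+r_1 r_2\approx r_2^2$ loses a factor of $r_2/r_1$, but restricting attention to $G$ (normals with $\nu_3$ bounded away from $0$) and projecting onto $e_3^\perp$ immediately recovers that missing factor via the two-points-per-vertical-line bound, which is exactly what closes the estimate.
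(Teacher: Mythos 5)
Your proof is correct and takes a genuinely different and in fact more economical route than the paper's. You both start from the same set $G$ of boundary points whose normals land in $\omega=S^2\setminus\mathbb{S}_{1/4}$ and the same outcome $\widetilde{C}_{p,q,K}(\omega)\lesssim M^{-(q-p-2)}$, but you bound $\mathcal{H}^2(G)\lesssim r_1 r_2$ in one stroke by the Cauchy projection formula: since $\nu_3\geq 1/4$ on $G$, $\tfrac14\mathcal{H}^2(G)\leq\int_G\nu_3\,d\mathcal{H}^2\leq 2\mathcal{H}^2(\pi_{e_3^\perp}(K))\lesssim r_1r_2$, using that vertical lines meet $\partial K$ in at most two points. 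The paper instead introduces a second set $G_2$ (points whose shadow in the $e_2e_3$-plane stays $\varepsilon r_2$-far from $\partial K'$, with $\varepsilon=96r_1/r_2$), shows the normals over $G_2$ also land in $\mathbb{S}_{1/4}$, deduces $G\subset\partial K\setminus(G_1\cup G_2)$, and estimates $\mathcal{H}^2(\partial K\setminus(G_1\cup G_2))\lesssim r_1(\varepsilon r_2+\delta r_3)\approx r_1r_2$. Your argument avoids the auxiliary parameter $\varepsilon$ and the extra set entirely; the trade-off is that you rely on the area/projection formula for convex surfaces rather than only on elementary slab decompositions, but that formula is standard and the bound you get is the same. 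In particular your identification that the missing factor $r_2/r_1$ (which breaks the crude $\mathcal{H}^2(\partial K\setminus G_1)\lesssim r_2^2$ estimate from subcase (ii.1)) is recovered precisely by restricting to normals with $\nu_3$ bounded away from zero and projecting onto $e_3^\perp$ is exactly the right insight, and matches the role that $G_2$ plays in the paper.
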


\begin{proof}
We recall that $I=[a, b]e_3$ is the orthogonal projection of $K$ on the $e_3$-axis.
As $|a|\geq b\geq 0$ has been assumed, we have $\mbox{dist}(O,\partial I)=b\leq \frac{r_3}{M}$.
Define
\begin{eqnarray*}
G_1:=\{x=(x_1, x_2, x_3)\in \partial K: x_3 \leq b-\delta r_3\},
\end{eqnarray*}
\begin{eqnarray*}
F_1:=\{\nu\in  S^2: \nu\in\nu_K(x)\ \text{for some}\ x\in G_1\},
\end{eqnarray*}
where $\delta=\frac{384r_2}{r_3}$ converges to zero as M converges to $+\infty$.
\begin{figure}
    \centering
    \includegraphics[width=1\linewidth]{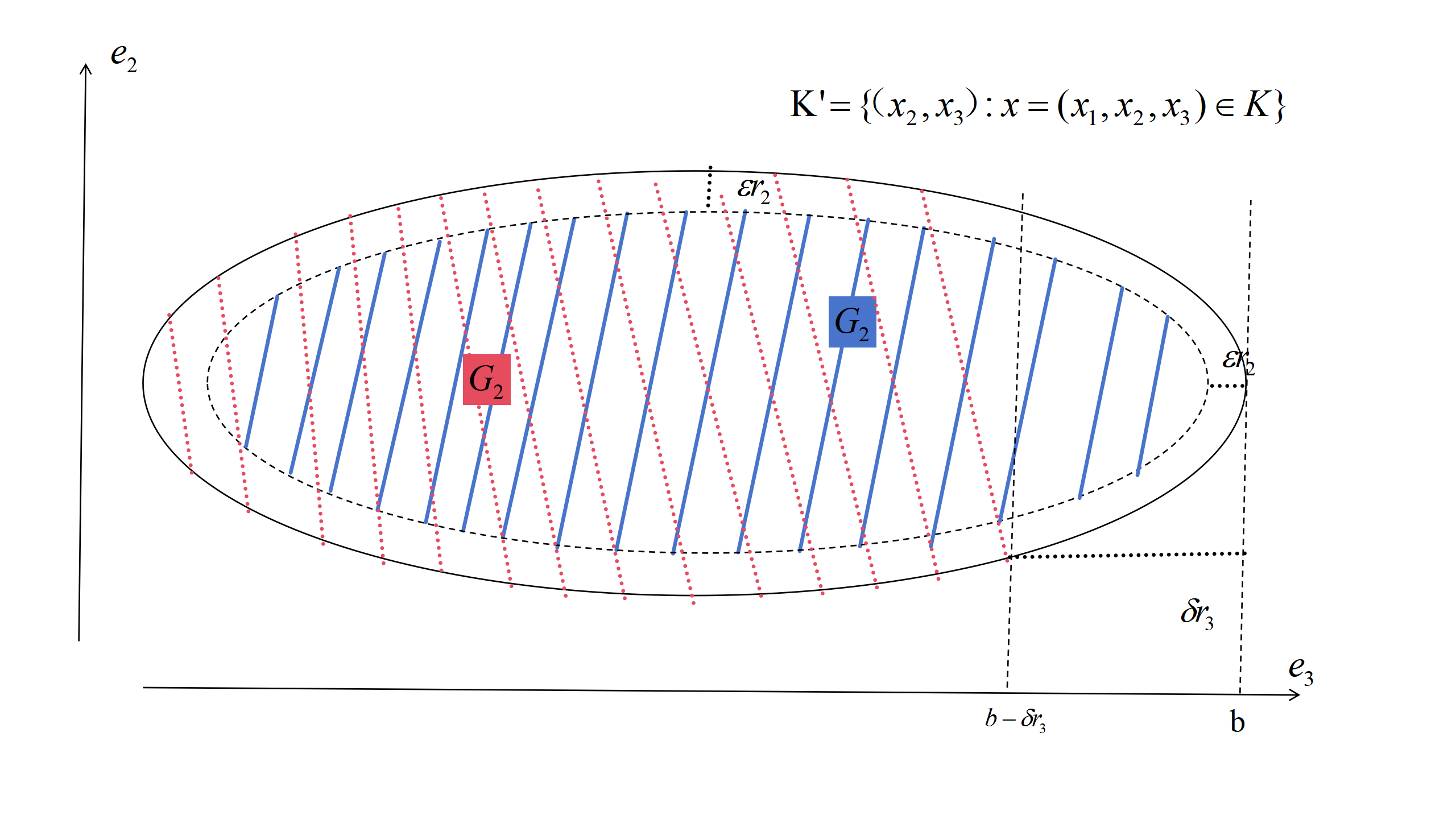}
    \caption{lemma \ref{q3 case I subcaseii.2 lemma}}
\end{figure}
On the one hand, similar to the proof of \eqref{angle upper bound}, for any $\nu=(\nu_1, \nu_2, \nu_3)\in F_1 $, we have $$\nu_3\leq \frac{96 r_2}{\delta r_3} = \frac{1}{4},$$
which implies 
$$
F_1\subset\mathbb{S}_{\frac{1}{4}}:=\left\{\nu\in  S^2: \langle \nu, e_3 \rangle\leq \frac{1}{4}\right\}.$$

Denote by $K^\prime$ the projection of $K$ on the $e_2e_3$ plane. 
Let  $\varepsilon=\frac{96 r_1}{r_2}$, that can be arbitrary small by \eqref{CaseIsubcaseii-unboundedr2r1}, and let
\begin{align*}
G_2:=&\{x=(x_1, x_2, x_3)\in \partial K:~ \mbox{dist} (x^\prime, \partial K^\prime)\geq \varepsilon r_2,~ x^\prime=(x_2, x_3)\},\\
F_2:=&\{\nu\in  S^2: \nu\in\nu_K(x)\ \text{for some}\ x\in G_2\}.
\end{align*}
For any $x=(x_1, x_2, x_3)\in G_2$, by the choice of $G_2$, there exist points $y=(y_1, x_2+\varepsilon r_2, x_3)\in K$
and $\tilde{y}=(\tilde{y}_1, x_2-\varepsilon r_2, x_3)\in K$.
 Note that $|x_1|, |y_1|,|\tilde{y}_1|\leq 12r_1$. For any $\nu=(\nu_1, \nu_2, \nu_3)\in \nu_K(x)$, we have that
$\langle \nu, y-x\rangle\leq 0$ and $\langle \nu, \tilde{y}-x\rangle\leq 0$, and hence  a straightforward computation yields that
\begin{equation}
\label{normal vector estimate 1}
    \pm\varepsilon r_2\nu_2\leq \max\{|\nu_1(y_1-x_1)|,|\nu_1(\tilde{y}_1-x_1)|\}\leq 24 r_1.
\end{equation}
It follows that
\begin{equation}
\label{normalvectorestimate-nu2r1r2}
    |\nu_2|\leq \frac{24 r_1}{\varepsilon r_2}.
\end{equation}
Similarly, replacing $y$ and $\tilde{y}$ by $z=(y_1, x_2, x_3+\varepsilon r_2)\in K$ and $\tilde{z}=(y_1, x_2, x_3-\varepsilon r_2)\in K$, we  deduce
\begin{equation}
    |\nu_3|\leq \frac{24r_1}{\varepsilon r_2}.
\end{equation}
Therefore,  we obtain $F_2 \subset\mathbb{S}_{\frac{1}{4}}$ as $\varepsilon  = \frac{96 r_1}{r_2}$.
In particular, we have
$$
F_1\bigcup F_2\subset\mathbb{S}_{\frac{1}{4}}:=\left\{\nu\in  S^2: \langle \nu, e_3\rangle \leq {\frac{1}{4}}\right\}
$$
as $\delta=\frac{384 r_2}{r_3}$ and $\varepsilon=\frac{96 r_1}{r_2}$, and in turn, we deduce that
$$
G:=\left\{x\in \partial' K: \nu_K(x) \in  S^2 \backslash \mathbb{S}_{\frac{1}{4}} \right\}\subset \partial K \backslash (G_{1}\cup G_2),
$$
Since $\mbox{dist}(O,\partial I)<\frac{r_3}{M}$ by the condition \eqref{CaseIsubcaseii-unboundedr2r1}, the definiton of $G_1$ and \eqref{CaseIsubcaseii-unboundedr2r1} yield that
$$
\text{diam}(\partial K \backslash G_{1})\lesssim \max\{\delta r_3, r_1, r_2\}\lesssim r_2\leq \frac{r_3}{M}.$$
 Combining this estimate with \eqref{CaseIsubcaseii-unboundedr2r1} implies that
 $|y|\lesssim \frac{r_3}{M}$ for any $y\in G.$

Therefore, for $\mathcal{H}^2$ a.e. $\nu \in  S^2 \backslash \mathbb{S}_{\frac{1}{4}}$, we have
\begin{equation}\label{small rho bound 2}
    ||Dh_K(\nu)|| \lesssim \frac{r_3}{M}.
\end{equation}
Since $\varepsilon =\frac{96r_1}{r_2}$ and $\delta=\frac{384r_2}{r_3}$,
\begin{equation}\label{area uppper bound 2}
    \mathcal{H}^{2}(\partial K \backslash (G_1 \cup G_2)) \lesssim  r_1 (\varepsilon r_2+\delta r_3)\approx r_1r_2.
\end{equation}

For $p\in[0,1)$ and $q > 2+p$, we deduce from \eqref{n=3-f-density} that
\begin{equation}\label{5.5 upper bound}
\begin{aligned}
\widetilde{C}_{p,q,K}\left( S^2 \backslash \mathbb{S}_{\frac{1}{4}}\right)&=\int_{ S^2 \backslash \mathbb{S}_{\frac{1}{4}}}h^{1-p}_K(\nu)||Dh_K||^{q-3}(\nu)dS_K(\nu)\\
&=\int_{ S^2 \backslash \mathbb{S}_{\frac{1}{4}}}\left(\frac{h_K}{||Dh_K||}\right)^{1-p}||Dh_K||^{q-p-2}dS_K(\nu)\\
&\leq \left(\frac{r_3}{M}\right)^{q-p-2}\mathcal{H}^2(G)\\
&\lesssim  \left(\frac{1}{M}\right)^{q-p-2}r_3^{q-p-2}r_1r_2\\
& \lesssim \left(\frac{1}{M}\right)^{q-p-2},
\end{aligned}
\end{equation}
where the first inequality follows from \eqref{small rho bound 2}, the second inequality follows from \eqref{area uppper bound 2}, and the last inequality follows from the basic estimate \eqref{q3 main equality}.
Since
\begin{equation*}\label{q3 case I subcaseii.2 lemma lower bound}
\widetilde{C}_{p,q,K}\left( S^2 \backslash \mathbb{S}_{\frac{1}{4}}\right)=\int_{ S^2 \backslash \mathbb{S}_{\frac{1}{4}}}f(\nu)d\nu
\geq\frac{1}{\lambda}\mathcal{H}^{2}\left( S^2 \backslash \mathbb{S}_{\frac{1}{4}}\right)>0,
\end{equation*}
the above estimate contradicts \eqref{5.5 upper bound} when $M$ is sufficiently large, completing the argument for Lemma~\ref{q3 case I subcaseii.2 lemma} (ruling out subcase (ii) of Case I).
\end{proof}

After considering Case I, we continue the proof of Theorem~\ref{n3-ppos--C0-estimate} under the assumptions in Case II.\\

\subsection{Lemmas needed for the proof of Proposition~\ref{good-inequality-prop} under the assumption as in Case II ($r_1 \ll r_2 \approx r_3$)}
\label{subsecCaseII}

Given $p\in[0,1)$, $q >2+p$ and $\lambda>1$, Case II says that there exists a constant $C_1>1$ depending on $p,q,\lambda$, such that for any large $M>1$, one finds a solution $K$ to the equation \eqref{dual equation n=3} with   density function $f$ of $\widetilde{C}_{p,q,K}$ satisfying
\begin{equation}\label{case2 hold}
 1/\lambda<f<\lambda,\ \text{and that}\ \frac{r_2(K)}{r_1(K)}>M\ \text{and}\ \frac{r_3(K)}{r_2(K)}\leq C_1.
\end{equation}
We keep $\lambda,p,q,C_1$ fixed for the whole Section~\ref{subsecCaseII} about Case II. For any object $Y\subset\R^3$, we write $Y'$ to denote the orthogonal projection of $Y$ into the $e_2e_3$ plane.

Within Case II, one of the following two subcases can occur.

\medskip
 \noindent {\it Subcase (i)}: There exists $c_4>0$ depending on $p,q,\lambda$, such that for any large $M>1$, one finds a solution $K$ to the equation \eqref{dual equation n=3} satisfying \eqref{case2 hold} and
\begin{equation}
\label{subcasei-case2}
\mbox{dist} (O, \partial K^\prime)> c_4 r_3.
\end{equation}
\noindent {\it Subcase (ii)}: For any large $M>1$, one finds a solution $K$ to the equation \eqref{dual equation n=3} satisfying \eqref{case2 hold} and
\begin{eqnarray}
\label{CaseIIsubcase(ii)}
\mbox{dist} (O, \partial K^\prime)\leq \frac{r_3}{M}.
\end{eqnarray}

\begin{lemma}\label{q3 case II subcase i lemma}
Subcase (i) in Case II does not occur.
\end{lemma}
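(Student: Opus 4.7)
The plan is to adapt the argument of Lemma~\ref{q3 case I subcase i lemma} from the ``long thin'' regime of Case I to the present ``pancake'' regime, with the 1-dimensional projection $I$ of $K$ onto the $e_3$-axis replaced by the 2-dimensional projection $K'$ onto the $e_2e_3$-plane. Setting $\sigma_0=c_4/3$, I would introduce the ``top--bottom'' set
$$G_1:=\{x=(x_1,x_2,x_3)\in\partial K:\mbox{dist}(x',\partial K')\geq\sigma_0 r_3\},\quad F_1:=\nu_K(G_1),$$
consisting of the part of $\partial K$ lying over the inner core of $K'$.

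First, by convexity of $K$ together with the fact that the $e_1$-extent of $K$ is $O(r_1)$, an argument exactly parallel to \eqref{normal 3 estimate} yields $\sqrt{\nu_2^2+\nu_3^2}\lesssim r_1/r_3$ for every $\nu\in\nu_K(x)$ with $x\in G_1$, so $F_1$ lies in two small spherical caps around $\pm e_1$ and
$$\mathcal{H}^2(F_1)\lesssim (r_1/r_3)^2\to 0$$
as $M\to\infty$, since $r_1/r_2\lesssim 1/M$ and $r_2\approx r_3$. Second, the hypothesis $\mbox{dist}(O,\partial K')>c_4 r_3$ allows the Caffarelli-type inclusion argument leading to \cite[estimate~(3.7)]{CFL22}, now carried out in the 2-dimensional projection $K'$, to produce a translated dilate $\tilde c_4 E$ of the John ellipsoid inside $\bigcup\{\mbox{conv}\{o,x\}:x\in G_1\}$, which by \eqref{tildeVKZ-def2} gives
$$V_K(F_1)\gtrsim r_1r_2r_3.$$

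To arrange a lower bound on $\|Dh_K\|$, I would then introduce the auxiliary set $G_2:=\{x\in\partial K:|x'|\leq \eta r_3\}$ for a small $\eta>0$ and its image $F_2:=\nu_K(G_2)$. The cone $\bigcup_{x\in G_2}\mbox{conv}\{o,x\}$ has $e_1$-extent $O(r_1)$ and projects into a disk of radius $\eta r_3$, so $V_K(F_2)\lesssim\eta^2 r_1r_3^2$; choosing $\eta$ small enough (depending only on $\lambda,p,q,c_4$), the set $F:=F_1\setminus F_2$ satisfies $V_K(F)\gtrsim r_1r_2r_3$, and every $\nu\in F$ has $Dh_K(\nu)\notin G_2$, hence $\|Dh_K(\nu)\|\gtrsim r_3$. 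On $F$ one also has $h_K\lesssim r_1$ (since $\nu$ is near $\pm e_1$ and $K$ has $e_1$-extent $O(r_1)$), and $\|Dh_K\|^{q-3}\gtrsim r_3^{q-3}$ (using the lower bound $\gtrsim r_3$ when $q\geq 3$, and the global upper bound $\|Dh_K\|\lesssim r_3$ when $q<3$). Plugging into \eqref{CpqK-from-cone-volume},
$$\widetilde{C}_{p,q,K}(F)\gtrsim r_1^{-p}r_3^{q-3}V_K(F)\gtrsim r_1^{1-p}r_2r_3^{q-2},$$
which, after substituting $r_2\approx r_3$ and $r_1\approx r_3^{1-q+p}$ (the form of \eqref{q3 main equality} in Case II), simplifies to $\widetilde{C}_{p,q,K}(F)\gtrsim r_3^{p(q-p)}\gtrsim 1$. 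This contradicts $\widetilde{C}_{p,q,K}(F)\leq\lambda\mathcal{H}^2(F)\lesssim r_3^{-2(q-p)}$, which tends to $0$ as $M$ (and hence $r_3$) goes to infinity.

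The main obstacle will be the cone volume bound $V_K(F_1)\gtrsim r_1r_2r_3$. Geometrically it is clear that $O$ being deep inside the pancake-projection $K'$ forces generic rays from $O$ to exit through the top--bottom caps rather than through the side of $K$; however, quantifying this into a clean inclusion of a uniformly scaled John ellipsoid requires a genuinely 2-dimensional variant of the \cite[estimate~(3.7)]{CFL22} argument, whereas only a 1-dimensional version was invoked in Lemma~\ref{q3 case I subcase i lemma}. All remaining ingredients are essentially routine adaptations of the Case I proof.
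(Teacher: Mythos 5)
Your overall plan matches the paper's proof of Lemma~\ref{q3 case II subcase i lemma} almost exactly: the same sets $G_1$ (the part of $\partial K$ over the inner core of $K'$), $G_2$ (the thin slab $|x'|\leq\eta r_3$), $F=F_1\setminus F_2$, the spherical-cap estimate $\mathcal{H}^2(F_1)\lesssim (r_1/r_3)^2$, the cone-volume lower bound $V_K(F)\gtrsim r_1r_2r_3$ together with $\|Dh_K\|\approx r_3$ on $F$, and the integral of the density against the basic estimate \eqref{q3 main equality}. Your final numerology is a slight variant (you use the sharper $h_K\lesssim r_1$ on $F$ instead of the paper's crude $h_K\leq 12r_3$, producing a lower bound $\gtrsim r_3^{p(q-p)}$ rather than $\gtrsim 1$, but both contradict the upper bound for $M$ large). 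One small slip: your $\eta$ must be allowed to depend also on $C_1$ from \eqref{case2 hold}, since bounding $V_K(F_2)\lesssim\eta^2 r_1 r_3^2$ by a small fraction of $r_1r_2r_3$ uses $r_3\leq C_1 r_2$; the paper tracks this constant explicitly.

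The genuine gap is exactly the one you flag: the cone-volume lower bound $V_K(F_1)\gtrsim r_1r_2r_3$. You leave it as an appeal to a ``2-dimensional variant'' of \cite[estimate~(3.7)]{CFL22}, but the paper does not actually go that route; it gives a direct and fairly short construction using the John ellipsoid. Concretely: it introduces $\widetilde G_1=\{x\in\partial'K: x'\in\frac{c_4}{2}\widetilde E'\}\subset G_1$, where $\widetilde E=E-X$, and picks $z_0\in\partial K$ with $z_0'\in\partial K'$ and $O\in[z_0',X']$. The hypothesis $\mathrm{dist}(O,\partial K')>c_4 r_3$ together with $\widetilde E'\subset K'\subset 3\widetilde E'$ forces $O=(1-t)z_0'+tX'$ with $t\geq c_4/3$, so $z=(1-t)z_0+tX$ lies on the $e_1$-axis, $z=sr_1e_1$ with $|s|\leq 6$, and $z+\frac{c_4}{3}\widetilde E\subset K$ by convexity. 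Shifting to $\tilde s=s+c_4/6\geq c_4/6$ and rescaling by $\tilde s/7$ then places the full-dimensional ellipsoid $\tilde sr_1e_1+\frac{c_4\tilde s}{42}\widetilde E$ inside $K\cap\mathrm{pos}_+\widetilde G_1=\bigcup\{\mathrm{conv}\{o,x\}:x\in\widetilde G_1\}$, giving $V_K(F_1)\geq\widetilde V_K(\widetilde G_1)\geq c_5 r_1r_2r_3$ via \eqref{tildeVKZ-def2}. This is the one nontrivial idea you would need to supply; everything else in your proposal is correct or a routine variant of the paper's computation.
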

\begin{proof} In the proof of Lemma~\ref{q3 case II subcase i lemma}, the implied constants in $\lesssim$ and $\approx$ depend also on the $C_1$ and $c_4$ introduced in \eqref{case2 hold} and \eqref{subcasei-case2} besides $\lambda,p,q$. For any object $Y\subset\R^3$, we write $Y'$ to denote its projection in the coordinate plane ${\rm lin}\{e_2,e_3\}$.

We suppose that subcase (i) in Case II occurs, and hence \eqref{case2 hold} and \eqref{subcasei-case2} hold, and seek a contradiction. We consider (see Figure \ref{q3 case II subcase i lemma figure})
\begin{eqnarray}\label {G1def}
G_{1}:=\left\{x\in \partial' K:~ \mbox{dist} (x^\prime, \partial K^\prime)\geq \frac{c_4 r_3}{4}\right\},
\end{eqnarray}
\begin{eqnarray*}
F_{1}:=\{\nu\in  S^2: \nu\in\nu_K(x)\ \text{for some}\ x\in G_{1}\}.
\end{eqnarray*}
\begin{figure}
    \centering
    \includegraphics[width=1\linewidth]{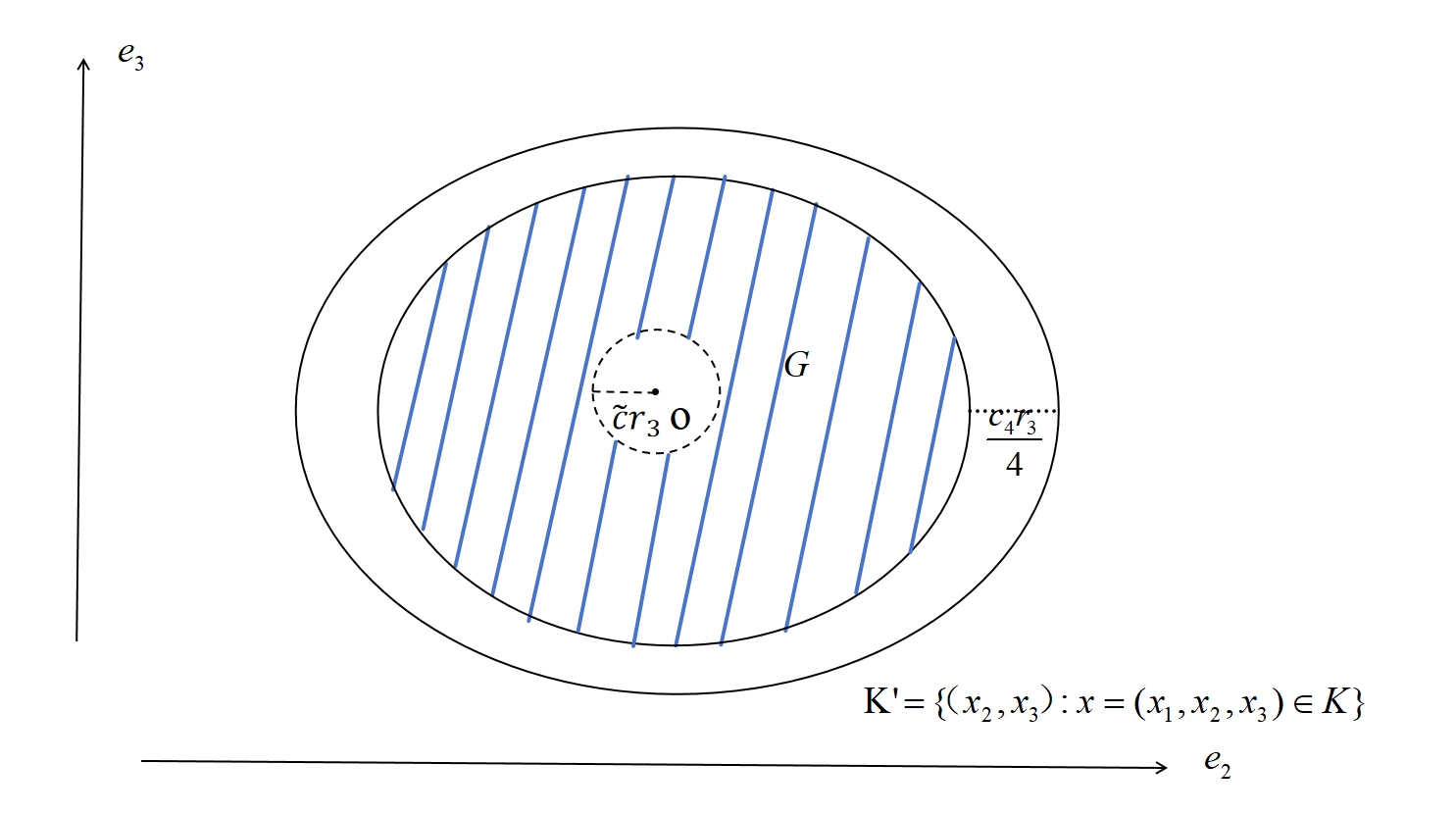}
    \caption{lemma \ref{q3 case II subcase i lemma}}
    \label{q3 case II subcase i lemma figure}
\end{figure}
Similarly to the proof of \eqref{normal vector estimate 1}, for any $\nu=(\nu_1,\nu_2,\nu_3)\in F_1$, we can deduce
$|\nu_2|\leq \frac{96}{Mc_4}$ and $|\nu_3|\leq \frac{96}{Mc_4}$.
Hence, the density function $f$ of $\widetilde{C}_{p,q,K}$ satisfies
\begin{equation}
\label{q3 case II subcase i ineq1}
\widetilde{C}_{p,q,K}(F_1)=\int_{F_{1}}f\lesssim \mathcal{H}^2(F_1)\lesssim \frac{1}{M^{2}}
\end{equation} 
provided $M$ is sufficiently large.

To obtain a lower bound for $\widetilde{C}_{p,q,K}(F_1)$, first we estimate $V_K(F_{1})$. We observe that the origin centered ellipsoid $\widetilde{E}=E-X$ with half axes $r_1,r_2,r_3$ satisfies
\begin{equation}
\label{widetildeE-John}
X+\widetilde{E}\subset K\subset X+3\widetilde{E}\subset 6\widetilde{E}
\end{equation} 
by \eqref{John n=2}, and
$$
\widetilde{G}_1=\left\{x\in \partial' K:~ x^\prime\in \frac{c_4}{2}\,\widetilde{E}'\right\}\subset \left\{x\in \partial' K:~ \|x^\prime\|\leq \frac{c_4 r_3}{2}\right\}\subset G_1
$$
by \eqref{subcasei-case2}. We claim that $c_5= \frac{4\pi}3\cdot\frac{c_4^2}{252}$ satisfies that
\begin{equation}
\label{q3-case2-subscasei-F1C3}
V_K(F_{1})\geq \widetilde{V}_K(\widetilde{G}_1)\geq c_5r_1r_2r_3.
\end{equation}
There exists a $z_0\in\partial K$ such that $z'_0\in\partial K'$ and $O$ lies on the segment connecting $z'_0$ and $X'$, and hence \eqref{subcasei-case2} and imply that $O=(1-t)z'_0+tX'$ where $\frac{c_4}3\leq t\leq 1$, and in turn we deduce 
via \eqref{widetildeE-John}  that $z=(1-t)z_0+tX$ satisfies that $z=sr_1e_1$ for $s\in[-6,6]$ and 
$z+\frac{c_4}3\,\widetilde{E}\subset K$. We may assume that $s\geq 0$ by possibly changing $e_1$ into $-e_1$, and hence
for $\tilde{s}=s+\frac{c_4}6$, we have
\begin{equation}
\label{q3-case2-subscasei-tildes}
\tilde{s}r_1e_1+\frac{c_4}6\,\widetilde{E}\subset K\mbox{ \ and \ }\tilde{s}\geq \frac{c_4}6.
\end{equation}
Since $7r_1e_1+\frac{c_4}6\,\widetilde{E}\subset {\rm pos}_+\widetilde{G}_1$  by \eqref{widetildeE-John} and the definition of $\widetilde{G}_1$ , we have 
$$
\tilde{s}r_1e_1+\frac{c_4\tilde{s}}{42}\,\widetilde{E}\subset K\cap {\rm pos}_+\widetilde{G}_1=\bigcup\{{\rm conv}\{o,x\}:\,x\in \widetilde{G}_1\}
$$
by \eqref{q3-case2-subscasei-tildes},
proving the claim \eqref{q3-case2-subscasei-F1C3} by \eqref{tildeVKZ-def2}.


Next, we define the constant $\tilde{c}\in(0,1)$ by the formula
$$
 6\pi\tilde{c}^2C_1=\frac{c_5}{2}
$$
using the $C_1$ in \eqref{case2 hold},
and let
\begin{align*}
G_{2}:=&\left\{x=(x_1,x_2,x_3)\in \partial' K: \sqrt{|x_2|^2+|x_3|^2}\leq \tilde{c} \cdot r_3\right\},\\
\nonumber
F_{2}:=&\{\nu\in  S^2: \nu\in\nu_K(x)\ \text{for some}\ x\in G_{2}\}.
\end{align*}
For 
$$
\Gamma_K=\{x\in\partial K:\,\exists w\in\nu_K(x) \mbox{ with } h_K(w)=0\},
$$
Lemma~\ref{Gauss-map-bijective} (i) yields any segment in $\partial K$ is contained in $\Gamma_K$. Therefore, \eqref{tildeVKZ-VK}, \eqref{tildeVKZ-def2} and the definition of $G_2$ imply that
$$
V_K(F_{2})\leq \widetilde{V}_K(G_2\cup \Gamma_K)=\widetilde{V}_K(G_2)=
\left|\bigcup\{{\rm conv}\{o,x\}:\,x\in G_2\}\right|\leq \left|{\rm conv}\,G_2\right|.
$$
In particular, $\tilde{c}$ is chosen in a way such that the last inequality in \eqref{q3-case2-subscasei-c3} holds in the chain of inequalities 
\begin{equation}
\label{q3-case2-subscasei-c3}
V_K(F_{2})\leq \left|{\rm conv}\,G_2\right|\leq 6r_1\cdot \pi(\tilde{c} r_3)^2\leq 
6\pi (\tilde{c} r_3)^2C_1r_1r_2r_3
\leq  \frac{c_5}{2}\,r_1r_2r_3
\end{equation}
where the inequalities in \eqref{q3-case2-subscasei-c3} hold by the definition of $G_2$ and $r_3\leq C_1r_2$.
For $G=G_{1} \backslash G_{2}$, and
\begin{eqnarray*}
F:=\{\nu\in  S^2: \nu\in\nu_K(x)\ \text{for some}\ x\in G\},
\end{eqnarray*}
we have $\mathcal{H}^2(F)\leq\mathcal{H}^2(F_{1})\lesssim \frac{1}{M^2}$ by \eqref{q3 case II subcase i ineq1} and $V_K(F) \geq \frac{c_5}{2}\,r_1r_2r_3$ by \eqref{q3-case2-subscasei-F1C3} and \eqref{q3-case2-subscasei-c3}. Moreover, for $\mathcal{H}^2$ a.e. $\nu \in F$, we have $12r_3\geq ||Dh_K(\nu)|| \geq \tilde{c}\, r_3$, and hence  \eqref{n=3-f-density} yields that
\begin{equation}\label{q3 case II subcase i ineq2}
\begin{aligned}
\widetilde{C}_{p,q,K}(F_1)\geq   \widetilde{C}_{p,q,K}(F)= & \int_{F}h_K^{-p}||Dh_K||^{q-3}dV_K(\nu) \\
   \gtrsim &r^{q-p-3}_3V_K(F)\\
    \gtrsim & r_3^{q-p-3}r_1r_2r_3 \\
    \gtrsim &1,
\end{aligned}
\end{equation}
where the last inequality follows from \eqref{q3 main equality}.
When M is sufficiently large, \eqref{q3 case II subcase i ineq1} contradicts \eqref{q3 case II subcase i ineq2}, so subcase (i) of Case II does not occur.
\end{proof}

Let us turn to subcase (ii) of Case II, where we have $r_1\ll r_2 \approx r_3$; namely, for any large $M>1$,
 there exists  a solution $K$ to the equation \eqref{dual equation n=3}  such that  \eqref{case2 hold} holds, and
\begin{equation}
\label{subcaseii-CaseII-repeat}
\mbox{dist} (O, \partial K^\prime)\leq  \frac{r_3}{M}  \ \text{and}\ 
\frac{r_2}{r_1}>M \ \text{while}\ \frac{r_3}{r_2}\leq C_1
\end{equation}
hold for $r_i=r_i(K)$, $i=1,2,3$,
where $C_1>1$ is independent of $M$.
We recall that the center of the John ellipsoid $E$ in \eqref{John n=2} is $X=(X_1,X_2,X_3)$, and hence 
$$
h_{\widetilde{K}}(\nu)=h_K(\nu)-\langle\nu, X\rangle
$$
for $\nu\in S^2$
is the support function of convex body $\widetilde{K}=K-X$. 
The main argument for subcase (ii) of Case II in Lemma~\ref{q3 case II subcase ii.3 lemma} is prepared by Lemma~\ref{q3 case II subcase ii.1 lemma}
 and Lemma~\ref{q3 case II subcase ii.2 lemma}.


\begin{lemma}
\label{q3 case II subcase ii.1 lemma}
If subcase (ii) of Case II holds and the $M$ in \eqref{subcaseii-CaseII-repeat} is large enough, then
there exists a unit vector $\nu=(\nu_1,\nu_2,\nu_3)\in \nu_K(G_+)$ such that 
\begin{equation}
\label{nu123-estimate}
\nu_1\geq \frac{1}{2}, \quad |\nu_2| \lesssim \frac{r_1}{r_3}, \quad \text{and} \quad |\nu_3| \lesssim \frac{r_1}{r_3}, 
\end{equation}
and $\nu$ satisfies
\begin{equation}
\label{nu-estimate}
h_K(\nu) \leq r_3^{2-3(q-p)},
\end{equation}
where $G_+:=\left\{(x_1,x_2,x_3)\in \partial K: x_1>X_1, \frac{1}{8}\,r_i<|x_i-X_i|<\frac{1}{2}\,r_i, i=2,3\right\}$,
and the implied constants in $\lesssim$ in \eqref{nu123-estimate} depend also on the $C_1$ in \eqref{subcaseii-CaseII-repeat} besides $\lambda$, $p$, $q$. 
\end{lemma}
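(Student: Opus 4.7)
The strategy is to establish the two conclusions of the lemma independently: the geometric bounds $\nu_1\geq 1/2$, $|\nu_2|,|\nu_3|\lesssim r_1/r_3$ will hold for \emph{every} $\nu\in\nu_K(G_+)$, while the smallness $h_K(\nu)\leq r_3^{2-3(q-p)}$ will be extracted by an averaging argument applied to the identity $d\widetilde{C}_{p,q,K}=f\,d\mathcal{H}^2$ on $\nu_K(G_+)$.

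\textbf{Geometric step.} Fix $x\in G_+$ with $\nu=(\nu_1,\nu_2,\nu_3)\in\nu_K(x)$. The inclusion $E\subset K$ gives $h_E(\nu)\leq h_K(\nu)=\nu\cdot x$, that is,
\[
\sqrt{r_1^2\nu_1^2+r_2^2\nu_2^2+r_3^2\nu_3^2}\;\leq\;\nu\cdot(x-X),
\]
while the inclusion $K\subset X+3(E-X)$ together with the defining inequalities of $G_+$ gives $|x_1-X_1|\leq 3r_1$ and $|x_i-X_i|\leq r_i/2$ for $i=2,3$, hence
\[
\nu\cdot(x-X)\;\leq\;3r_1|\nu_1|+\tfrac{1}{2}r_2|\nu_2|+\tfrac{1}{2}r_3|\nu_3|.
\]
Squaring and rearranging these two inequalities yields, after a routine algebraic manipulation, $r_2^2\nu_2^2+r_3^2\nu_3^2\lesssim r_1^2\nu_1^2$; combined with $r_2\approx r_3$ (Case II) this is $|\nu_2|,|\nu_3|\lesssim r_1/r_3$. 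The basic estimate \eqref{q3 main equality} specialised to $r_2\approx r_3$ gives $r_3^{q-p}\gtrsim M$, so $r_1/r_3\to 0$ as $M\to\infty$, and $|\nu|=1$ then forces $\nu_1^2\geq 3/4$ for $M$ large. The sign $\nu_1>0$ follows from the graph representation of $\partial K$ over the $e_2 e_3$-plane: since $(x_2-X_2)^2/r_2^2+(x_3-X_3)^2/r_3^2<1/2$, the point $(X_1,x_2,x_3)$ lies in $E\subset\mathrm{int}(K)$, so $x\in G_+$ is the upper endpoint of the vertical segment $K\cap\{(t,x_2,x_3):t\in\R\}$, which forces any outward subgradient to have positive first coordinate. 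Hence $\nu_1\geq\sqrt{3}/2\geq 1/2$ on all of $\nu_K(G_+)$.

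\textbf{Averaging step.} The preceding bounds confine $\nu_K(G_+)$ to a spherical cap around $e_1$ of area $\lesssim(r_1/r_3)^2$, and together with $f\leq\lambda$ and the basic estimate $r_1\approx r_3^{1-q+p}$ this gives
\[
\widetilde{C}_{p,q,K}(\nu_K(G_+))\;\leq\;\lambda\,\mathcal{H}^2(\nu_K(G_+))\;\lesssim\;(r_1/r_3)^2\;\approx\;r_3^{-2(q-p)}.
\]
Suppose, toward a contradiction, that $h_K(\nu)>r_3^{2-3(q-p)}$ for every $\nu\in\nu_K(G_+)$. By \eqref{Gauss-map-bijective-SK-eq},
\[
\widetilde{C}_{p,q,K}(\nu_K(G_+))\;=\;\int_{G_+\cap\partial' K}h_K(\nu_K(x))^{1-p}\|x\|^{q-3}\,d\mathcal{H}^2(x).
\]
To control $\|x\|^{q-3}$ from below uniformly over the integration region, I would restrict to a sub-quadrant $G_+^{\ast}\subset G_+$ on which $\|x\|\approx r_3$. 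Here the hypothesis $\mathrm{dist}(O,\partial K')\leq r_3/M$ combined with $E'\subset K'$ yields $|X'|\gtrsim r_2\approx r_3$, so one can pick the sign of $x_3-X_3$ (or of $x_2-X_2$) to match the dominant coordinate of $X'$, forcing $|x_3|\gtrsim r_3$ (resp. $|x_2|\gtrsim r_3$) on $G_+^{\ast}$; meanwhile $\mathcal{H}^2(G_+^{\ast}\cap\partial' K)\gtrsim r_2 r_3\approx r_3^2$. The resulting lower bound is $\gtrsim r_3^{(1-p)(2-3(q-p))+(q-1)}$, and comparing with the upper bound $r_3^{-2(q-p)}$ yields $r_3^{1-p+3p(q-p)}\lesssim 1$. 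Because $1-p+3p(q-p)>0$ while $r_3\gtrsim M^{1/(q-p)}\to\infty$, this is a contradiction for $M$ sufficiently large, producing the desired $\nu$.

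\textbf{Main obstacle.} The technical heart is the integral step when $q\geq 3$, where $\|x\|^{q-3}$ can be arbitrarily small unless one has a uniform lower bound on $\|x\|$; this is where the projection hypothesis $\mathrm{dist}(O,\partial K')\leq r_3/M$ is indispensable, entering through the intermediate bound $|X'|\gtrsim r_3$ and the sign-matching choice of sub-quadrant. The squared Cauchy-Schwarz-like algebra producing $r_2|\nu_2|,r_3|\nu_3|\lesssim r_1|\nu_1|$ is routine but must be done carefully to keep the constants independent of $M$.
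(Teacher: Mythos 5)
Your proof is correct in substance, but it follows a genuinely different route from the paper's in both of its two steps, so it is worth recording the comparison.

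For the geometric step \eqref{nu123-estimate}, you deduce $r_2|\nu_2|+r_3|\nu_3|\lesssim r_1$ by comparing $h_E(\nu)$ and $h_K(\nu)=\nu\cdot x$ and then squaring; the paper instead chooses explicit points of the form $y=(y_1,\,x_2\pm r_3/(2C_1),\,x_3)\in K$ and $z=(z_1,x_2,x_3\pm r_3/2)\in K$ and reads the bounds on $\nu_2,\nu_3$ directly off the convexity inequalities $\langle\nu,y-x\rangle\le0$, $\langle\nu,z-x\rangle\le0$ together with $|x_1|,|y_1|,|z_1|\le 12 r_1$. Both routes are routine; yours is a little more algebraic, the paper's a little more combinatorial. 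You are also more explicit than the paper about why $\nu_1$ is positive (the ``upper endpoint of a vertical segment through $\mathrm{int}\,K$'' argument), which is a nice clarification.

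For the contradiction step, the divergence is more substantial. You keep the body $K$ in place, use the contradiction hypothesis $h_K(\nu)>r_3^{2-3(q-p)}$ pointwise inside the surface-integral representation \eqref{Gauss-map-bijective-SK-eq} of $\widetilde{C}_{p,q,K}(\nu_K(G_+))$, and secure the remaining factor $\|x\|^{q-3}\approx r_3^{q-3}$ by restricting to a sub-quadrant $G_+^{\ast}$ on which $\|x\|\gtrsim r_3$, invoking the subcase~(ii) hypothesis $\mathrm{dist}(O,\partial K')\le r_3/M$ to show $|X'|\gtrsim r_3$ and then choosing the signs of $x_i-X_i$ accordingly. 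The paper instead passes to the recentred body $\widetilde K=K-X$, converts the contradiction hypothesis into the pointwise bound $h_{\widetilde K}(\nu)\lesssim r_3^{2(q-p)-1}h_K(\nu)$ on $\nu_K(G_+)$, and then lower-bounds the resulting integral by the cone volume $V_{\widetilde K}(\nu_K(G_+))\gtrsim r_1r_2r_3$, which it establishes via the linear-equivariance property \eqref{tildeVK-linear-inv} of $\widetilde V$ applied to the John ellipsoid. The two bookkeeping schemes produce the same power of $r_3$: your exponent comparison gives $r_3^{1-p+3p(q-p)}\lesssim 1$, the paper's gives $r_3^{-2(q-p)}\gtrsim r_3^{1-2(q-p)}$; both reduce to $r_3\lesssim1$, contradicted by Corollary~\ref{q3 estimate} and $M\to\infty$. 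What your approach buys is that the step $\|x\|\gtrsim r_3$ on $G_+$ is isolated and justified explicitly from the subcase~(ii) condition; in the paper's write-up the analogous claim $\frac12 r_3\le\|Dh_K(\nu)\|$ in \eqref{another inequality} is asserted ``from \eqref{John n=2} and the definition of $G_+$'' without visibly invoking $\mathrm{dist}(O,\partial K')\ll r_3$, and it is not clear that it holds on all four sign-quadrants of $G_+$ if $X'$ is near the boundary of $\frac12\widetilde E'$. Your sub-quadrant restriction handles this cleanly, and it costs nothing because the upper bound on $\widetilde C_{p,q,K}(\nu_K(G_+))$ already controls any subregion. What the paper's approach buys is that it avoids the lower bound on $\|x\|$ altogether in the contradiction case $q<3$ (your method covers it too, but needs $\|x\|\lesssim r_3$ there, which is immediate from the John inclusion, so this is not a real gap on your side).

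One minor point: you assert $r_3^{q-p}\gtrsim M$ by citing ``the basic estimate \eqref{q3 main equality} specialised to $r_2\approx r_3$''; it would be cleaner to cite Corollary~\ref{q3 estimate}, which already packages $r_3^{q-p}\gtrsim r_3/r_1\ge M$, to avoid reproving it. Otherwise the argument is sound.
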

\begin{proof}
First, for any $\nu=(\nu_1,\nu_2,\nu_3)\in \nu_K(G_+)$, and hence $\nu\in \nu_K(x)$ for some $x=(x_1, x_2, x_3)\in G_+$,  there exist points $y=(y_1, x_2+\frac{r_3}{2C_1}, x_3)\in K$,
and $\tilde{y}=(\tilde{y}_1, x_2-\frac{r_3}{2C_1}, x_3)\in K$; moreover, $z=(y_1, x_2, x_3+\frac{r_3}{2})\in K$,
and $\tilde{z}=(\tilde{y}_1, x_2, x_3-\frac{r_3}{2})\in K$; therefore, 
similarly to the proof of \eqref{normalvectorestimate-nu2r1r2}, we deduce that 
\begin{equation}\label{F1 area}
|\nu_2|\lesssim \frac{r_1}{r_2} \lesssim \frac{r_1}{r_3},\quad \text{and} \quad |\nu_3| \lesssim \frac{r_1}{r_3}.
\end{equation}
Since $\|\nu\|=1$ and $\frac{r_1}{r_3}\leq \frac{1}{M}$,  it follows that 
$\nu_1\geq \frac{1}{2}$ for large $M$.

Indirectly, we suppose that there does not exist a unit vector $\nu=(\nu_1,\nu_2,\nu_3)\in \nu_K(G_+)$ such that 
\eqref{nu-estimate} holds, and hence 
\begin{equation}\label{5.7 h upper bound}
    h_K(\nu) > r_3^{2-3(q-p)}
\end{equation}
for any $\nu\in \nu_K(G_+)$. 

Combining the above inequalities and $|X_i|\lesssim r_i$ for $i=1,2,3$, we have 
\begin{eqnarray*}
\begin{aligned}
-\nu \cdot X \leq |\nu_1||X_1|+|\nu_2||X_2|+|\nu_3||X_3| \lesssim r_1.
\end{aligned}
\end{eqnarray*}
Since $r_2\approx r_3,$ by \eqref{q3 main equality},  we deduce
 that $r_1\approx r_3^{1-q+p}.$

Then, by  \eqref{5.7 h upper bound}, we have
\begin{eqnarray*}
\begin{aligned}
-\nu \cdot X   &\lesssim r_1\cdot r_3^{3(q-p)-2} h_K(\nu) \approx r_3^{2(q-p)-1}h_K(\nu).
\end{aligned}
\end{eqnarray*}
Thus,
\begin{equation}\label{useful inequality}
\begin{aligned}
&h_{\widetilde{K}}h_K^{-p}||Dh_K||^{q-3}dS_{\widetilde{K}} (\nu)\\
= & (h_K(\nu)-\langle \nu, X\rangle)h_K^{-p}||Dh_K||^{q-3}dS_{K} (\nu)\\
\lesssim & (r_3^{2(q-p)-1}+1)h_K^{1-p}(\nu)||Dh_K||^{q-3}dS_{K} (\nu)\\
\lesssim &r_3^{2(q-p)-1}f(\nu)d\nu.\\
\end{aligned}
\end{equation}
Let $F_+:=\nu_K(G_+)$, and hence  $\mathcal{H}^2(F_+)\lesssim (\frac{r_1}{r_3})^2$ by \eqref{F1 area}.
It follows that the density function $f$ of $\widetilde{C}_{p,q,K}$ satisfies
\begin{equation}\label{q3 case II subcase ii.1 ineq1}
\begin{aligned}
\int_{F_+}f  \leq \lambda\mathcal{H}^2(F_+) \lesssim \left(\frac{r_1}{r_3}\right)^2\approx r_3^{-2(q-p)}.
\end{aligned}
\end{equation}
On the other hand, let us estimate $V_{\widetilde{K}}(F_+)$.
For the centered ellipsoid $\widetilde{E}=E-X$,  \eqref{John n=2} implies that $\widetilde{E}\subset \widetilde{K}\subset 3\widetilde{E}$. It follows from $\widetilde{E}\subset \widetilde{K}\subset 3\widetilde{E}$ and \eqref{tildeVKZ-def2} that
\begin{equation}
\label{q3-case2-subcaseii-VtildeKF1-tildeG1}
V_{\widetilde{K}}(F_+)\geq \widetilde{V}_{\widetilde{K}}(\widetilde{G}_+)
\end{equation}
for $\widetilde{G}_+:=\left\{(x_1,x_2,x_3)\in \partial \widetilde{E}: x_1>0, \frac{1}{8}r_i<|x_i|<\frac{1}{6}r_i, i=2,3\right\}$. Now $\widetilde{E}=\Phi(B^n)$ for the diagonal transform satisfying $\Phi e_i=r_ie_i$, $i=1,2,3$, and hence
\begin{align*}
c_4=&
\widetilde{V}_{B^3}\left(\left\{(x_1,x_2,x_3)\in \partial B^n: x_1>0, \frac{1}{8}<|x_i|<\frac{1}{6}, i=2,3\right\}\right)\\
=&\widetilde{V}_{B^3}\left(\Phi^{-1}\widetilde{G}_+\right)
\end{align*}
is a positive absolute constant, and in turn \eqref{q3-case2-subcaseii-VtildeKF1-tildeG1} and the linear equivariance property \eqref{tildeVK-linear-inv} yields that
\begin{equation}
\begin{aligned}
\label{q3-case2-subcaseii-VtildeKF1}
    V_{\widetilde{K}}(F_+)&\geq \widetilde{V}_{\widetilde{K}}(\widetilde{G}_+)=\det \Phi\cdot \widetilde{V}_{\Phi^{-1} \widetilde{K}}\left(\Phi^{-1}\widetilde{G}_+\right)\\
   &\geq \det \Phi\cdot \widetilde{V}_{B^3}\left(\Phi^{-1}_*\widetilde{G}_+\right)=c_4 r_1r_2r_3.
\end{aligned}
\end{equation}

Next, it follows from \eqref{John n=2} and  the definition of $F_+, G_+$ that for any $\nu \in F_+$, we have
\begin{equation}\label{another inequality}
    h_{K}(\nu)  \leq 12 r_3 \quad \text{and} \quad \frac{1}{2}r_3\leq ||Dh_{K}||(\nu) \leq 12r_3.
\end{equation}
In turn, we deduce that
\begin{equation}\label{q3 case II subcase ii.1 ineq2}
\begin{aligned}
\int_{F_+}f &\gtrsim r_3^{1-2(q-p)}\int_{F_1}h_{\widetilde{K}}h_K^{-p}||Dh_K||^{q-3}dS_{\widetilde{K}}\\
& \gtrsim r_3^{-(q-p)-2}\int_{F_1}h_{\widetilde{K}}(\nu)dS_{\widetilde{K}}(\nu)=n r_3^{-(q-p)-2}V_{\widetilde{K}}(F_+)\\
&\gtrsim r_3^{-(q-p)-2}r_1r_2r_3\\
& \gtrsim r_3^{1-2(q-p)},\\
\end{aligned}
\end{equation}
where the first inequality follows from \eqref{useful inequality}, the second inequality follows from \eqref{another inequality}, the third inequality follows from \eqref{q3-case2-subcaseii-VtildeKF1}, and the last inequality follows from \eqref{q3 main equality}.

We deduce from Corollary~\ref{q3 estimate} that $r_3\gtrsim (\frac{r_3}{r_1})^{\frac1{q-p}}\geq M^{\frac1{q-p}}$. Since $M$ can be arbitrarily large,  \eqref{q3 case II subcase ii.1 ineq1} contradicts \eqref{q3 case II subcase ii.1 ineq2}, which in turn yields Lemma \ref{q3 case II subcase ii.1 lemma}.
\end{proof}

\begin{lemma}\label{q3 case II subcase ii.2 lemma}
If subcase (ii) of Case II holds and the $M$ in \eqref{subcaseii-CaseII-repeat} is large enough, then
there exists a unit vector $\xi=(\xi_1,\xi_2,\xi_3)\in \nu_K(G_-)$ such that 
\begin{equation}
\label{xi123-estimate}
\xi_1\leq -\frac{1}{2}, \quad |\xi_2| \lesssim \frac{r_1}{r_3}, \quad \text{and} \quad |\xi_3| \lesssim \frac{r_1}{r_3}, 
\end{equation}
and $\xi$ satisfies
\begin{equation}
\label{xi-estimate}
h_K(\xi) \leq r_3^{2-3(q-p)},
\end{equation}
where $G_-:=\left\{(x_1,x_2,x_3)\in \partial K: x_1<X_1, \frac{1}{8}\,r_i<|x_i-X_i|<\frac{1}{2}\,r_i, i=2,3\right\}$,
and the implied constants in $\lesssim$ in \eqref{xi123-estimate} depend also on the $C_1$ in \eqref{subcaseii-CaseII-repeat} besides $\lambda$, $p$, $q$. 
\end{lemma}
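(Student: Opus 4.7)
The plan is to mirror the proof of Lemma~\ref{q3 case II subcase ii.1 lemma} essentially verbatim, exploiting the reflection symmetry of the argument. The only structural difference is that $G_-$ lies on the opposite side of the John center from $G_+$ (in the $e_1$ direction), so the outward normals will point into the opposite $e_1$-halfspace, giving $\xi_1\leq -\tfrac{1}{2}$ rather than $\nu_1\geq \tfrac{1}{2}$. All the remaining estimates are essentially insensitive to this sign.

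First, I would establish the normal-component bounds \eqref{xi123-estimate}. Given $\xi=(\xi_1,\xi_2,\xi_3)\in \nu_K(x)$ for some $x=(x_1,x_2,x_3)\in G_-$, I would use convexity exactly as in the derivation of \eqref{F1 area}: I test $\xi$ against the four points $y=(y_1,x_2\pm\tfrac{r_3}{2C_1},x_3)\in K$ and $z=(\tilde y_1,x_2,x_3\pm\tfrac{r_3}{2})\in K$ (which exist because the transverse coordinates $x_2-X_2,x_3-X_3$ lie in $(\tfrac{1}{8}r_i,\tfrac{1}{2}r_i)$ and $r_3\leq C_1 r_2$), obtaining $|\xi_2|,|\xi_3|\lesssim r_1/r_3$. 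Since $\|\xi\|=1$ and $r_1/r_3\leq 1/M$, we get $|\xi_1|\geq\tfrac{1}{2}$ for $M$ large. To pin down the sign, I note that for any $x\in G_-$ and any point $y=(y_1,x_2,x_3)\in K$ with $y_1>X_1$ (such a $y$ exists by \eqref{John n=2} applied to the fiber of $K$ over $(x_2,x_3)$), convexity yields $\xi_1(y_1-x_1)\leq |\xi_2\cdot 0|+|\xi_3\cdot 0|=0$, and since $y_1-x_1>X_1-x_1>0$, we deduce $\xi_1\leq 0$, and thus $\xi_1\leq -\tfrac{1}{2}$.

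Second, I would prove \eqref{xi-estimate} by contradiction: suppose $h_K(\xi)>r_3^{2-3(q-p)}$ for every $\xi\in F_-:=\nu_K(G_-)$. The bound $-\xi\cdot X\leq |\xi_1||X_1|+|\xi_2||X_2|+|\xi_3||X_3|\lesssim r_1$ goes through with absolutely no modification (the sign of $\xi_1$ is irrelevant once absolute values are taken). Combined with $r_2\approx r_3$ and \eqref{q3 main equality} to obtain $r_1\approx r_3^{1-q+p}$, this yields $-\xi\cdot X\lesssim r_3^{2(q-p)-1}h_K(\xi)$, so that the analogue of \eqref{useful inequality} gives
\begin{equation*}
h_{\widetilde K}\,h_K^{-p}\,\|Dh_K\|^{q-3}\,dS_{\widetilde K}(\xi)\lesssim r_3^{2(q-p)-1}f(\xi)\,d\xi
\end{equation*}
on $F_-$. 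The upper bound $\int_{F_-}f\leq\lambda\mathcal{H}^2(F_-)\lesssim (r_1/r_3)^2\approx r_3^{-2(q-p)}$ follows from the normal-component bounds exactly as in \eqref{q3 case II subcase ii.1 ineq1}.

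Third, for the matching lower bound I would form $\widetilde{G}_-:=\{(x_1,x_2,x_3)\in\partial\widetilde{E}:x_1<0,\,\tfrac{1}{8}r_i<|x_i|<\tfrac{1}{6}r_i,\,i=2,3\}$, which by the reflection $x_1\mapsto -x_1$ has the same $B^3$-pullback measure as $\widetilde{G}_+$. The linear equivariance \eqref{tildeVK-linear-inv} then gives $V_{\widetilde K}(F_-)\geq \widetilde V_{\widetilde K}(\widetilde G_-)\gtrsim r_1r_2r_3$ as in \eqref{q3-case2-subcaseii-VtildeKF1}, and the chain of inequalities \eqref{q3 case II subcase ii.1 ineq2} then yields $\int_{F_-}f\gtrsim r_3^{1-2(q-p)}$. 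Since Corollary~\ref{q3 estimate} shows $r_3\gtrsim M^{1/(q-p)}$ can be made arbitrarily large, the two bounds are incompatible, contradicting the assumption and proving \eqref{xi-estimate}. The only place I anticipate having to be careful is the sign determination $\xi_1\leq -\tfrac{1}{2}$ (rather than merely $|\xi_1|\geq\tfrac{1}{2}$), which I handle by the convexity argument against a point $y\in K$ with $y_1>X_1$ above; everything else is a transcription of the preceding lemma with $G_+$, $F_+$, $\nu$ replaced by $G_-$, $F_-$, $\xi$.
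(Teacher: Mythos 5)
Your proposal is correct and is exactly the approach the paper intends: the paper's own proof of this lemma consists of the single sentence that it is similar to the proof of Lemma~\ref{q3 case II subcase ii.1 lemma} with $G_+$ replaced by $G_-$, and your write-up is a faithful transcription of that argument under the reflection $e_1\mapsto -e_1$. The one point you flesh out beyond the paper — pinning down the sign $\xi_1\leq -\tfrac{1}{2}$ rather than merely $|\xi_1|\geq\tfrac{1}{2}$ by testing $\xi$ against a point $y=(y_1,x_2,x_3)\in K$ with $y_1>X_1>x_1$ — is implicit in the paper's treatment of the $G_+$ case as well, so this is a welcome clarification rather than a deviation.
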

\begin{proof}
The proof is similar to the proof of  Lemma \ref{q3 case II subcase ii.1 lemma}, while changing $G_+$ to $G_-$.
\end{proof}

Next, we  prove that subcase (ii) in Case II does not occur.

\begin{lemma}\label{q3 case II subcase ii.3 lemma}
For given $p\in[0,1)$, $q>2+p$ and $\lambda>1$, subcase (ii) in Case II does not occur.
\end{lemma}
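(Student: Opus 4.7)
The argument is by contradiction: assume subcase~(ii) of Case~II holds for arbitrarily large $M$. By Corollary~\ref{q3 estimate} combined with $r_2\approx r_3$, we have $r_3\to\infty$ as $M\to\infty$, and by Lemma~\ref{q3 main} together with $r_2\approx r_3$ we have $r_1\approx r_3^{1-(q-p)}$; since $q>2+p$, this forces $r_3^{2-3(q-p)}=o(r_1)$ as $M\to\infty$.

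Invoke Lemmas~\ref{q3 case II subcase ii.1 lemma} and~\ref{q3 case II subcase ii.2 lemma} to extract unit vectors $\nu,\xi$ with $\nu_1\geq\tfrac12$, $\xi_1\leq-\tfrac12$, $|\nu_i|,|\xi_i|\lesssim r_1/r_3$ for $i=2,3$, and $h_K(\nu),h_K(\xi)\leq r_3^{2-3(q-p)}$. Since $X\pm r_1 e_1\in E\subset K$, the support-function inequalities
\[
\langle X,\nu\rangle+r_1\nu_1\leq h_K(\nu),\qquad \langle X,\xi\rangle-r_1\xi_1\leq h_K(\xi),
\]
summed and combined with $\nu_1-\xi_1\geq 1$, give $\langle X,\nu+\xi\rangle\leq 2r_3^{2-3(q-p)}-r_1\leq -r_1/2$ for $M$ large. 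Since $|X|\lesssim r_3$ (from the John inclusion $|X_i|\leq 3r_i$) and $|\nu+\xi|\lesssim r_1/r_3$ (the $e_1$-component being $O((r_1/r_3)^2)$ because $\nu$ and $-\xi$ are both within $O(r_1/r_3)$ of $e_1$ on $S^2$), Cauchy--Schwarz yields $|\langle X,\nu+\xi\rangle|\lesssim r_1$, of the same order as the target, so this inequality alone is not yet contradictory.

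To break the balance, I would use the remaining subcase-(ii) ingredient $\mathrm{dist}(O,\partial K')\leq r_3/M$: it produces a unit vector $\eta\in e_1^\perp$ with $h_K(\eta)\leq r_3/M$. Pairing with the John inclusions $X\pm r_i e_i\in E\subset K$ for $i=2,3$ yields $\langle X,\eta\rangle+r_i|\eta_i|\leq r_3/M$; since $\eta_2^2+\eta_3^2=1$ and $r_2\approx r_3$, at least one of $r_2|\eta_2|, r_3|\eta_3|$ is $\gtrsim r_3$, forcing $\langle X,\eta\rangle\leq -c\,r_3$ for a positive constant $c$ once $M$ is large.

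The final step would be to convert the three thin-direction facts (at $\nu$, $\xi$, $\eta$) into a contradiction by the measure-theoretic template of the preceding sub-lemmas: identify a subset $F\subset S^2$ with $\mathcal{H}^2(F)$ bounded below, propagate the pointwise bounds on $h_K$ to all of $F$ using the Lipschitz property of $h_K$ on $S^2$ (constant $\lesssim r_3$) and the ellipsoid inclusion, and use \eqref{CpqK-from-SK} to bound $\widetilde{C}_{p,q,K}(F)=o(1)$ as $M\to\infty$, clashing with the density lower bound $\widetilde{C}_{p,q,K}(F)\geq\mathcal{H}^2(F)/\lambda$ coming from \eqref{n=3-density-bounded}. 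The main obstacle is making the three rates $r_3^{2-3(q-p)}$, $r_3/M$, and $r_1$ cooperate against the gradient bound $\|Dh_K\|\lesssim r_3$ so that the integrand $h_K^{1-p}\|Dh_K\|^{q-3}$ stays uniformly small over $F$; choosing $F$ so that it lies in Lipschitz-small neighborhoods of $\nu$, $\xi$, $\eta$ simultaneously while retaining positive $\mathcal{H}^2$-measure is the delicate heart of the argument.
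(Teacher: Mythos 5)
Your proposal assembles several of the right building blocks but stops short of the proof: you explicitly say that ``choosing $F$ so that it lies in Lipschitz-small neighborhoods of $\nu$, $\xi$, $\eta$ simultaneously while retaining positive $\mathcal{H}^2$-measure is the delicate heart of the argument'' — and indeed that is not how the argument closes. The missing idea is the \emph{blow-down limit}. The paper rescales $\widetilde{K}_m = K_m/r_3(K_m)$ and passes to a Hausdorff limit $K_\infty$, which is a two-dimensional convex body in the $e_2e_3$-plane with $O\in\partial K_\infty$ (by \eqref{5.8 assumption 2}). The engine of the contradiction is a purely geometric fact about this limit: letting $\theta_{\varepsilon,\pm}$ be the angles spanned by normals over the height-$\varepsilon$ arcs $D_\varepsilon$ of $\partial K_\infty$ near $O$, one has $\theta_{\varepsilon,\pm}/\varepsilon\to\infty$ as $\varepsilon\to 0^+$ (equation \eqref{n=3-theta-over-epsilon}). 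This quantifies that $K_\infty$ has a boundary point at the origin, and it is precisely what breaks the balance you observe after your $\langle X,\nu+\xi\rangle\lesssim r_1$ computation: the lower bound $\int_{F_{\varepsilon,\pm}}f_m\geq c_0\theta_{\varepsilon,\pm}$ and the upper bound $\int_{F_{\varepsilon,\pm}}f_m\leq C_\pm\varepsilon$ become incompatible as soon as $\theta_{\varepsilon,\pm}/\varepsilon$ is large enough.

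Two further points where your sketch diverges from what actually works. First, you try to combine the $\nu$, $\xi$ and $\eta$ directions into one set $F$; the paper instead works with two caps $F_{\varepsilon,+}$, $F_{\varepsilon,-}$ lying over the two sub-arcs $D_{\varepsilon,\pm}$ of $D_\varepsilon$, and proves a dichotomy: depending on the sign of $\frac{\nu_2^{(m)}}{\nu_1^{(m)}}+\frac{\xi_2^{(m)}}{|\xi_1^{(m)}|}$, the $e_1$-extent of the preimage $\nu_{K_m}^{-1}(F_{\varepsilon,+})$ or of $\nu_{K_m}^{-1}(F_{\varepsilon,-})$ is $\lesssim\varepsilon r_1$ (the ``key claim'' \eqref{n=3-Pi3-height-positive}/\eqref{n=3-Pi3-height-negative}); it is not claimed for both simultaneously. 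Second, the passage from the thin preimage to the measure bound on $F_{\varepsilon,\pm}$ requires a uniform lower bound on $\langle\nu,-e_3\rangle$ over those caps, which is exactly what \eqref{thetaeps-atmost-thetainf} and the parametrization $T^2$ provide; the bare ``Lipschitz constant $\lesssim r_3$'' estimate you propose would not recover this projection constant. So the approach is in the right family (upper vs. lower bounds on $\widetilde C_{p,q,K}$ over small caps, fed by the thin-support inequalities from Lemmas~\ref{q3 case II subcase ii.1 lemma} and~\ref{q3 case II subcase ii.2 lemma}), but without the blow-down, the angle asymptotics, and the $\pm$ dichotomy the argument cannot close.
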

\begin{proof} 
In the argument for Lemma~\ref{q3 case II subcase ii.3 lemma}, the implied constants in $\lesssim$, $\gtrsim$ and $\approx$ depend also on the $C_1$ in \eqref{subcaseii-CaseII-repeat} besides $\lambda$, $p$, $q$.
We suppose that subcase (ii) of Case II occurs, and seek a contradiction. Then, 
there exists a sequence of $(K_m,f_m)$, $m \in \mathbb{N}$, where the convex body $K_m\in\mathcal{K}^3_o$ satisfies
$d\widetilde{C}_{p,q,K_m}=f_m\,d\mathcal{H}^2$ on $S^2$ for the measurable function
$f_m:S^2\to[\frac1{\lambda},\lambda]$, and there exists $C_1>1$ such that 
\begin{equation}
\label{5.8 assumption}
    r_3(K_m)\leq C_1 r_2(K_m) \text{ and } \lim_{m\to\infty}\frac{r_2(K_m)}{r_1(K_m)} =
\lim_{m\to\infty}\frac{r_3(K_m)}{r_1(K_m)} =\infty;
\end{equation}
moreover, the condition \eqref{CaseIIsubcase(ii)} of subcase (ii) yields that
\begin{equation}\label{5.8 assumption 2}
    \lim_{m\to\infty}\frac{\mbox{dist}(O,\partial K'_m)}{r_3(K_m)} =0
\end{equation}
 where  $K_m^\prime$ is the orthogonal projection of $K_m$ on the $e_2e_3$ plane. Possibly taking a subsequence of the``blown down" convex bodies
\begin{eqnarray*}
 \widetilde{K}_m:=\frac{1}{r_3(K_m)}\,K_m
\end{eqnarray*}
provided by the Blaschke selection theorem, we may assume by \eqref{John n=2} and \eqref{5.8 assumption} that
\begin{equation}
\label{tildeKm-to-Kinfty}
 \lim_{m\to\infty}\widetilde{K}_m= K_\infty\subset \mathbb{R}e_2\oplus \mathbb{R}e_3=\mathbb{R}^2
\end{equation}
in Hausdorff distance where $K_\infty$ is a two-dimensional compact convex set and $O\in \partial K_\infty$ by 
\eqref{5.8 assumption 2}. We write $\partial' K_\infty$ to denote the set of $x\in \partial K_\infty$ where there exists a unique tangent line in $\R^2$ at $x$ to $K_\infty$, and hence $\mathcal{H}^1(\partial K_\infty\backslash \partial' K_\infty)=0$. For $x\in \partial K_\infty$, $\nu_{K_\infty}(x)$ is the set of exterior  normal vectors $\nu\in S^1$ at $x$ to $K_\infty$.
By a rotation, we may assume that
\begin{equation}
\label{n3-Kinfty-Onormal}
-e_3\in \nu_{K_\infty}(O) \mbox{ \ and \ }t_\infty e_3\in {\rm relint}\,K_\infty\mbox{ for some $t_\infty>0$,}
\end{equation}
and hence
$$
K_\infty\subset \{(0,x_2,x_3)\in\R^3:x_3\geq 0\}.
$$
We recall the spherical coordinates for $S^2$:
$$
T^2:~(\varphi,\theta)\in \left(-\frac{\pi}{2}, \frac{\pi}{2}\right) \times (-\pi, \pi) \mapsto (\sin\varphi, \cos\varphi\sin\theta,-\cos \varphi\cos\theta)\in  S^2
$$
that actually parametrize $S^2\backslash\{(\sin\varphi,0,-\cos\varphi):\,\varphi\in[-\frac{\pi}2,\frac{\pi}2]\}$,
and for $S^1=S^2\cap\R^2$, $S^1\backslash \{e_3\}$ can be parametrized based on arc length as
$$
T^1:~\theta\in  (-\pi, \pi) \mapsto (0,\sin\theta,-\cos \theta )\in  S^1
$$
where $\theta$ is the angle of $\nu=(0,\sin\theta,-\cos \theta )$ and $-e_3$ and $T^1(\theta)=T^2(0,\theta)$.

For $\varepsilon\in(0,t_\infty)$, we consider
\begin{align*}
D_\varepsilon:=&\{(0,x_2,x_3)\in\partial K_\infty:\,x_3< \varepsilon\},\\ 
\widetilde{F}_\varepsilon:=&\{\nu\in  S^1: \nu\in\nu_{K_\infty}(x)\ \text{for some}\ x\in D_\varepsilon\}.
\end{align*}
According to \eqref{n3-Kinfty-Onormal}, there exist $s_{\varepsilon,+},s_{\varepsilon,-}>0$ such that
$$
z_{\varepsilon,+}=(0,s_{\varepsilon,+},\varepsilon)\in {\rm cl}\,D_\varepsilon\mbox{ \ and \ }
z_{\varepsilon,-}=(0,-s_{\varepsilon,-},\varepsilon)\in {\rm cl}\,D_\varepsilon
$$
are the endpoints of the open arc $D_\varepsilon$ of $\partial K_\infty$, and there exist $\theta_{\varepsilon,+},\theta_{\varepsilon,-}\in(0,\pi)$ such that
$$
T^1((-\theta_{\varepsilon,-},\theta_{\varepsilon,+}))\subset \widetilde{F}_\varepsilon\subset T^1([-\theta_{\varepsilon,-},\theta_{\varepsilon,+}]),
$$
and hence $w_{\varepsilon,+}=(0,\sin \theta_{\varepsilon,+},-\cos \theta_{\varepsilon,+})$ and
$w_{\varepsilon,-}=(0,-\sin \theta_{\varepsilon,-},-\cos \theta_{\varepsilon,-})$ are exterior unit normals at $z_{\varepsilon,+}$ and $z_{\varepsilon,-}$.  We observe that there exist $s_{0,+},s_{0,-}\geq 0$ such that $e_3^\bot\cap\partial K_\infty={\rm conv}\{z_{0,+},z_{0,-}\}$ for $z_{0,+}=s_{0,+}e_2$ and $z_{0,-}=-s_{0,-}e_2$, and hence 
\begin{equation}
\label{n=3-limit-sepsilon-sinf}
\lim_{\varepsilon\to 0^+}s_{\varepsilon,+}=s_{0,+} \mbox{ \ and \ }\lim_{\varepsilon\to 0^+}s_{\varepsilon,-}=s_{0,-}.
\end{equation}
We claim that
\begin{equation}
\label{n=3-theta-over-epsilon}
\lim_{\varepsilon\to 0^+}\frac{\theta_{\varepsilon,+}}{\varepsilon}=\lim_{\varepsilon\to 0^+}\frac{\theta_{\varepsilon,-}}{\varepsilon}=\infty.
\end{equation}
To prove \eqref{n=3-theta-over-epsilon}, we only consider $\theta_{\varepsilon,+}$, the case of $\theta_{\varepsilon,-}$ can be handled similarly. We may assume that
$\theta_{\varepsilon,+}<\frac{\pi}2$ if $\varepsilon>0$ is small,
otherwise \eqref{n=3-theta-over-epsilon} readily holds.
 Since the inequality $\langle w_{\varepsilon,+},z_{\varepsilon,+}-z_{0,+}\rangle\geq 0$ is equivalent with $s_{\varepsilon,+}-s_{0,+}\geq \varepsilon/\tan \theta_{\varepsilon,+}$ by $\theta_{\varepsilon,+}<\frac{\pi}2$, we conclude \eqref{n=3-theta-over-epsilon} by \eqref{n=3-limit-sepsilon-sinf}.

We note that as $t_\infty e_3\in {\rm relint}\,K_\infty$ (cf. \eqref{n3-Kinfty-Onormal}), there exists $\theta_\infty\in(0,\pi)$ such that if $\varepsilon\in(0,t_\infty)$, then
\begin{equation}
\label{thetaeps-atmost-thetainf}
\theta_{\varepsilon,+},\theta_{\varepsilon,-}\leq \theta_\infty.
\end{equation}

We also consider two open subarcs $D_{\varepsilon,+}$ and $D_{\varepsilon,-}$ of $D_\varepsilon$ where $D_{\varepsilon,+}$ has $z_{\varepsilon,+}$ and $z_{0,+}$ as endpoints ,  and $D_{\varepsilon,-}$ has $z_{\varepsilon,-}$ and $z_{0,-}$ as endpoints. It follows that  for
\begin{align*}
F_{\varepsilon,+}=&T^2\left(\left(-\frac{\pi}4,\frac{\pi}4\right)\times \left(\frac{\theta_{\varepsilon,+}}4,\frac{\theta_{\varepsilon,+}}2\right) \right)\\
F_{\varepsilon,-}=&T^2\left(\left(-\frac{\pi}4,\frac{\pi}4\right)\times \left(-\frac{\theta_{\varepsilon,-}}2,-\frac{\theta_{\varepsilon,-}}4\right) \right),
\end{align*}
we have
$$
\nu^{-1}_{K_\infty}\left(F_{\varepsilon,+}\right)\subset{\rm cl}\,D_{\varepsilon,+} \mbox{ \ and \ }
\nu^{-1}_{K_\infty}\left(F_{\varepsilon,-}\right)\subset {\rm cl}\,D_{\varepsilon,-},
$$
and hence \eqref{tildeKm-to-Kinfty} yields that  if $y_m\in \nu^{-1}_{\widetilde{K}_m}(F_{\varepsilon,+})$ and $z_m\in \nu^{-1}_{\widetilde{K}_m}(F_{\varepsilon,-})$, then any accumulation point of $\{y_m\}$ is contained in ${\rm cl}\,D_{\varepsilon,+}$ and any accumulation point of $\{z_m\}$ is contained in ${\rm cl}\,D_{\varepsilon,-}$. We deduce the existence of a threshold $m_\varepsilon>1$ such that if $m\geq m_\varepsilon$, then
\begin{align}
x_2\geq -\varepsilon\mbox{ and } |x_3|\leq 2\varepsilon&\mbox{ \ for }(x_1,x_2,x_3)\in \nu^{-1}_{\widetilde{K}_m}\left(F_{\varepsilon,+}\right),\\
x_2\leq \varepsilon\mbox{ and } |x_3|\leq 2\varepsilon&\mbox{ \ for }(x_1,x_2,x_3)\in \nu^{-1}_{\widetilde{K}_m}\left(F_{\varepsilon,-}\right),
\end{align}
which in turn imply that  
\begin{align}
\label{n=3-nu-inv-F+Km}
x_2\geq -\varepsilon r_3(K_m)\mbox{ and } |x_3|\leq 2\varepsilon r_3(K_m)&\mbox{ \ for }(x_1,x_2,x_3)\in \nu^{-1}_{K_m}\left(F_{\varepsilon,+}\right),\\
\label{n=3-nu-inv-F-Km}
x_2\leq \varepsilon r_3(K_m)\mbox{ and } |x_3|\leq 2\varepsilon r_3(K_m)&\mbox{ \ for }(x_1,x_2,x_3)\in \nu^{-1}_{K_m}\left(F_{\varepsilon,-}\right).
\end{align}
In addition, as $\theta_\infty/2\in(0,\frac{\pi}2)$, combining \eqref{thetaeps-atmost-thetainf} and the definition of the parametrization $T^2$ of $S^2$ shows that 
\begin{equation}
\label{Feps-+-e3}
\langle \nu,-e_3\rangle\geq \frac{\cos \frac{\theta_\infty}2}{\sqrt{2}} \mbox{ \ for \ }\nu\in F_{\varepsilon,+}\cup F_{\varepsilon,-}.
\end{equation}

\begin{figure}
    \centering
    \includegraphics[width=1\linewidth]{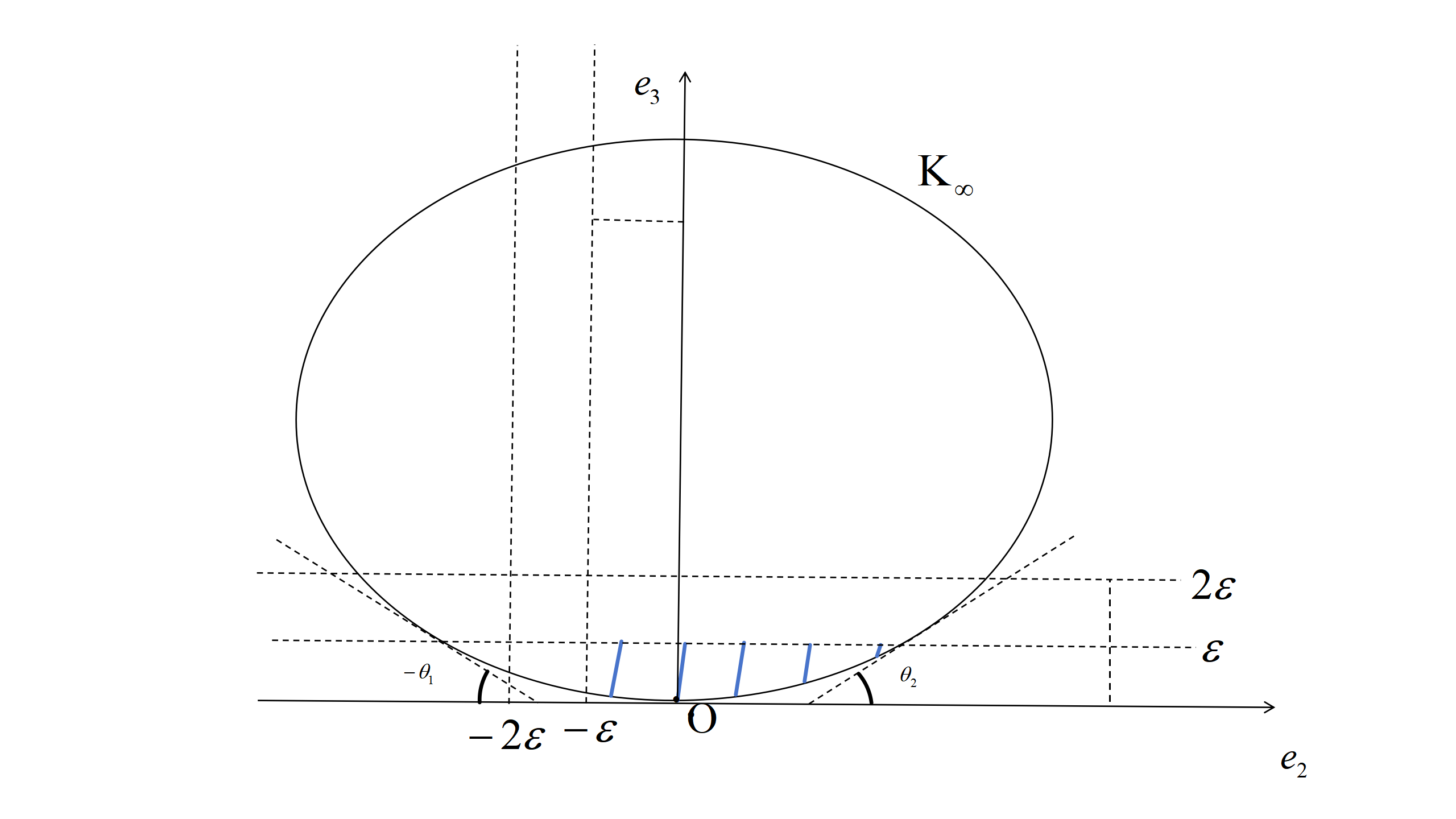}
    \caption{lemma \ref{q3 case II subcase ii.3 lemma}}
    \label{q3 case II subcase ii.3 lemma figure}
\end{figure}

 It follows from $f_m\geq \frac1\lambda$ and the definition of $F_{\varepsilon,+}$ and $F_{\varepsilon,-}$ that the density function $f_m$ of $\widetilde{C}_{p,q,K_m}$ satisfies
\begin{align}
\label{q3-angle-estimate+}
\int_{F_{\varepsilon,+}} f_{m}\,d\mathcal{H}^2\geq & c_0\theta_{\varepsilon,+}\\
\label{q3-angle-estimate-}
\int_{F_{\varepsilon,-}} f_{m} d\mathcal{H}^2\geq &c_0\theta_{\varepsilon,-}
\end{align}
where $c_0\in(0,1)$ depends on $\lambda$, and is independent of $\varepsilon$ and $m$.

Our next goal is to find some upper bound on $\int_{F_{\varepsilon,+}} f_{m} d\mathcal{H}^2$ or $\int_{F_{\varepsilon,-}} f_{m} d\mathcal{H}^2$. According to \eqref{nu-estimate} in Lemma~\ref{q3 case II subcase ii.1 lemma}
and \eqref{xi-estimate} in Lemma~\ref{q3 case II subcase ii.2 lemma}, for some
$\tilde{m}_\varepsilon\geq m_\varepsilon$, if $m>\tilde{m}_\varepsilon$, then
there exist unit vectors $\nu^{(m)}=(\nu_1^{(m)},\nu_2^{(m)},\nu_3^{(m)})\in S^2$ and $\xi^{(m)}=(\xi_1^{(m)},\xi_2^{(m)},\xi_3^{(m)})\in S^2$ such that 
\begin{equation}
\label{n=3-nu123-estimate0}
\nu_1^{(m)}\geq \frac{1}{2}, \quad |\nu_2^{(m)}| \lesssim \frac{r_1(K_m)}{r_3(K_m)}, \quad \text{and} \quad 
|\nu_3^{(m)}| \lesssim \frac{r_1(K_m)}{r_3(K_m)}, 
\end{equation}
\begin{equation}
\label{n=3-xi123-estimate0}
\xi_1^{(m)}\leq -\frac{1}{2}, \quad |\xi_2^{(m)}| \lesssim \frac{r_1(K_m)}{r_3(K_m)}, \quad \text{and} \quad |\nu_3^{(m)}| \lesssim \frac{r_1(K_m)}{r_3(K_m)}, 
\end{equation}
and $\nu^{(m)}$ and $\xi^{(m)}$ satisfy
\begin{align}
\label{n=3-nu-estimate0}
h_{K_m}(\nu^{(m)}) \leq &r_3(K_m)^{2-3(q-p)},\\
\label{n=3-xi-estimate0}
h_{K_m}(\xi^{(m)}) \leq & r_3(K_m)^{2-3(q-p)}.
\end{align}
We may also assume by \eqref{5.8 assumption} that if $m\geq \tilde{m}_\varepsilon$, then
\begin{equation}
\label{n=3-r1r3epsilon}
\frac{r_1(K_m)}{r_3(K_m)}<\varepsilon.
\end{equation}

Let $\Pi_3$ be the orthogonal projection from $\R^3$ to $e_3^\bot$, and hence $\Pi_3(x_1,x_2,x_3)=x_1e_1+x_2e_2$. Our key claim is that for $m>\tilde{m}_\varepsilon$, if $\frac{\nu_2^{(m)}}{\nu_1^{(m)}}+\frac{\xi_2^{(m)}}{|\xi_1^{(m)}|}\geq 0$, then
\begin{align}\label{n=3-Pi3-height-positive}
|x_1-y_1|\lesssim \varepsilon r_1(K_m) &\mbox{ for any } x_1e_1+z_2e_2\in \Pi_3\left(\nu_{K_m}^{-1} (F_{\varepsilon,+})\right) \\
\nonumber
 &\mbox{ and } y_1e_1+z_2e_2\in \Pi_3\left(\nu_{K_m}^{-1} (F_{\varepsilon,+})\right),\;x_1,y_1,z_2\in\R,
\end{align}
and if $\frac{\nu_2^{(m)}}{\nu_1^{(m)}}+\frac{\xi_2^{(m)}}{|\xi_1^{(m)}|}\leq 0$, then
\begin{align}\label{n=3-Pi3-height-negative}
|x_1-y_1|\lesssim \varepsilon r_1(K_m) &\mbox{ for any } x_1e_1+z_2e_2\in \Pi_3\left(\nu_{K_m}^{-1} (F_{\varepsilon,-})\right) \\
\nonumber
 &\mbox{ and } y_1e_1+z_2e_2\in \Pi_3\left(\nu_{K_m}^{-1} (F_{\varepsilon,-})\right),\;x_1,y_1,z_2\in\R.
\end{align}

First we assume that $\frac{\nu_2^{(m)}}{\nu_1^{(m)}}+\frac{\xi_2^{(m)}}{|\xi_1^{(m)}|}\geq 0$, and we may also assume that $x_1\geq y_1$ in \eqref{n=3-Pi3-height-positive}. We observe that there exist some 
$$
(x_1,z_2,x_3)\in \nu_{K_m}^{-1} (F_{\varepsilon,+}) \mbox{ \ and \ }(y_1,z_2,y_3)\in \nu_{K_m}^{-1} (F_{\varepsilon,+}),
$$ 
and  deduce from \eqref{n=3-nu-estimate0} and \eqref{n=3-xi-estimate0} that
\begin{align*}
\nu_1^{(m)}x_1+\nu_2^{(m)}z_2+\nu_3^{(m)}x_3\leq &h_{K_m}(\nu^{(m)}) \leq r_3(K_m)^{2-3(q-p)},\\
\xi_1^{(m)}y_1+\xi_2^{(m)}z_2+\xi_3^{(m)}y_3\leq &h_{K_m}(\xi^{(m)}) \leq r_3(K_m)^{2-3(q-p)}.
\end{align*}
Since $\nu_1^{(m)}\geq \frac{1}{2}$ by \eqref{n=3-nu123-estimate0} and $\xi_1^{(m)}\leq- \frac{1}{2}$ by \eqref{n=3-xi123-estimate0}, it follows that
\begin{align}
\nonumber
x_1-y_1\leq & r_3(K_m)^{2-3(q-p)}\left(\frac{1}{\nu_1^{(m)}}+\frac{1}{\left|\xi_1^{(m)}\right|}\right)-\\
\label{n=3-z2-place-to-estimate}
&-z_2\cdot \left(\frac{\nu_2^{(m)}}{\nu_1^{(m)}}+\frac{\xi_2^{(m)}}{\left|\xi_1^{(m)}\right|}\right)
-x_3\cdot \frac{\nu_3^{(m)}}{\nu_1^{(m)}}-y_3\cdot \frac{\xi_3^{(m)}}{\left|\xi_1^{(m)}\right|}.
\end{align}
Here $-z_2\leq \varepsilon r_3(K_m)$  and $|x_3|,|y_3|\leq 2\varepsilon r_3(K_m)$ in \eqref{n=3-z2-place-to-estimate} by \eqref{n=3-nu-inv-F+Km}, thus
$\frac{\nu_2^{(m)}}{\nu_1^{(m)}}+\frac{\xi_2^{(m)}}{|\xi_1^{(m)}|}\geq 0$, \eqref{n=3-nu123-estimate0} and \eqref{n=3-xi123-estimate0} yield that
$$
x_1-y_1\lesssim r_3(K_m)^{2-3(q-p)}+\varepsilon r_1(K_m).
$$
Now first we apply the estimate $r_3(K_m)^{-(q-p)} \lesssim \frac{r_1(K_m)}{r_3(K_m)}$ of Corollary~\ref{q3 estimate}, then \eqref{n=3-r1r3epsilon}, and finally $q>p+2$ and the estimate $r_3(K_m)\gtrsim 1$ of Corollary~\ref{q3 estimate} to deduce that
\begin{align*}
r_3(K_m)^{2-3(q-p)}\lesssim& r_3(K_m)^{2-q+p}\left(\frac{r_1(K_m)}{r_3(K_m)}\right)^2\\
\leq & r_3(K_m)^{1-q+p}\cdot \varepsilon r_1(K_m)\lesssim \varepsilon r_1(K_m),
\end{align*}
which in turn yields the claim \eqref{n=3-Pi3-height-positive} if $\frac{\nu_2^{(m)}}{\nu_1^{(m)}}+\frac{\xi_2^{(m)}}{|\xi_1^{(m)}|}\geq 0$.

On the other hand, if $\frac{\nu_2^{(m)}}{\nu_1^{(m)}}+\frac{\xi_2^{(m)}}{|\xi_1^{(m)}|}\leq 0$, then the argument is similar, only we use that $z_2\leq \varepsilon r_3(K_m)$  and $|x_3|,|y_3|\leq 2\varepsilon r_3(K_m)$ in \eqref{n=3-z2-place-to-estimate} by \eqref{n=3-nu-inv-F-Km} as $(x_1,z_2,x_3)\in \nu_{K_m}^{-1} (F_{\varepsilon,-})$, completing the proof of the claim also in this case.

As $h_{K_m}(e_2)\leq 6r_2(K_m)$ and $h_{K_m}(-e_2)\leq 6r_2(K_m)$, the claim and Fubini's theorem imply that if $m\geq \tilde{m}_\varepsilon$, then
\begin{align*}
\mathcal{H}^2\left(\Pi_3\left(\nu_{K_m}^{-1} (F_{\varepsilon,+})\right)\right)\lesssim &\varepsilon r_1(K_m)r_2(K_m)
\mbox{ \ or}\\ 
\mathcal{H}^2\left(\Pi_3\left(\nu_{K_m}^{-1} (F_{\varepsilon,-})\right)\right)\lesssim &\varepsilon r_1(K_m)r_2(K_m).
\end{align*}
As $\langle \nu_{K_m}(x),-e_3\rangle\geq \frac{\cos \frac{\theta_\infty}2}{\sqrt{2}}$ for any
$x\in \partial'K_m\cap \nu_{K_m}^{-1} (F_{\varepsilon,+})$ by \eqref{Feps-+-e3}, we conclude that
if $m\geq \tilde{m}_\varepsilon$, then
\begin{align}
\label{n=3-H2-nu-F+}
\mathcal{H}^2\left(\nu_{K_m}^{-1} (F_{\varepsilon,+})\right)\lesssim &\frac{\varepsilon}{\cos \frac{\theta_\infty}2}\cdot r_1(K_m)r_2(K_m)
\mbox{ \  or }\\ 
\label{n=3-H2-nu-F-}
\mathcal{H}^2\left(\nu_{K_m}^{-1} (F_{\varepsilon,-})\right)\lesssim &\frac{\varepsilon}{\cos \frac{\theta_\infty}2}\cdot r_1(K_m)r_2(K_m).
\end{align} 
If \eqref{n=3-H2-nu-F+} holds, then we deduce first using \eqref{CpqK-from-SK}, then $h_{K_m}(\nu)\leq  \|Dh_{K_m}(\nu)\|$ for $\nu\in S^2$ by \eqref{DhK-rhoK} and  $p<1$, next  $\|Dh_{K_m}\|\leq 6r_3$, $q>2+p$ and  \eqref{n=3-H2-nu-F+}, and finally the formula $r_1r_2 \approx r_3^{2-q+p}$ of Lemma~\ref{q3 main}
 to conclude that
\begin{align}
\nonumber
\int_{F_{\varepsilon,+}} f_{m}\,d\mathcal{H}^2=&\int_{F_{\varepsilon,+}} h_{K_m}^{1-p}\|Dh_{K_m}\|^{q-3}\,dS_{K_m}\leq \int_{F_{\varepsilon,+}}\|Dh_{K_m}\|^{q-p-2}\,dS_{K_m}\\
\label{n=3-intfm-upper-F+}
\lesssim &r_3(K_m)^{q-p-2}
\frac{\varepsilon}{\cos \frac{\theta_\infty}2}\cdot r_1(K_m)r_2(K_m)\leq C_+\varepsilon
\end{align}
where $C_+>1$ depends on $p$, $q$, $\lambda$, $\theta_\infty$ and the $C_1$ of \eqref{5.8 assumption}, and hence is independent of $m$ and $\varepsilon$.
Similarly, if \eqref{n=3-H2-nu-F-} holds, then we deduce the existence of $C_->1$ depending on $p$, $q$, $\lambda$, $\theta_\infty$ and the $C_1$ of \eqref{5.8 assumption}, and hence is independent of $m$ and $\varepsilon$, such that
\begin{equation}
\label{n=3-intfm-upper-F-}
\int_{F_{\varepsilon,-}} f_{m}\,d\mathcal{H}^2\leq C_-\varepsilon.
\end{equation}
Therefore,  if $m\geq \tilde{m}_\varepsilon$, then either \eqref{n=3-intfm-upper-F+} or \eqref{n=3-intfm-upper-F-} holds where $C_+$ and $C_-$ are independent of $m$ and $\varepsilon$. 

Finally, it follows from \eqref{n=3-theta-over-epsilon} that we can choose a positive $\varepsilon<t_\infty$ such that
$$
\min\left\{\frac{\theta_{\varepsilon,+}}{\varepsilon},\frac{\theta_{\varepsilon,-}}{\varepsilon}\right\}
>\max\left\{\frac{C_+}{c_0},\frac{C_-}{c_0}\right\}
$$
for the constants $c_0$ of \eqref{q3-angle-estimate+} and \eqref{q3-angle-estimate-}, $C_+$ of \eqref{n=3-intfm-upper-F+} and $C_-$ of \eqref{n=3-intfm-upper-F-}. Then considering an $m\geq \tilde{m}_\varepsilon$, either \eqref{n=3-intfm-upper-F+} contradicts \eqref{q3-angle-estimate+}, or
\eqref{n=3-intfm-upper-F-} contradicts \eqref{q3-angle-estimate-}. This contradiction
 completes the  proof of Lemma~\ref{q3 case II subcase ii.3 lemma}; namely, subcase (ii) of Case II does not occur.
\end{proof}

\subsection{Completing the argument for  Proposition~\ref{good-inequality-prop} and Theorem~\ref{n3-ppos--C0-estimate}}

\begin{proof}[Proof of Proposition~\ref{good-inequality-prop}, and in turn of Theorem~\ref{n3-ppos--C0-estimate}]
Combining Lemma~\ref{q3 case I subcase i lemma}, Lemma~\ref{q3 case I subcase ii.1 lemma}, Lemma~\ref{q3 case I subcaseii.2 lemma} (in Case I) and Lemma~\ref{q3 case II subcase i lemma} and Lemma~\ref{q3 case II subcase ii.3 lemma} (in Case II), we deduce that neither Case I nor Case II occurs. Thus, we obtain \eqref{good inequality} in Proposition~\ref{good-inequality-prop}. 
 
Now $r_1(K)\lesssim 1$ according to Corollary~\ref{q3 estimate}, and hence $r_3(K)\lesssim r_1(K)$ in Proposition~\ref{good-inequality-prop} yields ${\rm diam}\,K\leq 3r_3(K)\lesssim 1$. Also it is again Corollary~\ref{q3 estimate} says that $r_3(K)\gtrsim 1$, and hence $r_1(K)\gtrsim r_3(K)$ in Proposition~\ref{good-inequality-prop} implies $r_1(K)\gtrsim 1$. We deduce that $|K|\geq \frac{4\pi}3\,r_1(K)^3\gtrsim 1$, completing the proof of 
Theorem~\ref{n3-ppos--C0-estimate}.
\end{proof}

\section*{Acknowledgment}
\addcontentsline{toc}{section}{Acknowledgment}
Research of B\"or\"oczky was supported by Hungarian National Research and Innovation Office grant ADVANCED\_24 150613, and research of Chen was supported by National Key $R\&D$ program of China 2022YFA1005400, National Science Fund for Distinguished Young Scholars (No. 12225111).

\end{document}